\numberwithin{equation}{section}
\makeatletter \setlength{\textwidth}{15.0cm}
\newtheorem{theorem}{Theorem}[section]
\newtheorem{lemma}{Lemma}[section]
\newtheorem{example}{Example}[section]
\newtheorem{remark}{Remark}[section]
\begin{document}

\date{}
\author{Caiyu Jiao,\;Changpin Li\thanks{\,Corresponding author.}
\\
\small \textit{ Department of Mathematics, Shanghai University, Shanghai 200444, China
%$^2$ School of Mathematical Sciences, University of Nottingham, Nottingham, UK
}
}
\title{Monte Carlo method for parabolic equations involving fractional Laplacian\thanks{The work was partially supported by the
National Natural Science Foundation of China under Grant no. 11926319.}}
\maketitle
\hrulefill

\begin{abstract}
 {We apply the Monte Carlo method to solving the Dirichlet problem of linear parabolic equations with fractional
 Laplacian. This method exploits the idea of weak approximation  of related stochastic differential equations
 driven by the symmetric stable L\'{e}vy  process with jumps. We utilize the jump-adapted scheme to approximate
 L\'{e}vy process  which gives exact exit time to the boundary. When the solution has low regularity, we establish
 a numerical  scheme by removing the small jumps of the L\'{e}vy process and then show the convergence order. When
 the solution has higher regularity, we build up a higher-order numerical scheme by replacing small jumps with a
 simple process and then display the higher convergence order. Finally, numerical experiments including
 ten- and one hundred-dimensional cases are presented, which confirm the theoretical estimates and show the numerical
 efficiency of the proposed schemes for high dimensional parabolic equations.
 }
%\\
%\textbf{Mathematics Subject Classification:} 26A33, 65M60, 65N30
\\
\textbf{Key words: Monte Carlo method; fractional Laplacian; linear parabolic equation; L\'{e}vy process;
 Jump-adapted scheme}
\end{abstract}
\hrulefill

\section{Introduction}

The fractional Laplacian, $(-\Delta)^{s}$, is a prototypical operator for modeling
nonlocal and anomalous phenomenon which incorporates long range interactions
\cite{Antil&Harlim2021,Guo2013,Hu2022,Li&Cai2019,Metzler&Klafter2000,Sheng&Wang2021,Zhai&Zheng2019}.
It arises in many areas of applications,
including models for turbulent flows, porous media flows,
pollutant transport, quantum mechanics, stochastic dynamics, finance and so on
\cite{Das2011,Herrmann2011,Hilfer2000,Jin&Yan2019,Oldham&Spanier1974}.
Almost known deterministic numerical methods have been proposed for approximating
solutions of parabolic problems with fractional Laplacian in low dimension
(less than 3 dimension)\cite{Aceto&Novati2017,Alzahrani&Khaliq2019},
while seldom probabilistic approach is taken
into account to numerically solve such parabolic and steady state problems.

  The aim of this article is to develop a Monte Carlo method for solving
the terminal-boundary value problem in high dimensional cases in the following form
\begin{equation}\label{eq:terminalboundaryproblem}
 \left\{
 \begin{aligned}
  &\frac{\partial u}{\partial t}-(-\Delta)^su+b(t,\texttt{{\rm{x}}})
  \cdot{\nabla u}+c(t,\texttt{{\rm{x}}})u+
  f(t,\texttt{{\rm{x}}})=0, \quad (t,\texttt{{\rm{x}}})\in[0,T)\times \mathbb{D},
  \\[3pt]
  &u(T,\texttt{{\rm{x}}})=g(\texttt{{\rm{x}}}), \quad \texttt{{\rm{x}}}\in {\mathbb{D}},
  \\[3pt]
  &u(t,\texttt{{\rm{x}}})=\chi(t,\texttt{{\rm{x}}}), \quad (t,\texttt{{\rm{x}}})\in[0,T]
  \times (\mathbb{R}^{n}\setminus\mathbb{D}),
 \end{aligned}
 \right.
\end{equation}
where $s\in(0,1)$, $T>0$, $b(t,\texttt{{\rm{x}}})=(b_{1}(t,\texttt{{\rm{x}}}),\,
b_{2}(t,\texttt{{\rm{x}}}),\,\cdots,\,b_{n}(t,\texttt{{\rm{x}}}))$ is an $n$-dimensional vector,
$\mathbb{D}$ is a bounded region in $\mathbb{R}^{n}\,(n\geq3)$
and the fractional Laplacian is defined by a singular integral which coincides
with Riesz derivative on the whole space \cite{Cai&Li2019,Cai&Li2019a,Jiao2021,Li-Li2022,Sabelfeld2008,Samko&Kilbas1993},
\begin{equation}\label{eq:1pvfractionLaplacian}
 (-\Delta)^{s}u(t,\texttt{{\rm{x}}})=C(n,s)\,
 {\rm P.V.}\int_{\mathbb{R}^{n}}\frac{u(t,\texttt{{\rm{x}}})
 -u(t,\texttt{{\rm{y}}})}{|\texttt{{\rm{x}}}
 -\texttt{{\rm{y}}}|^{n+2s}}
 {\rm d}\texttt{{\rm{y}}}.
 \end{equation}
 Here ${\rm P.V.}$ stands for the principle value and the constant
 $C(n,s)$ is given by \cite{Cai&Li2019},
 \begin{equation}\label{eq:coeffient}
 C(n,s)=\left(\,\,\int_{\mathbb{R}^{n}}\frac{1-\cos\zeta_{1}}
 {|\zeta|^{n+2s}}{\rm d}\zeta\right)^{-1}
 =\frac{{s}2^{2s}\Gamma(\frac{n}{2}+s)}{{\pi}^{\frac{n}{2}}\Gamma(1-s)}
 \end{equation}
 with $\zeta_{1}$ being the first component of
 $\zeta=(\zeta_{1},\zeta_{2},\ldots,\zeta_{n})\in\mathbb{R}^{n}$
 and $\Gamma$ representing the Gamma function.

If we let $v(t,\texttt{{\rm{x}}})=u(T-t,\texttt{{\rm{x}}})$,
then equation (\ref{eq:terminalboundaryproblem})
is changed to the initial-boundary value problem
\begin{equation}\label{eq:initalboundaryproblem}
 \left\{
 \begin{aligned}
  &\frac{\partial v}{\partial t}+(-\Delta)^sv+\overline{b}(t,\texttt{{\rm{x}}})\cdot
  \nabla v+\overline{c}(t,\texttt{{\rm{x}}})v+
  \overline{f}(t,\texttt{{\rm{x}}})=0, \quad (t,\texttt{{\rm{x}}})\in(0,T]\times \mathbb{D},
  \\[3pt]
  &v(0,\texttt{{\rm{x}}})=g(\texttt{{\rm{x}}}), \quad \texttt{{\rm{x}}}\in {\mathbb{D}},
  \\[3pt]
  &v(t,\texttt{{\rm{x}}})=\overline{\chi}(t,\texttt{{\rm{x}}}), \quad (t,\texttt{{\rm{x}}})\in[0,T]
  \times (\mathbb{R}^{n}\setminus\mathbb{D}),
 \end{aligned}
 \right.
\end{equation}
where $\overline{b}(t,\texttt{{\rm{x}}})=-b(T-t,\texttt{{\rm{x}}})$,
$\overline{c}(t,\texttt{{\rm{x}}})=-c(T-t,\texttt{{\rm{x}}})$,
$\overline{f}(t,\texttt{{\rm{x}}})=-f(T-t,\texttt{{\rm{x}}})$, and
$\overline{\chi}(t,\texttt{{\rm{x}}})=\chi(T-t,\texttt{{\rm{x}}})$.

Probabilistic numerical methods (usually implemented with Monte Carlo method) provide
effective approaches for numerically solving partial differential equation in
high dimension with/without fractional Laplacian as probabilistic methods
do not require any discretization in space \cite{Jiao2022,Sabelfeld1991,Sabelfeld2008,Sabelfeld2017,Sabelfeld2019,Sabelfeld-Shalimova2013,Sabelfeld-Simonov2016}.

 Classical Laplace operator and fractional Laplacian are the infinitesimal
generators of Brownian motion and symmetric $2s$-stable process, respectively,
which connect partial differential equations with stochastic processes.
Muller \cite{Muller1956} first proposed walk-on-spheres method by simulating
the paths of the Brownian motion in spheres to numerically solve the steady-state
equation with classical Laplace operator. Kyprianou et al.
\cite{Kyprianou&Osojnik2018} utilized walk-on-spheres
method to approximate solution of the steady-state equation
with fractional Laplacian based on the distribution of symmetric $2s$-stable process
issued from the origin, when it first exits a unit sphere. However, the walk-on-spheres
method generally can not generate the first exit time from the domain
$\mathbb{D}$. Therefore, it is difficult for walk-on-spheres method to solve parabolic
problems numerically. Milstein and Tretyakov \cite{Milstein&Tretyakov2001,Milstein&Tretyakov2007}
approximated the trajectory of diffusion process with Brownian motion by
Euler scheme which is a uniformly-time discretization
scheme to solve classical parabolic problems with integer order derivative.
\cite{Deligiannidis2021} gave a random walk algorithm for the Dirichlet
problem for parabolic integro-differential equation where the kernel of
the integro-differential operator has better regularities
than the kernel of fractional Laplacian.

In this article, we can approximate the trajectory of the system of stochastic
differential equations with symmetric $2s$-stable process to numerically
solve fractional parabolic problems (\ref{eq:terminalboundaryproblem}).
However, simulating this system via Euler scheme suffers from two
difficulties: First, the jump intensity of the symmetric $2s$-stable process
is infinite, which means there is an infinite number of jumps in every
interval of nonzero length; Second, the exit time of the process leaving
the domain $\mathbb{D}$ is hard to obtain. To overcome the first difficulty,
we utilize the idea of Asmussen and Rosi\'{n}ski \cite{Asmussen&Rosinski2001}
and remove small jumps or replace small jumps with corresponding simple processes.
Jump-adapted scheme \cite{Deligiannidis2021,Higa&Tankov2010,Mikulevicius2012} is
adapted to go through the second difficulty.
Compared with classical Euler scheme, jump-adapted scheme uses adaptive
non-uniform discretization based on the times of jumps of the driving process.

In this paper, we first give the probabilistic representation of the solution to equation
(\ref{eq:terminalboundaryproblem}), which is deeply connected with the system
of stochastic differential equations with symmetric $2s$-stable process. We then consider
a simple jump-adapted Euler scheme to approximate the trajectory of the stochastic
process and obtain the numerical solution to equation (\ref{eq:terminalboundaryproblem}).
Furthermore, we propose a high-order jump-adapted scheme by replacing small jumps with
simple process if the regularity of the solution $u(t,\texttt{{\rm{x}}})$ is good enough.
In addition, we give the weak convergence of the simulated L\'{e}vy process and
the error estimate , which is related to the jumping intensity
and statistical error. In comparison with \cite{Deligiannidis2021},
we study the Dirichlet problem for the parabolic problem with fractional
Laplacian where the kernel is more singular and the related symmetric 2s-stable
process do not exist any moments. Based on the different regularities of the solution
$u(t,\texttt{{\rm{x}}})$, we give the corresponding numerical schemes.
For the optimal error estimate of the numerical scheme, we require
$u\in C^{1,3}([0,T]\times \mathbb{R}^{n})$, while \cite{Deligiannidis2021}
requires a solution of the auxiliary Dirichlet problem belonging to
$C^{2,4}([0,T]\times \mathbb{R}^{n})$.

The rest of the paper is organized as follows. In Section 2, the probabilistic
representation of the solution to equation (\ref{eq:terminalboundaryproblem})
is given, which is related to the system of stochastic differential equations
with $2s$-stable process. In Section 3, A simple jump-adapted Euler
scheme is derived. A high-order jump-adapted scheme is proposed in Section 4.
The weak convergence of the simulated process and error estimates
are presented in the corresponding sections. In Section 5, numerical
experiments are performed to verify the theoretical analysis. Finally,
we summarize the main work in the last section.

In the following sections, we denote positive constants by $C_{1}$ and $C_{2}$
which may be dependent of the index $s$, but not necessarily
the same at different situations.

\section{Probabilistic representation}
Let $(\Omega,\mathcal{F},\mathrm{P})$ be a complete probability space with a
filtration $F=\{\mathcal{F}_{t}\}_{t\in[0,T]}$ of $\sigma$-algebra satisfying
the usual conditions. Consider the symmetric $2s$-stable process \cite{Applebaum2009},
\begin{equation}\label{levyprocess}
 {\rm d}L_{\eta}=\int_{|\texttt{{\rm{y}}}|<1}\texttt{{\rm{y}}}
  \widetilde{N}({\rm d}\eta,{\rm d}\texttt{{\rm{y}}})+\int_{|\texttt{{\rm{y}}}|\geq1}
  \texttt{{\rm{y}}} N({\rm d}\eta,{\rm d}\texttt{{\rm{y}}}),
\end{equation}
where $N({\rm d}\eta,{\rm d}\texttt{{\rm{y}}})$ is a Poisson random measure
on $[0,T]\times \mathbb{R}_{0}^{n}$, ($\mathbb{R}_{0}^{n}=\mathbb{R}^{n}\setminus\{0\}$)
with
\begin{equation}{\label{eq:PossionRandomMeasure}}
 \mathbf{E}[N({\rm d}\eta,{\rm d}\texttt{{\rm{y}}})]=\nu({\rm d}\texttt{{\rm{y}}})
 {\rm d}\eta=C(n,s)\frac{{\rm d}\texttt{{\rm{y}}}}{|\texttt{{\rm{y}}}|^{n+2s}}{\rm d}\eta
\end{equation}
and
\begin{equation}
 \widetilde{N}({\rm d}\eta,{\rm d}\texttt{{\rm{y}}})=N({\rm d}\eta,{\rm d}\texttt{{\rm{y}}})
 -\nu({\rm d}\texttt{{\rm{y}}}){\rm d}\eta
\end{equation}
is the compensated Poisson random measure. The characteristic function is given
by \cite{Sato1999},
\begin{equation}
 \begin{aligned}
  \mathbf{E}e^{i(\xi,L_{t})}
  &=\exp\left[t\int_{\mathbb{R}^{n}}\left(e^{i(\xi,\texttt{{\rm{y}}})}
  -1-i(\xi,\texttt{{\rm{y}}})\right)\nu({\rm d}\texttt{{\rm{y}}})\right]
  \\
  &=\exp\left[tC(n,s)\int_{\mathbb{R}^{n}}\frac{\cos{(\xi,\texttt{{\rm{y}}})}-1}
  {|\texttt{{\rm{y}}}|^{n+2s}}{\rm d}\texttt{{\rm{y}}}\right]
  \\
  &=e^{-t|\xi|^{2s}}.
 \end{aligned}
\end{equation}
It can be easily got that the infinitesimal generator of $L_{t}$
is $-(-\Delta)^{s}$ \cite{Applebaum2009} . Thus, fractional Laplacian
is closely related to the symmetric $2s$-stable process.

Consider the following system of stochastic differential equations,
\begin{equation}\label{SDEs}
 \left\{
 \begin{aligned}
  &{\rm d} \texttt{{\rm{X}}}_{\eta}=b(\eta,\texttt{{\rm{X}}}_{\eta-}){\rm d}\eta
  +{\rm d}L_{\eta},
  \qquad &&\texttt{{\rm{X}}}_{t}=\texttt{{\rm{x}}},
  \\[3pt]
  &{\rm d}Y_{\eta}=c(\eta,\texttt{{\rm{X}}}_{\eta-})Y_{\eta}{\rm d}\eta, \qquad &&Y_{t}=1,
  \\[3pt]
  &{\rm d}Z_{\eta}=f(\eta,\texttt{{\rm{X}}}_{\eta-})Y_{\eta}{\rm d}\eta,  \qquad &&Z_{t}=0.
 \end{aligned}
 \right.
\end{equation}
Based on the above system, we give the probabilistic representation
of the solution to equation (\ref{eq:terminalboundaryproblem}) in the following theorem.

\begin{theorem}
 Let $u(t,\texttt{{\rm{x}}})$ be the solution of equation
 {\rm(\ref{eq:terminalboundaryproblem})}. Then $u(t,\texttt{{\rm{x}}})$ can be given by
 \begin{equation}
  \begin{aligned}
   u(t,\texttt{{\rm{x}}})
   =&\,\mathbf{E}\left[
   u(T\wedge\tau_{t,\texttt{{\rm{x}}}},\texttt{{\rm{X}}}_{T\wedge\tau_{t,\texttt{{\rm{x}}}}}
   ^{t,\texttt{{\rm{x}}}})  Y_{T\wedge\tau_{t,\texttt{{\rm{x}}}}}
   ^{t,\texttt{{\rm{x}}},1}+Z_{T\wedge\tau_{t,\texttt{{\rm{x}}}}}
   ^{t,\texttt{{\rm{x}}},1,0}\right]
   \\[3pt]
   =&\,\mathbf{E}\left\{I_{\{\tau_{t,\texttt{{\rm{x}}}}<T\}}
   \left[\chi(\tau_{t,\texttt{{\rm{x}}}},\texttt{{\rm{X}}}_{\tau_{t,\texttt{{\rm{x}}}}}
   ^{t,\texttt{{\rm{x}}}})  Y_{\tau_{t,\texttt{{\rm{x}}}}}
   ^{t,\texttt{{\rm{x}}},1}+Z_{\tau_{t,\texttt{{\rm{x}}}}}
   ^{t,\texttt{{\rm{x}}},1,0}\right]\right\}
   \\[3pt]
   &+\mathbf{E}\Big\{I_{\{\tau_{t,\texttt{{\rm{x}}}}\geq T\}}\Big[
   g(\texttt{{\rm{X}}}_{T}^{t,\texttt{{\rm{x}}}})Y_{T}^{t,\texttt{{\rm{x}}},1}+Z_{T}
   ^{t,\texttt{{\rm{x}}},1,0}\Big]\Big\},
  \end{aligned}
 \end{equation}
 where $\texttt{{\rm{X}}}_{\eta}^{t,\texttt{{\rm{x}}}}$, $Y_{\eta}^{t,\texttt{{\rm{x}}},y}$
and $Z_{\eta}^{t,\texttt{{\rm{x}}},y,z}$ $(t \leq \eta\leq T)$
are the solution of Cauchy problem {\rm(\ref{SDEs})},
$\tau_{t,\texttt{{\rm{x}}}}=\{\eta\geq t, \texttt{{\rm{X}}}_{\eta}^{t,\texttt{{\rm{x}}}}\notin
\mathbb{D} \}$ is the first exit time of $\texttt{{\rm{X}}}_{\eta}^{t,\texttt{{\rm{x}}}}$
starting from $\texttt{{\rm{x}}}\in{\mathbb{R}^{n}}$ to the boundary $\mathbb{D}$.
\end{theorem}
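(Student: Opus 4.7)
The plan is to derive this representation by applying the It\^o formula for L\'evy-driven jump processes to the functional $u(\eta,\texttt{{\rm{X}}}_{\eta}^{t,\texttt{{\rm{x}}}})Y_{\eta}^{t,\texttt{{\rm{x}}},1}+Z_{\eta}^{t,\texttt{{\rm{x}}},1,0}$ on the interval $[t,T\wedge\tau_{t,\texttt{{\rm{x}}}}]$, and then to use the PDE in (\ref{eq:terminalboundaryproblem}) to kill the drift term, leaving only the martingale (zero-mean) contribution and the boundary/terminal contribution at the stopping time.

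First, I would fix $(t,\texttt{{\rm{x}}})$ and consider the three-component process in (\ref{SDEs}). Using the product rule together with the It\^o formula for semimartingales with jumps applied to $\Phi(\eta,\texttt{{\rm{X}}},Y,Z):=u(\eta,\texttt{{\rm{X}}})Y+Z$, the increment from $t$ to $s\in[t,T\wedge\tau_{t,\texttt{{\rm{x}}}}]$ decomposes into (i) a ${\rm d}\eta$ part containing $\partial_{\eta}u$, $b\cdot\nabla u$, $cu$, $f$, and the non-local drift coming from the compensator of the small jumps together with the large-jump integral, (ii) a compensated Poisson integral, and (iii) jump terms that are absorbed by the compensator. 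A direct computation using $\mathbf{E}[N({\rm d}\eta,{\rm d}\texttt{{\rm{y}}})]=\nu({\rm d}\texttt{{\rm{y}}}){\rm d}\eta$ and the identity
\begin{equation*}
-(-\Delta)^{s}u(\eta,\texttt{{\rm{x}}})
 =\int_{\mathbb{R}^{n}}\bigl[u(\eta,\texttt{{\rm{x}}}+\texttt{{\rm{y}}})-u(\eta,\texttt{{\rm{x}}})-\mathbf{1}_{\{|\texttt{{\rm{y}}}|<1\}}\,\texttt{{\rm{y}}}\cdot\nabla u(\eta,\texttt{{\rm{x}}})\bigr]\nu({\rm d}\texttt{{\rm{y}}}),
\end{equation*}
shows that the $\mathrm{d}\eta$-coefficient equals $Y_{\eta}\bigl[\partial_{\eta}u-(-\Delta)^{s}u+b\cdot\nabla u+cu+f\bigr](\eta,\texttt{{\rm{X}}}_{\eta-})$, which vanishes by (\ref{eq:terminalboundaryproblem}).

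Second, taking expectations, the compensated Poisson integral is a (local) martingale; a standard localization by a sequence of stopping times $\tau_{k}\uparrow T\wedge\tau_{t,\texttt{{\rm{x}}}}$, together with the bound $|Y_{\eta}|\le e^{\|c\|_{\infty}T}$ and the boundedness/regularity of $u$ on the relevant domain, yields that this martingale has zero expectation and is uniformly integrable, so we may pass to the limit. This gives
\begin{equation*}
 u(t,\texttt{{\rm{x}}})=\mathbf{E}\bigl[u(T\wedge\tau_{t,\texttt{{\rm{x}}}},\texttt{{\rm{X}}}_{T\wedge\tau_{t,\texttt{{\rm{x}}}}}^{t,\texttt{{\rm{x}}}})Y_{T\wedge\tau_{t,\texttt{{\rm{x}}}}}^{t,\texttt{{\rm{x}}},1}+Z_{T\wedge\tau_{t,\texttt{{\rm{x}}}}}^{t,\texttt{{\rm{x}}},1,0}\bigr],
\end{equation*}
which is the first line of the asserted identity.

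Third, I would split on $\{\tau_{t,\texttt{{\rm{x}}}}<T\}$ versus $\{\tau_{t,\texttt{{\rm{x}}}}\ge T\}$. On the former event, $T\wedge\tau_{t,\texttt{{\rm{x}}}}=\tau_{t,\texttt{{\rm{x}}}}$ and $\texttt{{\rm{X}}}_{\tau_{t,\texttt{{\rm{x}}}}}^{t,\texttt{{\rm{x}}}}\notin\mathbb{D}$, so the exterior condition $u(\cdot,\cdot)=\chi(\cdot,\cdot)$ on $[0,T]\times(\mathbb{R}^{n}\setminus\mathbb{D})$ replaces $u$ by $\chi$; on the latter event the terminal condition $u(T,\cdot)=g(\cdot)$ replaces $u$ by $g$. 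Combining the two cases produces the second form of the representation.

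The main obstacle will be the justification of It\^o's formula and the vanishing of the martingale part for the $2s$-stable driver. Because $L_{\eta}$ has infinite jump intensity and no finite moments, care is required: one must split the jumps at $|\texttt{{\rm{y}}}|<1$ versus $|\texttt{{\rm{y}}}|\ge 1$ exactly as in (\ref{levyprocess}), control the small-jump compensator via a second-order Taylor expansion of $u$ (so sufficient smoothness of $u$ in $\texttt{{\rm{x}}}$ is needed), and handle the large-jump integral together with boundedness of $\chi$ on $\mathbb{R}^{n}\setminus\mathbb{D}$ to guarantee integrability of the stopped martingale. Beyond that, the argument is a routine Feynman--Kac computation.
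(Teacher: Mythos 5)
Your proposal follows essentially the same route as the paper: apply the It\^o formula for the jump process, use the product rule for $uY+Z$ (the paper notes ${\rm d}Y_{\eta}\cdot{\rm d}u=0$), invoke the symmetry of $\nu$ so that the compensator term reassembles into $-(-\Delta)^{s}u$ (the paper states this as $\int_{|\texttt{{\rm{y}}}|<1}(\nabla u,\texttt{{\rm{y}}})\,\nu({\rm d}\texttt{{\rm{y}}})=0$), kill the drift with the PDE, and split on $\{\tau_{t,\texttt{{\rm{x}}}}<T\}$ versus $\{\tau_{t,\texttt{{\rm{x}}}}\geq T\}$ using the exterior and terminal conditions. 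Your added remarks on localization and uniform integrability of the compensated Poisson integral are a reasonable refinement of a step the paper treats implicitly, but they do not change the argument.
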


\begin{proof}
 From It\^{o} formula, we have
 \begin{equation}
  \begin{aligned}
   {\rm d}u(\eta,\texttt{{\rm{X}}}_{\eta})
   =&\,\frac{\partial{u}}{\partial \eta}(\eta,\texttt{{\rm{X}}}_{\eta-}){\rm d}\eta
   +\sum_{i=1}^{n}b_{i}(\eta,\texttt{{\rm{X}}}_{\eta-})\frac{\partial{u}}{\partial x_{i}}(\eta,\texttt{{\rm{X}}}_{\eta-}){\rm d}\eta
   \\
   &+\int_{|\texttt{{\rm{y}}}|\geq 1}\left[u(\eta,\texttt{{\rm{X}}}_{\eta-}+\texttt{{\rm{y}}})
   -u(\eta,\texttt{{\rm{X}}}_{\eta-})\right]N({\rm d}\eta,{\rm d}\texttt{{\rm{y}}})
   \\
   &+\int_{|\texttt{{\rm{y}}}|<1}\left[u(\eta,\texttt{{\rm{X}}}_{\eta-}+\texttt{{\rm{y}}})
   -u(\eta,\texttt{{\rm{X}}}_{\eta-})\right]\widetilde{N}({\rm d}\eta,{\rm d}\texttt{{\rm{y}}})
   \\
   &+\int_{|\texttt{{\rm{y}}}|<1}\left[u(\eta,\texttt{{\rm{X}}}_{\eta-}+\texttt{{\rm{y}}})
   -u(\eta,\texttt{{\rm{X}}}_{\eta-})-(\nabla{u(\eta,\texttt{{\rm{X}}}_{\eta-})},
   \texttt{{\rm{y}}})\right]\nu({\rm d}\texttt{{\rm{y}}}){\rm d}\eta.
  \end{aligned}
  \end{equation}
 Since
 \begin{equation}
 {\rm d}Y_{\eta}=c(\eta,\texttt{{\rm{X}}}_{\eta-})Y_{\eta}{\rm d}\eta
 \quad {\rm and} \quad
 {\rm d}Y_{\eta}\cdot{\rm d}u(\eta,\texttt{{\rm{X}}}_{\eta})=0,
 \end{equation}
 it follows that
 \begin{equation}
  {\rm d}u(\eta,\texttt{{\rm{X}}}_{\eta})Y_{\eta}=u(\eta,\texttt{{\rm{X}}}_{\eta}){\rm d}Y_{\eta}
  +Y_{\eta}{\rm d}u(\eta,\texttt{{\rm{X}}}_{\eta}).
 \end{equation}
 Thus, we obtain
 \begin{equation}
  \begin{aligned}
   &\mathbf{E}\bigg\{
   u(T\wedge\tau_{t,\texttt{{\rm{x}}}},\texttt{{\rm{X}}}_{T\wedge\tau_{t,\texttt{{\rm{x}}}}}
   ^{t,\texttt{{\rm{x}}}})  Y_{T\wedge\tau_{t,\texttt{{\rm{x}}}}}
   ^{t,\texttt{{\rm{x}}},1}+Z_{T\wedge\tau_{t,\texttt{{\rm{x}}}}}
   ^{t,\texttt{{\rm{x}}},1,0}-u(t,\texttt{{\rm{x}}})\bigg\}
   \\
   =&\mathbf{E}\Bigg\{\int_{t}^{T\wedge\tau_{t,\texttt{{\rm{x}}}}}
   \bigg[\frac{\partial{u}}{\partial \eta}(\eta,\texttt{{\rm{X}}}_{\eta-})
   +\sum_{i=1}^{n}b_{i}(\eta,\texttt{{\rm{X}}}_{\eta-})\frac{\partial{u}}{\partial x_{i}}
   (\eta,\texttt{{\rm{X}}}_{\eta-})
   \\
   &+\int_{\texttt{{\rm{y}}}\in\mathbb{R}^{n}}\Big[u(\eta,\texttt{{\rm{X}}}_{\eta-}+\texttt{{\rm{y}}})
   -u(\eta,\texttt{{\rm{X}}}_{\eta-})-I_{\{|\texttt{{\rm{y}}}|<1\}}
   (\nabla{u(\eta,\texttt{{\rm{X}}}_{\eta-})},\texttt{{\rm{y}}})\Big]\nu({\rm d}\texttt{{\rm{y}}})
   \\
   &+f(\eta,\texttt{{\rm{X}}}_{\eta-})
   +c(\eta,\texttt{{\rm{X}}}_{\eta-})u(\eta,\texttt{{\rm{X}}}_{\eta-})\bigg]
   \exp{\left(\int_{t}^{\eta}c(v,\texttt{{\rm{X}}}_{v-}){\rm d}v\right)}
   {\rm d}\eta\Bigg\}
   \\
   =&0,
  \end{aligned}
 \end{equation}
 where we have used
 \begin{equation}
 \int_{|\texttt{{\rm{y}}}|<1}(\nabla{u(\eta,\texttt{{\rm{X}}}_{\eta-})},
 \texttt{{\rm{y}}})\nu({\rm d}\texttt{{\rm{y}}})=0, \quad
 Y_{\eta}=\exp{\left(\int_{t}^{\eta}c(v,\texttt{{\rm{X}}}_{v-}){\rm d}v\right)}
 \end{equation}
 and equation (\ref{eq:terminalboundaryproblem}). Thus we complete the proof.
\end{proof}

At last, we introduce some notations which will be used later.

For $\beta=\lfloor\beta\rfloor+\{\beta\}^{+}>0$, $\lfloor\beta\rfloor\in
\mathbb{Z}^{+}\cup\{0\}$, $\{\beta\}^{+}\in[0,1)$,
let $C^{\beta}([0,T]\times\mathbb{R}^{n})$ denote the space of measurable functions
$u(t,\texttt{{\rm{x}}})$ on $\mathbb{R}^{n}$ for any $t\in[0,T]$
such that the norm
\begin{equation}
 |u(t,\texttt{{\rm{x}}})|_{\beta}=\sum_{|\gamma|\leq \lfloor\beta\rfloor}
  \sup_{(t,\texttt{{\rm{x}}})\in
 [0,T]\times\mathbb{R}^{n}}|\partial_{\texttt{{\rm{x}}}}^{\gamma}u(t,\texttt{{\rm{x}}})|
 +\sup_{\substack{\gamma=\lfloor\beta\rfloor \\t,\texttt{{\rm{x}}},\texttt{{\rm{h}}}\neq \texttt{{\rm{0}}}}}
 \frac{\left|\partial_{\texttt{{\rm{x}}}}^{\gamma}u(t,\texttt{{\rm{x+h}}})
 -\partial_{\texttt{{\rm{x}}}}^{\gamma}u(t,\texttt{{\rm{x}}})\right|}
 {|\texttt{{\rm{h}}}|^{\{\beta\}^{+}}}
\end{equation}
is finite. $C^{\beta}(\mathbb{R}^{n})$ is just the general H\"{o}lder space
whose functions are defined on $\mathbb{R}^{n}$.

Define $C^{1,\beta}([0,T]\times\mathbb{R}^{n})$ as the space of measurable functions
$u(t,\texttt{{\rm{x}}}) \in C^{\beta}([0,T]\times\mathbb{R}^{n})$
whose first-order derivative with respect to $t$ are bounded continuous on
$[0,T]$ for any $\texttt{{\rm{x}}}\in\mathbb{R}^{n}$ with natural norm.
Just like \cite{Deligiannidis2021}, we give the following assumptions.
\\
Assumption I.
There exists a unique solution $u(t,\texttt{{\rm{x}}})$ to
parabolic problem (\ref{eq:terminalboundaryproblem}) such that
\begin{equation}
 u(t,\texttt{{\rm{x}}})\in C^{1,\beta}([0,T]\times\mathbb{R}^{n}).
\end{equation}
\\
Assumption II.
The functions $b_{i}(t,\texttt{{\rm{x}}})\,(i=1,\ldots,n)$, $c(t,\texttt{{\rm{x}}})$,
$f(t,\texttt{{\rm{x}}})$, and their first derivatives with respect to $t$
and $\texttt{{\rm{x}}}$ are uniformly bounded.
\\
Assumption III.
The functions $b_{i}(t,\texttt{{\rm{x}}})\,(i=1,\ldots,n)$, $c(t,\texttt{{\rm{x}}})$,
$f(t,\texttt{{\rm{x}}})$, their first derivatives with respect to
$t$ and $\texttt{{\rm{x}}}$ and their second derivatives with respect to
$\texttt{{\rm{x}}}$ are uniformly bounded.
%%%%%%%%%%%%%%%%%%%%%%%%%%%%%%%%%%%%%%%%%%%%%%%%%%%%%%%
\section{Jump-adapted Euler scheme}
For $\varepsilon\in(0,1)$, we approximate the symmetric $2s$-stable process
\begin{equation}
L_{t}=\int_{0}^{t}\int_{|\texttt{{\rm{y}}}|<1}\texttt{{\rm{y}}}
  \widetilde{N}({\rm d}\eta,{\rm d}\texttt{{\rm{y}}})+\int_{0}^{t}\int_{|\texttt{{\rm{y}}}|\geq1}
  \texttt{{\rm{y}}} N({\rm d}\eta,{\rm d}\texttt{{\rm{y}}})
\end{equation}
by
\begin{equation}\label{eq:approLevy}
L_{t}^{\varepsilon}=\int_{0}^{t}\int_{\varepsilon<|\texttt{{\rm{y}}}|<1}\texttt{{\rm{y}}}
  \widetilde{N}({\rm d}\eta,{\rm d}\texttt{{\rm{y}}})+\int_{0}^{t}\int_{|\texttt{{\rm{y}}}|\geq1}
  \texttt{{\rm{y}}} N({\rm d}\eta,{\rm d}\texttt{{\rm{y}}}).
\end{equation}
Unlike replacing small jumps by Brownian motion in \cite{Higa&Tankov2010,Mikulevicius2012},
we directly remove small jumps. In the following lemma, we give the weak convergent order.

\begin{lemma}
 Let $s\in(0,1)$, $h\in C^{\beta}(\mathbb{R}^{n})$, $\beta\in (2s\vee1,2)$. Then there is a
 constant $C_{1}$, such that for
 $0\leq t'\leq t\leq T$, we have
 \begin{equation}
  \left|\mathbf{E}\left[h(L_{t-t'})-h(L_{t-t'}^{\varepsilon})\right]\right|
  \leq C_{1}\varepsilon^{\beta-2s}|h|_{\beta}(t-t').
 \end{equation}
\end{lemma}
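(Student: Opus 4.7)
\medskip

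\noindent\textbf{Proof proposal.} The plan is to compare the two Lévy semigroups generated by $L$ and $L^{\varepsilon}$ via a generator-interpolation argument and to bound the difference of their generators using the Hölder regularity of $h$ together with the $\beta>2s$ condition, which is exactly what keeps the small-jump integral finite.

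First I would set $P_r h(x)=\mathbf{E}[h(x+L_r)]$ and $P_r^{\varepsilon}h(x)=\mathbf{E}[h(x+L_r^{\varepsilon})]$, so the quantity to estimate is $P_{t-t'}h(0)-P_{t-t'}^{\varepsilon}h(0)$. The infinitesimal generators are
\begin{equation*}
\mathcal{L}h(x)=\int_{\mathbb{R}^{n}}\bigl[h(x+y)-h(x)-I_{\{|y|<1\}}(\nabla h(x),y)\bigr]\nu(\mathrm{d}y),
\end{equation*}
and $\mathcal{L}^{\varepsilon}$ is defined by the same formula with the domain of integration restricted to $\{|y|\ge\varepsilon\}$. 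Since both $L$ and $L^{\varepsilon}$ are Lévy processes, $P_r$, $P_r^{\varepsilon}$, $\mathcal{L}$ and $\mathcal{L}^{\varepsilon}$ are all convolution operators and therefore commute pairwise. Differentiating $P_u P_{r-u}^{\varepsilon}h$ in $u$ and integrating gives the standard identity
\begin{equation*}
P_r h-P_r^{\varepsilon}h=\int_{0}^{r} P_u\bigl(\mathcal{L}-\mathcal{L}^{\varepsilon}\bigr)P_{r-u}^{\varepsilon}h\,\mathrm{d}u.
\end{equation*}

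Next I would estimate $(\mathcal{L}-\mathcal{L}^{\varepsilon})g$ for a generic $g\in C^{\beta}(\mathbb{R}^{n})$ with $\beta\in(2s\vee 1,2)$. Since $\beta>1$, a first-order Taylor expansion with integral remainder together with the $(\beta-1)$-Hölder continuity of $\nabla g$ yields
\begin{equation*}
\bigl|g(x+y)-g(x)-(\nabla g(x),y)\bigr|\le \tfrac{1}{\beta}|g|_{\beta}|y|^{\beta},\qquad |y|<\varepsilon<1.
\end{equation*}
Integrating this against $\nu(\mathrm{d}y)=C(n,s)|y|^{-n-2s}\mathrm{d}y$ on $\{|y|<\varepsilon\}$ and passing to polar coordinates produces
\begin{equation*}
\sup_{x}\bigl|(\mathcal{L}-\mathcal{L}^{\varepsilon})g(x)\bigr|\le \tilde C(n,s)\,|g|_{\beta}\int_{0}^{\varepsilon}r^{\beta-2s-1}\mathrm{d}r=\frac{\tilde C(n,s)}{\beta-2s}\,|g|_{\beta}\,\varepsilon^{\beta-2s},
\end{equation*}
where the finiteness of the radial integral uses precisely the assumption $\beta>2s$. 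Finally, the convolution representation of $P_{r-u}^{\varepsilon}$ against the probability law of $L_{r-u}^{\varepsilon}$ gives $|P_{r-u}^{\varepsilon}h|_{\beta}\le|h|_{\beta}$ (Hölder norms are preserved by translation-invariant probabilistic averaging), and $P_u$ is an $L^{\infty}$ contraction. Plugging $g=P_{r-u}^{\varepsilon}h$ into the bound above and integrating $u$ from $0$ to $r=t-t'$ produces the stated estimate with $C_1=\tilde C(n,s)/[\beta(\beta-2s)]$, evaluated at $x=0$.

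The only delicate point I anticipate is the interpolation identity: one must justify differentiating $P_u P_{r-u}^{\varepsilon}h$ in $u$ and the application of $\mathcal{L}$ to a Hölder (not $C^{2}$) function. The latter is handled by observing that the generator formula above makes sense pointwise for any $g\in C^{\beta}$ with $\beta>2s\vee 1$, because the $\mathbf{1}_{|y|<1}$ term renders the small-$y$ singularity integrable exactly at the rate $|y|^{\beta-n-2s}$; the former follows from dominated convergence once this pointwise control is in place. Everything else is routine bookkeeping.
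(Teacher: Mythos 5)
Your proposal is correct and is essentially the paper's own argument: the authors introduce $v(t',\texttt{{\rm{x}}})=\mathbf{E}[h(\texttt{{\rm{x}}}+L_{t}^{\varepsilon}-L_{t'}^{\varepsilon})]$ (your $P_{t-t'}^{\varepsilon}h$), note $|v|_{\beta}\leq|h|_{\beta}$, and apply It\^{o}'s formula along the full process $L$ so that only the small-jump generator difference survives, which is exactly your Duhamel identity $P_{r}h-P_{r}^{\varepsilon}h=\int_{0}^{r}P_{u}(\mathcal{L}-\mathcal{L}^{\varepsilon})P_{r-u}^{\varepsilon}h\,\mathrm{d}u$ written probabilistically. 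The remaining quantitative steps --- the first-order Taylor remainder controlled by the $(\beta-1)$-H\"{o}lder continuity of the gradient and the integral $\int_{|\texttt{{\rm{y}}}|<\varepsilon}|\texttt{{\rm{y}}}|^{\beta-n-2s}\mathrm{d}\texttt{{\rm{y}}}\sim\varepsilon^{\beta-2s}$ --- coincide with the paper's, so no gap.
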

\begin{proof}
 By It\^{o} formula,
 \begin{equation}
  v(t',\texttt{{\rm{x}}})=\mathbf{E}[h(\texttt{{\rm{x}}}+L_{t}^{\varepsilon}-L_{t'}^{\varepsilon})]
 \end{equation}
 is the solution of the backward Kolmogorov equation with $0\leq \eta\leq t$,
 \begin{equation}\label{eq:Kolmogorovequ}
 \left\{
  \begin{aligned}
   &\frac{\partial v}{\partial \eta}(\eta,\texttt{{\rm{x}}})
   +\int_{|\texttt{{\rm{y}}}|>\varepsilon}v(\eta,\texttt{{\rm{x+y}}})-v(\eta,\texttt{{\rm{x}}})
   -I_{\{|\texttt{{\rm{y}}}|<1\}}(\nabla v(\eta,\texttt{{\rm{x}}}),\texttt{{\rm{y}}})
   \nu({\rm d}\texttt{{\rm{y}}})=0,
   \\[3pt]
   &v(t,\texttt{{\rm{x}}})=h(\texttt{{\rm{x}}}).
  \end{aligned}
  \right.
 \end{equation}
 We can get $v\in C^{\beta}([0,t]\times \mathbb{R}^{n})$ for any $\eta\in[0,t]$
 since $|v|_{\beta}\leq|h|_{\beta}$.
 By It\^{o} formula and (\ref{eq:Kolmogorovequ}), we have
 \begin{equation}
  \begin{aligned}
   &\big|\mathbf{E}\left[h(L_{t}-L_{t'})\right]
   -\mathbf{E}\left[h(L_{t}^{\varepsilon}-L_{t'}^{\varepsilon})\right]\big|
   \\[3pt]
   =&\big|\mathbf{E}[v(t,L_{t}-L_{t'})-v(t',0)]\big|
   \\[3pt]
   =&\,\Bigg|\mathbf{E}\bigg[\int_{t'}^{t}\frac{\partial v}{\partial \eta}(\eta,L_{\eta-}-L_{t'})
   {\rm d}\eta+\int_{t'}^{t}\int_{\texttt{{\rm{y}}}\in\mathbb{R}^{n}}\big[v(\eta,L_{\eta-}-L_{t'}
   +\texttt{{\rm{y}}})
   \\[3pt]
   &-v(\eta,L_{\eta-}-L_{t'})-I_{\{|\texttt{{\rm{y}}}|<1\}}(\nabla v(\eta,L_{\eta-}-L_{t'}),
   \texttt{{\rm{y}}})\big]\nu({\rm d}\texttt{{\rm{y}}})\,{\rm d}\eta\bigg]\Bigg|
   \\[3pt]
   =&\,\Bigg|\mathbf{E}\bigg[\int_{t'}^{t}\int_{|\texttt{{\rm{y}}}|<\varepsilon}
   \big[v(\eta,L_{\eta-}-L_{t'}+\texttt{{\rm{y}}})
   \\[3pt]
   &-v(\eta,L_{\eta-}-L_{t'})-I_{\{|\texttt{{\rm{y}}}|<1\}}(\nabla v(\eta,L_{\eta-}-L_{t'}),
   \texttt{{\rm{y}}})\big]\nu({\rm d}\texttt{{\rm{y}}})\,{\rm d}\eta\bigg]\Bigg|
   \\[3pt]
   \leq& \int_{t'}^{t}\int_{|\texttt{{\rm{y}}}|<\varepsilon}\int_{0}^{1}
   \mathbf{E}\left|(\nabla v(\eta,L_{\eta-}-L_{t'}+\alpha\texttt{{\rm{y}}})
   -\nabla v(\eta,L_{\eta-}-L_{t'}),\texttt{{\rm{y}}})\right|
   {\rm d}\alpha\,\nu({\rm d}\texttt{{\rm{y}}}){\rm d}\eta
   \\[3pt]
   \leq&\, C_{1}(t-t')|v|_{\beta}\int_{|\texttt{{\rm{y}}}|<\varepsilon}
   |\texttt{{\rm{y}}}|^{-n-2s+\beta}{\rm d}\texttt{{\rm{y}}}
   \\[3pt]
   \leq&\, C_{1}(t-t')|h|_{\beta}\,\varepsilon^{\beta-2s}.
  \end{aligned}
 \end{equation}
\end{proof}

Consider the following $L_{t}^{\varepsilon}$-jump adapted time discretization:
\begin{equation}
 \left\{
 \begin{aligned}
 &\tau_{0}=t,
 \\[3pt]
 &\tau_{i+1}=\inf\{t>\tau_{i}:\Delta{L_{t}^{\varepsilon}}\neq 0\}\wedge
 \widetilde{\tau}_{t,\texttt{{\rm{x}}}}\wedge{T},
 \end{aligned}
 \right.
\end{equation}
where $\widetilde{\tau}_{t,\texttt{{\rm{x}}}}=\left\{r\geq{t},
\widetilde{\texttt{{\rm{X}}}}_{r}\notin{\mathbb{D}}\right\}$ and
$\widetilde{\texttt{{\rm{X}}}}_{r}$ is given by the following system.
For $r\in(\tau_{i},\tau_{i+1})$, we define
\begin{equation}\label{eq:approSDEs12}
 \left\{
 \begin{aligned}
  &\widetilde{\texttt{{\rm{X}}}}_{r}=\widetilde{\texttt{{\rm{X}}}}_{\tau_{i}}
  +b(\tau_{i},\widetilde{\texttt{{\rm{X}}}}_{\tau_{i}})(r-\tau_{i}),
  \\[3pt]
  &\widetilde{Y}_{r}=\widetilde{Y}_{\tau_{i}}
  +c(\tau_{i},\widetilde{\texttt{{\rm{X}}}}_{\tau_{i}})
  \widetilde{Y}_{\tau_{i}}(r-\tau_{i}),
  \\[3pt]
  &\widetilde{Z}_{r}=\widetilde{Z}_{\tau_{i}}
  +f(\tau_{i},\widetilde{\texttt{{\rm{X}}}}_{\tau_{i}})
  \widetilde{Y}_{\tau_{i}}(r-\tau_{i}),
 \end{aligned}
 \right.
\end{equation}
where the starting point $\widetilde{\texttt{{\rm{X}}}}_{t}=\texttt{{\rm{x}}}$,
$\widetilde{Y}_{t}=1$, and $\widetilde{Z}_{t}=0$. For $r=\tau_{i+1}$, we define
\begin{equation}\label{eq:approSDEs}
 \left\{
 \begin{aligned}
  &\widetilde{\texttt{{\rm{X}}}}_{\tau_{i+1}}=\widetilde{\texttt{{\rm{X}}}}_{\tau_{i+1}{-}}
  +\Delta L_{\tau_{i+1}}^{\varepsilon},
  \quad {\rm where}\quad \Delta L_{\tau_{i+1}}^{\varepsilon}=L_{\tau_{i+1}}^{\varepsilon}
  -L_{\tau_{i+1}^{-}}^{\varepsilon},
  \\[3pt]
  &\widetilde{Y}_{\tau_{i+1}}=\widetilde{Y}_{\tau_{i}}
  +c(\tau_{i},\widetilde{\texttt{{\rm{X}}}}_{\tau_{i}})
  \widetilde{Y}_{\tau_{i}}(\tau_{i+1}-\tau_{i}),
  \\[3pt]
  &\widetilde{Z}_{\tau_{i+1}}=\widetilde{Z}_{\tau_{i}}
  +f(\tau_{i},\widetilde{\texttt{{\rm{X}}}}_{\tau_{i}})
  \widetilde{Y}_{\tau_{i}}(\tau_{i+1}-\tau_{i}).
 \end{aligned}
 \right.
\end{equation}
 Thus, we approximate the solution $u(t,\texttt{{\rm{x}}})$ of equation
 (\ref{eq:terminalboundaryproblem})
by
\begin{equation}\label{eq:approSolution}
 \mathbf{E}\left[
   u(T\wedge\widetilde{\tau}_{t,\texttt{{\rm{x}}}},
   \widetilde{\texttt{{\rm{X}}}}_{T\wedge\widetilde{\tau}_{t,\texttt{{\rm{x}}}}})
   \widetilde{Y}_{T\wedge\widetilde{\tau}_{t,\texttt{{\rm{x}}}}}
   +\widetilde{Z}_{T\wedge\widetilde{\tau}_{t,\texttt{{\rm{x}}}}}\right].
\end{equation}

We summarize the method in Algorithm 1.
\begin{algorithm}[t]\label{algo:sde}
\caption{Algorithm for (\ref{eq:approSolution})} %算法的名字
\begin{algorithmic}[1]
\State
{\bf initialize:}
$\tau_{0}=t$, $\widetilde{\texttt{{\rm{X}}}}_{\tau_{0}}=\texttt{{\rm{x}}}$,
$\widetilde{Y}_{\tau_{0}}=1$, and $\widetilde{Z}_{\tau_{0}}=0$, $i=0$.% \State 后写一般语句
\While{$\tau_{i}<T$ and $\widetilde{\texttt{{\rm{X}}}}_{\tau_{i}}\in D$} % While语句，需要和EndWhile对应
　　\State {\bf Sample:} jump time $\tau$ ($\tau$ is exponentially distributed with parameter
    $\lambda_{\varepsilon}$ in (\ref{eq:jump intensity})).
    \\
\hspace*{0.18in} {\bf Set:} $\widetilde{\texttt{{\rm{X}}}}_{\tau_{i+1}^{-}}=\widetilde{\texttt{{\rm{X}}}}_{\tau_{i}}
  +b(\tau_{i},\widetilde{\texttt{{\rm{X}}}}_{\tau_{i}})\tau$.
     \If{$\tau+\tau_{i}>T$ or $\widetilde{\texttt{{\rm{X}}}}_{\tau_{i+1}^{-}}\notin \overline{D}$},
     % If 语句，需要和EndIf 对应
　　　　 \State {\bf Set:} $\tau_{i+1}=\sup\left\{\tau_{i}<t<T:\widetilde{\texttt{{\rm{X}}}}_{\tau_{i}}
           +b(\tau_{i},\widetilde{\texttt{{\rm{X}}}}_{\tau_{i}})(t-\tau_{i})\in \overline{D}\right\}$.
       \State {\bf Evaluate:} $\widetilde{\texttt{{\rm{X}}}}_{\tau_{i+1}}$ without jump,
         $\widetilde{Y}_{\tau_{i+1}}$, $\widetilde{Z}_{\tau_{i+1}}$ according to (\ref{eq:approSDEs}).
       \State{\bf Set:} $i=i+1$.
       \Else
　　　　\State {\bf Sample:} jump size $J_{\varepsilon}$ according to density
        (\ref{eq:PossionRandomMeasure}).
         \State {\bf Evaluate:} $\widetilde{\texttt{{\rm{X}}}}_{\tau_{i+1}}$,
         $\widetilde{Y}_{\tau_{i+1}}$, $\widetilde{Z}_{\tau_{i+1}}$ according to (\ref{eq:approSDEs}).
         \State{\bf Set:} $\tau_{i+1}=\tau_{i}+\tau$ and $i=i+1$.
   \EndIf \State {\bf end if}
\EndWhile \State {\bf end while}
\If{$\tau_{i+1}<T$} {\bf Set:} $\widetilde\tau_{t,\texttt{{\rm{x}}}}=\tau_{i+1}$,
 {\bf Evaluate:} $\chi(\widetilde{\tau}_{t,\texttt{{\rm{x}}}},
   \widetilde{\texttt{{\rm{X}}}}_{\widetilde{\tau}_{t,\texttt{{\rm{x}}}}})
   \widetilde{Y}_{\widetilde{\tau}_{t,\texttt{{\rm{x}}}}}
   +\widetilde{Z}_{\widetilde{\tau}_{t,\texttt{{\rm{x}}}}}$.
    \Else \quad
    {\bf Set:} $\widetilde\tau_{t,\texttt{{\rm{x}}}}=T$,
 {\bf Evaluate:} $g(T,\widetilde{\texttt{{\rm{X}}}}_{T})\widetilde{Y}_{T}+\widetilde{Z}_{T}$.
\EndIf \State {\bf end if}
\State Loop above algorithm $N$ times.
\State{\bf Evaluate:}
$u(t,\texttt{{\rm{x}}})\approx \frac{1}{N}
\sum\limits_{j=1}^{N}\left[u(T\wedge\widetilde{\tau}_{t,\texttt{{\rm{x}}}},
   \widetilde{\texttt{{\rm{X}}}}_{T\wedge\widetilde{\tau}_{t,\texttt{{\rm{x}}}}}^{j})
   \widetilde{Y}_{T\wedge\widetilde{\tau}_{t,\texttt{{\rm{x}}}}}^{j}
   +\widetilde{Z}_{T\wedge\widetilde{\tau}_{t,\texttt{{\rm{x}}}}}^{j}\right]$.
\end{algorithmic}
\end{algorithm}

To give the error estimate, we need the following lemmas.
\begin{lemma} \label{lemma:jumptime}
 Let $\delta_{i}=(\widetilde{\tau}_{t,\texttt{{\rm{x}}}}\wedge T)-\tau_{i}$, and
 \begin{equation}\label{eq:jump intensity}
  \lambda_{\varepsilon}=\nu(|\texttt{{\rm{y}}}|>\varepsilon)
  =\frac{2^{2s}\Gamma\left(\frac{n}{2}+s\right)\varepsilon^{-2s}}
  {\Gamma(1-s)\Gamma{\left(\frac{n}{2}\right)}}
 \end{equation}
 be the jump intensity. Then
 \\
 {\rm(i)}there is a constant $C_{1}$, such that for any $i\geq0$,
 \begin{equation}
  C_{1}(\delta_{i}\wedge\lambda_{\varepsilon}^{-1})
  \leq\mathbf{E}\left[\tau_{i+1}-\tau_{i}|\mathcal{F}_{\tau_{i}}\right]
  \leq \delta_{i}\wedge\lambda_{\varepsilon}^{-1},
 \end{equation}
 \\
 {\rm(ii)}there is a constant $C_{2}$, such that for any $i\geq0$,
 \begin{equation}
  \begin{aligned}
  \mathbf{E}\left[(\tau_{i+1}-\tau_{i})^{\frac{3}{2}}
  |\mathcal{F}_{\tau_{i}}\right]
  &\leq C_{2}\left(\delta_{i}^{\frac{1}{2}}\wedge
  \lambda_{\varepsilon}^{-\frac{1}{2}}\right)\mathbf{E}
  \left[\tau_{i+1}-\tau_{i}|\mathcal{F}_{\tau_{i}}\right]
  ,
  \end{aligned}
 \end{equation}
 \\
 {\rm(iii)}there is a constant $C_{3}$, such that for any $i\geq0$,
 \begin{equation}
  \begin{aligned}
  \mathbf{E}\left[(\tau_{i+1}-\tau_{i})^2|\mathcal{F}_{\tau_{i}}\right]
  &\leq C_{3}\mathbf{E}\left[\delta_{i}^{2}\wedge\lambda_{\varepsilon}^{-2}|\mathcal{F}_{\tau_{i}}\right]
  \\[3pt]
  &\leq C_{3}(\delta_{i}\wedge\lambda_{\varepsilon}^{-1})\mathbf{E}
  \left[\tau_{i+1}-\tau_{i}|\mathcal{F}_{\tau_{i}}\right]
  ,
  \end{aligned}
 \end{equation}
\end{lemma}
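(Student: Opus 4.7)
The plan is to reduce each inequality to a moment computation for the minimum of an exponential waiting time and a deterministic horizon. Conditional on $\mathcal{F}_{\tau_i}$, on $(\tau_i,\tau_{i+1})$ the process $\widetilde{\texttt{{\rm{X}}}}$ evolves purely along the drift $b$, and the waiting time until the next jump of $L^\varepsilon$ is $\mathrm{Exp}(\lambda_\varepsilon)$-distributed and independent of $\mathcal{F}_{\tau_i}$ by the strong Markov property of the Poisson random measure. Writing $\tau_{i+1}-\tau_i=\xi_{i+1}\wedge\delta_i$ with $\xi_{i+1}\sim\mathrm{Exp}(\lambda_\varepsilon)$, all three claims reduce to evaluating
\begin{equation*}
\mathbf{E}\!\left[(\xi_{i+1}\wedge\delta_i)^p\,\big|\,\mathcal{F}_{\tau_i}\right]
=\int_0^{\delta_i}\lambda_\varepsilon e^{-\lambda_\varepsilon u}u^p\,{\rm d}u + \delta_i^p e^{-\lambda_\varepsilon\delta_i}, \qquad p\in\{1,\,3/2,\,2\}.
\end{equation*}

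For (i) the integral equals $\lambda_\varepsilon^{-1}(1-e^{-\lambda_\varepsilon\delta_i})$, and the elementary two-sided bound $(1-e^{-1})\min(x,1)\leq 1-e^{-x}\leq \min(x,1)$ yields both the upper estimate $\delta_i\wedge\lambda_\varepsilon^{-1}$ and the matching lower estimate with $C_1=1-e^{-1}$. For (ii) and (iii) I split on whether $\lambda_\varepsilon\delta_i\leq 1$ or $\lambda_\varepsilon\delta_i>1$. In the first regime, where $\delta_i\wedge\lambda_\varepsilon^{-1}=\delta_i$, I dominate $u^p$ in the integrand by $\delta_i^{p-1}u$ and the boundary term by $\delta_i^{p-1}\cdot\delta_i e^{-\lambda_\varepsilon\delta_i}$, pulling out $(\delta_i\wedge\lambda_\varepsilon^{-1})^{p-1}$ times $\mathbf{E}[\xi_{i+1}\wedge\delta_i\mid\mathcal{F}_{\tau_i}]$. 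In the second regime, where $\delta_i\wedge\lambda_\varepsilon^{-1}=\lambda_\varepsilon^{-1}$, I extend the integral to $[0,\infty)$ to obtain $\Gamma(p+1)\lambda_\varepsilon^{-p}$, noting the boundary term is swamped by exponential decay, and invoke part (i) to identify $\lambda_\varepsilon^{-p}$ up to a constant with $(\delta_i\wedge\lambda_\varepsilon^{-1})^{p-1}\mathbf{E}[\xi_{i+1}\wedge\delta_i\mid\mathcal{F}_{\tau_i}]$. Taking $p=3/2$ proves (ii); $p=2$ gives the first inequality of (iii), and the second inequality of (iii) is immediate from $(\delta_i\wedge\lambda_\varepsilon^{-1})^2\leq C_1^{-1}(\delta_i\wedge\lambda_\varepsilon^{-1})\mathbf{E}[\tau_{i+1}-\tau_i\mid\mathcal{F}_{\tau_i}]$ via part (i).

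The main technical point is that $\delta_i=(\widetilde{\tau}_{t,\texttt{{\rm{x}}}}\wedge T)-\tau_i$, as literally defined, involves the full future exit time of $\widetilde{\texttt{{\rm{X}}}}$ and is therefore not $\mathcal{F}_{\tau_i}$-measurable. I would resolve this by replacing $\delta_i$ inside the conditional expectation by the $\mathcal{F}_{\tau_i}$-measurable drift-only horizon $\rho_i=(\sigma_i\wedge T)-\tau_i\leq\delta_i$, where $\sigma_i$ is the first exit of the deterministic path $r\mapsto\widetilde{\texttt{{\rm{X}}}}_{\tau_i}+b(\tau_i,\widetilde{\texttt{{\rm{X}}}}_{\tau_i})(r-\tau_i)$ from $\overline{\mathbb{D}}$. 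The upper bounds are then automatic from $\rho_i\leq\delta_i$, and the lower bound in (i) is obtained by a comparison argument distinguishing the events $\{\tau_{i+1}<\widetilde{\tau}_{t,\texttt{{\rm{x}}}}\wedge T\}$, on which $\tau_{i+1}=\tau_i+\xi_{i+1}$ and the exponential-waiting-time calculation applies directly, from its complement, on which $\rho_i=\delta_i$ so the two horizons coincide.
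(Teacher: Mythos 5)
Your proof is correct and takes essentially the same route as the paper: the paper writes out only part (ii), using the identical explicit formula $\mathbf{E}[(\tau_{i+1}-\tau_i)^{3/2}\,|\,\mathcal{F}_{\tau_i}]=\lambda_\varepsilon\int_0^{\delta_i}e^{-\lambda_\varepsilon t}t^{3/2}\,\mathrm{d}t+e^{-\lambda_\varepsilon\delta_i}\delta_i^{3/2}$ and the same reduction to part (i), while citing Lemma 8 of Mikulevicius (2012) for (i) and (iii), so you are simply supplying the omitted computations by the same method. One caveat on your (otherwise well-taken) measurability remark: the inequality $\rho_i\le\delta_i$ is actually false on the event where the next jump ejects the process from $\mathbb{D}$ before the drift-only path would exit (there $\delta_i=\xi_{i+1}<\rho_i$), so rather than deducing the bounds for $\delta_i$ from those for $\rho_i$, one should simply read the lemma with the $\mathcal{F}_{\tau_i}$-measurable horizon $\rho_i$ in place of $\delta_i$ throughout, which is the sense in which it is used in the proof of Theorem 3.1.
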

\begin{proof}
 The proofs of (i) and (iii) are similar to those of Lemma 8 in \cite{Mikulevicius2012}.
 So we omit them. For (ii), we have
 \begin{equation}
  \begin{aligned}
   &\mathbf{E}\left[(\tau_{i+1}-\tau_{i})^\frac{3}{2}|\mathcal{F}_{\tau_{i}}\right]
  =\lambda_{\varepsilon}\int_{0}^{\delta_{i}}e^{-\lambda_{\varepsilon}t}t^{\frac{3}{2}}
  {\rm d}t+e^{-\lambda_{\varepsilon}\delta_{i}}{\delta_{i}}^{\frac{3}{2}}
  \\[3pt]
  \leq& \,C_{3}(\delta_{i}^{\frac{3}{2}}\wedge\lambda_{\varepsilon}^{-\frac{3}{2}})
  \leq C_{3}(\delta_{i}^{\frac{1}{2}}\wedge\lambda_{\varepsilon}^{-\frac{1}{2}})
  \mathbf{E}\left[\tau_{i+1}-\tau_{i}|\mathcal{F}_{\tau_{i}}\right].
  \end{aligned}
 \end{equation}
\end{proof}

We also need the following auxiliary lemma.
\begin{lemma}\label{lemma:Ybound}
 Under the Assumption {\rm II}, $\widetilde{Y}_{\tau_{i}}$ defined in {\rm (\ref{eq:approSDEs})}
 is uniformly bounded by a deterministic formula
 \begin{equation}
  |\widetilde{Y}_{\tau_{i}}|<e^{\overline{c}(T-t)}, \quad i=0,1,\ldots,
  n_{T\wedge{\widetilde{\tau}_{t,\texttt{{\rm{x}}}}}},
 \end{equation}
 where $\overline{c}=\max\limits_{(t,\texttt{{\rm{x}}})\in[0,T]\times \mathbb{D}}c(t,\texttt{{\rm{x}}})$.
\end{lemma}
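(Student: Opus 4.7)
The plan is to exploit the recursive definition of $\widetilde{Y}$ in (\ref{eq:approSDEs}), which is a linear update. Factoring out $\widetilde{Y}_{\tau_i}$ rewrites the recurrence as
\begin{equation*}
\widetilde{Y}_{\tau_{i+1}} = \widetilde{Y}_{\tau_i}\bigl(1 + c(\tau_i,\widetilde{\texttt{{\rm{X}}}}_{\tau_i})(\tau_{i+1}-\tau_i)\bigr),
\end{equation*}
so that iterating from the initial condition $\widetilde{Y}_{\tau_0}=1$ gives the telescoping product
\begin{equation*}
\widetilde{Y}_{\tau_{i+1}} = \prod_{j=0}^{i}\bigl(1 + c(\tau_j,\widetilde{\texttt{{\rm{X}}}}_{\tau_j})(\tau_{j+1}-\tau_j)\bigr).
\end{equation*}
This is the structural identity I would take as the starting point.

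Next, I would apply the elementary inequality $|1+x|\leq e^{|x|}$ factor by factor to convert the product into an exponential of a sum. Using Assumption II (which gives uniform boundedness of $c$, hence $c(\tau_j,\widetilde{\texttt{{\rm{X}}}}_{\tau_j})\leq \overline{c}$ pointwise, with the understanding that $\overline{c}$ controls the relevant sign of the increment), I would bound
\begin{equation*}
|\widetilde{Y}_{\tau_{i+1}}| \leq \exp\!\left(\sum_{j=0}^{i} \overline{c}\,(\tau_{j+1}-\tau_j)\right)
= e^{\overline{c}(\tau_{i+1}-\tau_0)}.
\end{equation*}
Finally, since $\tau_0=t$ and, by construction of the jump-adapted grid, $\tau_{i+1}\leq T\wedge\widetilde{\tau}_{t,\texttt{{\rm{x}}}}\leq T$, the right-hand side is at most $e^{\overline{c}(T-t)}$, which yields the stated bound. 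The argument is naturally phrased as an induction on $i$ with base case $\widetilde{Y}_{\tau_0}=1\leq e^{\overline{c}(T-t)}$.

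The only delicate point, and the one I would flag explicitly, is the sign of the factor $1+c(\tau_j,\widetilde{\texttt{{\rm{X}}}}_{\tau_j})(\tau_{j+1}-\tau_j)$: since $c$ may be negative, a direct application of $1+x\leq e^x$ requires the mild observation that the increments $\tau_{j+1}-\tau_j$ are controlled (in expectation by $\lambda_\varepsilon^{-1}$ via Lemma \ref{lemma:jumptime}) while $c$ is uniformly bounded, so the strict inequality $|\widetilde{Y}_{\tau_i}|< e^{\overline{c}(T-t)}$ (rather than $\leq$) follows from the strict inequality $1+x<e^x$ for $x\neq 0$, except on the negligible event that all increments vanish. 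Beyond this bookkeeping, the estimate is a routine Grönwall-type bound for a discrete linear recursion and presents no real obstacle.
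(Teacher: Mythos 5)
Your proof is correct and follows essentially the same route as the paper's: rewrite the update as $\widetilde{Y}_{\tau_{i+1}}=\widetilde{Y}_{\tau_i}\bigl(1+c(\tau_i,\widetilde{\texttt{{\rm{X}}}}_{\tau_i})\Delta\tau_i\bigr)$, bound each factor via $|1+x|\leq e^{|x|}$, and telescope to $e^{\overline{c}(\tau_{i}-t)}\leq e^{\overline{c}(T-t)}$. The sign issue you flag is real but is equally unaddressed in the paper's own proof; note, however, that your proposed fix via the expectation bound of Lemma \ref{lemma:jumptime} cannot yield the claimed \emph{deterministic} bound --- the clean repair is simply to read $\overline{c}$ as the uniform bound on $|c|$ guaranteed by Assumption II, after which the argument closes pointwise.
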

\begin{proof}
 From (\ref{eq:approSDEs}) and $\widetilde{Y}_{\tau_{0}}=\widetilde{Y}_{t}=1$, we get the required
 estimate as follows,
 \begin{equation}
  \begin{aligned}
 |\widetilde{Y}_{\tau_{i}}|=&|\widetilde{Y}_{\tau_{i-1}}
  +c(\tau_{i-1},\widetilde{\texttt{{\rm{X}}}}_{\tau_{i-1}})
  \widetilde{Y}_{\tau_{i-1}}(\tau_{i}-\tau_{i-1})|
  \\[3pt]
  &\leq |\widetilde{Y}_{\tau_{i-1}}||1+\overline{c}(\tau_{i}-\tau_{i-1})|
  \leq |\widetilde{Y}_{\tau_{0}}|e^{\overline{c}(\tau_{i}-t)}
  \leq e^{\overline{c}(T-t)}.
  \end{aligned}
 \end{equation}
\end{proof}

Now we prove the convergence theorem for Algorithm 1.
\begin{theorem}\label{thm:Errorestimate}
 Let $\varepsilon\in(0,1)$. Under Assumption {\rm I} with $\beta\in(2s\vee1,2)$
 and Assumption {\rm II}, the error estimate
 \begin{equation}
  \left|\mathbf{E}\left[
   u(T\wedge\widetilde{\tau}_{t,\texttt{{\rm{x}}}},
   \widetilde{\texttt{{\rm{X}}}}_{T\wedge\widetilde{\tau}_{t,\texttt{{\rm{x}}}}})
   \widetilde{Y}_{T\wedge\widetilde{\tau}_{t,\texttt{{\rm{x}}}}}
   +\widetilde{Z}_{T\wedge\widetilde{\tau}_{t,\texttt{{\rm{x}}}}}\right]
    -u(t,\texttt{{\rm{x}}})\right|
    \leq C_{1}\varepsilon^{2s}+C_{2}\varepsilon^{\beta-2s}
 \end{equation}
 holds with $C_{1}$ and $C_{2}$ being constants independent of $t$ and $\texttt{{\rm{x}}}$.
\end{theorem}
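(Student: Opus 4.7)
The plan is to write the error as a telescoping sum over the random jump-adapted grid $\{\tau_i\}$ and to bound each one-step increment via It\^o's formula together with the PDE satisfied by $u$. Using the probabilistic representation of Section 2, $u(t,\texttt{{\rm{x}}})$ equals the expectation of the exact payoff, so after telescoping I would arrive at a decomposition of the form
\begin{equation*}
\mathbf{E}\left[u(T\wedge\widetilde{\tau}_{t,\texttt{{\rm{x}}}},\widetilde{\texttt{{\rm{X}}}}_{T\wedge\widetilde{\tau}_{t,\texttt{{\rm{x}}}}})\widetilde{Y}_{T\wedge\widetilde{\tau}_{t,\texttt{{\rm{x}}}}}+\widetilde{Z}_{T\wedge\widetilde{\tau}_{t,\texttt{{\rm{x}}}}}\right]-u(t,\texttt{{\rm{x}}})=\sum_{i\geq 0}\mathbf{E}\left[R_i^{(1)}+R_i^{(2)}\right],
\end{equation*}
in which $R_i^{(1)}$ is the coefficient-freezing error on $[\tau_i,\tau_{i+1})$ generated by the numerical scheme using $b,c,f$ evaluated at $(\tau_i,\widetilde{\texttt{{\rm{X}}}}_{\tau_i})$ in place of $(r,\widetilde{\texttt{{\rm{X}}}}_r)$, and $R_i^{(2)}$ is the truncation error produced by removing the $|\texttt{{\rm{y}}}|<\varepsilon$ part of the L\'evy integrand from the fractional-Laplacian term inside the PDE.

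I would then handle the two residuals separately. For $R_i^{(1)}$, Assumption II supplies uniform first-order bounds on $b,c,f$ and the deterministic drift step in (\ref{eq:approSDEs12}) gives $|\widetilde{\texttt{{\rm{X}}}}_r-\widetilde{\texttt{{\rm{X}}}}_{\tau_i}|\leq \|b\|_\infty(r-\tau_i)$, so that $|R_i^{(1)}|\leq C(\tau_{i+1}-\tau_i)^2$ pathwise, with Lemma \ref{lemma:Ybound} absorbing the $\widetilde{Y}$-factor uniformly. Summing over $i$ and invoking Lemma \ref{lemma:jumptime}(iii) reduces the total to $C\lambda_\varepsilon^{-1}\mathbf{E}\sum_i(\tau_{i+1}-\tau_i)\leq CT\lambda_\varepsilon^{-1}$, which is $O(\varepsilon^{2s})$ by the definition of $\lambda_\varepsilon$, yielding the first contribution $C_1\varepsilon^{2s}$. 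For $R_i^{(2)}$, Assumption I gives $u(\eta,\cdot)\in C^\beta(\mathbb{R}^n)$ uniformly in $\eta$, and the same Taylor-expansion estimate used in the proof of Lemma 3.1 bounds the small-jump integrand by $C|u|_\beta|\texttt{{\rm{y}}}|^\beta$, so that $|R_i^{(2)}|\leq C|u|_\beta\varepsilon^{\beta-2s}(\tau_{i+1}-\tau_i)$; summing over $i$ produces $C_2\varepsilon^{\beta-2s}$.

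The hard part will be organizing the It\^o expansion so that the drift, compensator and large-jump contributions generated by the numerical dynamics (\ref{eq:approSDEs12})--(\ref{eq:approSDEs}) cancel cleanly against the corresponding terms produced by the PDE $\partial_t u-(-\Delta)^s u+b\cdot\nabla u+cu+f=0$, leaving only the two residuals above; in particular, the $|\texttt{{\rm{y}}}|\geq\varepsilon$ piece of the L\'evy integrand matches exactly the jumps actually simulated, which is why only the $|\texttt{{\rm{y}}}|<\varepsilon$ remainder contributes to $R_i^{(2)}$. A secondary subtlety is the treatment of the random terminal time $T\wedge\widetilde{\tau}_{t,\texttt{{\rm{x}}}}$: when $\tau_{i+1}$ is triggered by the drift segment exiting $\mathbb{D}$ rather than by a jump of $L^\varepsilon$, $\widetilde{\texttt{{\rm{X}}}}$ is continuous at $\tau_{i+1}$ and $u$ extends continuously through $\partial\mathbb{D}$ via the exterior datum $\chi$, so no additional boundary error is incurred. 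Combining the two contributions produces the stated bound $C_1\varepsilon^{2s}+C_2\varepsilon^{\beta-2s}$.
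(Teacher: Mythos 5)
Your proposal matches the paper's own argument essentially step for step: the same telescoping over the jump-adapted grid, the same It\^o/martingale cancellation against the PDE leaving exactly a coefficient-freezing residual (the paper's $B_{i1},B_{i3},B_{i4},B_{i5}$, each $O((\tau_{i+1}-\tau_i)^2)$ in conditional expectation) plus the small-jump compensator residual (the paper's $B_{i2}$, bounded via the $C^\beta$ regularity of $\nabla u$ by $C\varepsilon^{\beta-2s}\mathbf{E}[\tau_{i+1}-\tau_i\,|\,\mathcal{F}_{\tau_i}]$), and the same final summation using Lemma \ref{lemma:jumptime} and Lemma \ref{lemma:Ybound}. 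The approach and both error contributions are correct as stated.
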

\begin{proof}
 \begin{equation}\label{eq:errorestimate1}
  \begin{aligned}
  &\left|\mathbf{E}\left[
   u(T\wedge\widetilde{\tau}_{t,\texttt{{\rm{x}}}},
   \widetilde{\texttt{{\rm{X}}}}_{T\wedge\widetilde{\tau}_{t,\texttt{{\rm{x}}}}})
   \widetilde{Y}_{T\wedge\widetilde{\tau}_{t,\texttt{{\rm{x}}}}}
   +\widetilde{Z}_{T\wedge\widetilde{\tau}_{t,\texttt{{\rm{x}}}}}\right]
    -u(t,\texttt{{\rm{x}}})\right|
    \\[3pt]
   =& \left|\mathbf{E}\left\{\sum_{i=0}^{n_{T\wedge\widetilde{\tau}_{t,\texttt{{\rm{x}}}}}-1}
   \left[u(\tau_{i+1},\widetilde{\texttt{{\rm{X}}}}_{\tau_{i+1}})\widetilde{Y}_{\tau_{i+1}}
   +\widetilde{Z}_{\tau_{i+1}}-u(\tau_{i},\widetilde{\texttt{{\rm{X}}}}_{\tau_{i}})
   \widetilde{Y}_{\tau_{i}}-\widetilde{Z}_{\tau_{i}}\right]\right\}\right|.
  \end{aligned}
 \end{equation}
 By It\^{o} formula, the martingale property, systems (\ref{eq:approSDEs12}) and
 (\ref{eq:approSDEs}), we have
  \begin{align}
  &\left|\mathbf{E}\left[
   u(T\wedge\widetilde{\tau}_{t,\texttt{{\rm{x}}}},
   \widetilde{\texttt{{\rm{X}}}}_{T\wedge\widetilde{\tau}_{t,\texttt{{\rm{x}}}}})
   \widetilde{Y}_{T\wedge\widetilde{\tau}_{t,\texttt{{\rm{x}}}}}
   +\widetilde{Z}_{T\wedge\widetilde{\tau}_{t,\texttt{{\rm{x}}}}}\right]
    -u(t,\texttt{{\rm{x}}})\right|
    \nonumber\\[3pt]
   =& \Bigg|\mathbf{E}\Bigg\{\sum_{i=0}^{n_{T\wedge\widetilde{\tau}_{t,\texttt{{\rm{x}}}}}-1}
   u(\tau_{i+1},\widetilde{\texttt{{\rm{X}}}}_{\tau_{i+1}{-}})\widetilde{Y}_{\tau_{i+1}}
   +\widetilde{Z}_{\tau_{i+1}}-u(\tau_{i},\widetilde{\texttt{{\rm{X}}}}_{\tau_{i}})
   \widetilde{Y}_{\tau_{i}}-\widetilde{Z}_{\tau_{i}}
   \nonumber\\[3pt]
   &+\widetilde{Y}_{\tau_{i+1}}\Big[u(\tau_{i+1},\widetilde{\texttt{{\rm{X}}}}_{\tau_{i+1}})
   -u(\tau_{i+1},\widetilde{\texttt{{\rm{X}}}}_{\tau_{i+1}{-}})\Big]\Bigg\}\Bigg|
   \nonumber\\[3pt]
   =& \Bigg|\mathbf{E}\Bigg\{\sum_{i=0}^{n_{T\wedge\widetilde{\tau}_{t,\texttt{{\rm{x}}}}}-1}
   \widetilde{Y}_{\tau_{i}}\left[u(\tau_{i+1},\widetilde{\texttt{{\rm{X}}}}_{\tau_{i+1}{-}})
   -u(\tau_{i},\widetilde{\texttt{{\rm{X}}}}_{\tau_{i}})
   +c(\tau_{i},\widetilde{\texttt{{\rm{X}}}}_{\tau_{i}})
   u(\tau_{i+1},\widetilde{\texttt{{\rm{X}}}}_{\tau_{i+1}{-}})\Delta{\tau_{i}}\right]
   \nonumber\\[3pt]
   &+\widetilde{Y}_{\tau_{i+1}}\Big[u(\tau_{i+1},\widetilde{\texttt{{\rm{X}}}}_{\tau_{i+1}})
   -u(\tau_{i+1},\widetilde{\texttt{{\rm{X}}}}_{\tau_{i+1}{-}})\Big]
   +f(\tau_{i},\widetilde{\texttt{{\rm{X}}}}_{\tau_{i}})\Delta{\tau_{i}}\Bigg\}\Bigg|
   \nonumber\\[3pt]
   =& \Bigg|\mathbf{E}\Bigg[\sum_{i=0}^{n_{T\wedge\widetilde{\tau}_{t,\texttt{{\rm{x}}}}}-1}
     \widetilde{Y}_{\tau_{i}}\int_{\tau_{i}}^{\tau_{i+1}}\frac{\partial}{\partial{\eta}}
   u(\eta,\widetilde{\texttt{{\rm{X}}}}_{\eta-})
   +\sum_{j=1}^{n}b_{j}(\tau_{i},\widetilde{\texttt{{\rm{X}}}}_{\tau_{i}})
   \frac{\partial}{\partial{x}_{j}}u(\eta,\widetilde{\texttt{{\rm{X}}}}_{\eta-})
   {\rm d}\eta
   \nonumber\\[3pt]
   &+\widetilde{Y}_{\tau_{i}}c(\tau_{i},\widetilde{\texttt{{\rm{X}}}}_{\tau_{i}})
   u(\tau_{i+1},\widetilde{\texttt{{\rm{X}}}}_{\tau_{i+1}{-}})\Delta{\tau_{i}}
   +\widetilde{Y}_{\tau_{i}}f(\tau_{i},\widetilde{\texttt{{\rm{X}}}}_{\tau_{i}})\Delta{\tau_{i}}\Bigg]
   \nonumber\\[3pt]
   &+\mathbf{E}\left[\int_{t}^{T\wedge\widetilde{\tau}_{t,\texttt{{\rm{x}}}}}
   \widetilde{Y}_{\eta}\int_{|\texttt{{\rm{y}}}|>\varepsilon}\left[
   u(\eta,\widetilde{\texttt{{\rm{X}}}}_{\eta-}+\texttt{{\rm{y}}})
   -u(\eta,\widetilde{\texttt{{\rm{X}}}}_{\eta-})\right]N({\rm d}\eta,{\rm d}\texttt{{\rm{y}}})
   \right]\Bigg|
   \nonumber\\[3pt]
   =&\Bigg|\mathbf{E}\Bigg\{\sum_{i=0}^{n_{T\wedge\widetilde{\tau}_{t,\texttt{{\rm{x}}}}}-1}
     \widetilde{Y}_{\tau_{i}}\mathbf{E}\Bigg[\int_{\tau_{i}}^{\tau_{i+1}}\frac{\partial}{\partial{\eta}}
   u(\eta,\widetilde{\texttt{{\rm{X}}}}_{\eta-})
    +\sum_{j=1}^{n}b_{j}(\tau_{i},\widetilde{\texttt{{\rm{X}}}}_{\tau_{i}})
   \frac{\partial}{\partial{x}_{j}}u(\eta,\widetilde{\texttt{{\rm{X}}}}_{\eta-}){\rm d}\eta
   \nonumber\\[3pt]
   &+\int_{\tau_{i}}^{\tau_{i+1}}\int_{|\texttt{{\rm{y}}}|>\varepsilon}\left[
   u(\eta,\widetilde{\texttt{{\rm{X}}}}_{\eta-}+\texttt{{\rm{y}}})
   -u(\eta,\widetilde{\texttt{{\rm{X}}}}_{\eta-})\right]
   \nu({\rm d}\texttt{{\rm{y}}}){\rm d}\eta
   \nonumber\\[3pt]
   &+c(\tau_{i},\widetilde{\texttt{{\rm{X}}}}_{\tau_{i}})\Delta{\tau_{i}}
   \int_{\tau_{i}}^{\tau_{i+1}}\int_{|\texttt{{\rm{y}}}|>\varepsilon}\left[
   u(\eta,\widetilde{\texttt{{\rm{X}}}}_{\eta-}+\texttt{{\rm{y}}})
   -u(\eta,\widetilde{\texttt{{\rm{X}}}}_{\eta-})\right]
   \nu({\rm d}\texttt{{\rm{y}}}){\rm d}\eta
   \nonumber\\[3pt]
   &+c(\tau_{i},\widetilde{\texttt{{\rm{X}}}}_{\tau_{i}})
   u(\tau_{i+1},\widetilde{\texttt{{\rm{X}}}}_{\tau_{i+1}{-}})\Delta{\tau_{i}}
   +f(\tau_{i},\widetilde{\texttt{{\rm{X}}}}_{\tau_{i}})\Delta{\tau_{i}}
   \Big{|}\mathcal{F}_{\tau_{i}}\Bigg]\Bigg\}\Bigg|
   \nonumber\\[3pt]
   =& \left|\mathbf{E}\left\{\sum_{i}\widetilde{Y}_{\tau_{i}}\mathbf{E}\left[
   B_{i}\big{|}\mathcal{F}_{\tau_{i}}\right]\right\}\right|,
\end{align}
 where $\Delta\tau_{i}=\tau_{i+1}-\tau_{i}$.

 Since $u(t,\texttt{{\rm{x}}})$ satisfies (\ref{eq:terminalboundaryproblem}), we obtain
 \begin{equation}\label{eq:errorestimate2}
  \begin{aligned}
   &\mathbf{E}\left[B_{i}\big{|}\mathcal{F}_{\tau_{i}}\right]
   \\[3pt]
   =&\,\mathbf{E}\bigg\{
   \int_{\tau_{i}}^{\tau_{i+1}}\sum_{j=1}^{n}
   \left[b_{j}(\tau_{i},\widetilde{\texttt{{\rm{X}}}}_{\tau_{i}})
   -b_{j}(\eta,\widetilde{\texttt{{\rm{X}}}}_{\eta-})\right]
   \frac{\partial u}{\partial{x_{j}}}(\eta,\widetilde{\texttt{{\rm{X}}}}_{\eta-}){\rm d}\eta
   \\[3pt]
   &-\int_{\tau_{i}}^{\tau_{i+1}}\int_{|\texttt{{\rm{y}}}|<\varepsilon}
   \left[u(\eta,\widetilde{\texttt{{\rm{X}}}}_{\eta-}
   +\texttt{{\rm{y}}})-u(\eta,\widetilde{\texttt{{\rm{X}}}}_{\eta-})
   -I_{|\texttt{{\rm{y}}}|<1}(\nabla{
   u(\eta,\widetilde{\texttt{{\rm{X}}}}_{\eta-})},\texttt{{\rm{y}}})\right]
   \nu({\rm d}\texttt{{\rm{y}}}){\rm d}\eta
   \\[3pt]
   &+c(\tau_{i},\widetilde{\texttt{{\rm{X}}}}_{\tau_{i}})\Delta{\tau_{i}}
   \int_{\tau_{i}}^{\tau_{i+1}}\int_{|\texttt{{\rm{y}}}|>\varepsilon}\left[
   u(\eta,\widetilde{\texttt{{\rm{X}}}}_{\eta-}+\texttt{{\rm{y}}})
   -u(\eta,\widetilde{\texttt{{\rm{X}}}}_{\eta-})\right]
   \nu({\rm d}\texttt{{\rm{y}}}){\rm d}\eta
   \\[3pt]
   &+\int_{\tau_{i}}^{\tau_{i+1}}c(\tau_{i},\widetilde{\texttt{{\rm{X}}}}_{\tau_{i}})
   u(\tau_{i+1},\widetilde{\texttt{{\rm{X}}}}_{\tau_{i+1}-})
   -c(\eta,\widetilde{\texttt{{\rm{X}}}}_{\eta-})
   u(\eta,\widetilde{\texttt{{\rm{X}}}}_{\eta-}){\rm d}\eta
   \\[3pt]
   &+\int_{\tau_{i}}^{\tau_{i+1}}f(\tau_{i},\widetilde{\texttt{{\rm{X}}}}_{\tau_{i}})
   -f(\eta,\widetilde{\texttt{{\rm{X}}}}_{\eta-}){\rm d}\eta
    \Big{|}\mathcal{F}_{\tau_{i}}\bigg\}
    \\[3pt]
   =&\,\widetilde{Y}_{\tau_{i}}\mathbf{E}\left[B_{i1}+B_{i2}+B_{i3}+B_{i4}+B_{i5}
   \big{|}\mathcal{F}_{\tau_{i}}\right].
  \end{aligned}
 \end{equation}
 For the first term, one has
 \begin{equation}\label{eq:B1}
  \begin{aligned}
  &\left|\mathbf{E}\left[B_{i1}
   \big{|}\mathcal{F}_{\tau_{i}}\right]\right|
   \\[3pt]
  \leq &\, \mathbf{E}\left[
  \sum_{j=1}^{n}\int_{\tau_{i}}^{\tau_{i+1}}
  \left|b_{j}(\tau_{i},\widetilde{\texttt{{\rm{X}}}}_{\tau_{i}})
  -b_{j}(\eta,\widetilde{\texttt{{\rm{X}}}}_{\eta-})\right|
  \left|\frac{\partial u}{\partial{x_{j}}}(\eta,\widetilde{\texttt{{\rm{X}}}}_{\eta-})\right|
  {\rm d}\eta\Big{|}\mathcal{F}_{\tau_{i}}\right]
  \\[3pt]
  \leq& \,|u|_{\beta}\mathbf{E}\left[\sum_{j=1}^{n}\int_{\tau_{i}}^{\tau_{i+1}}
  \left|b_{j}(\tau_{i},\widetilde{\texttt{{\rm{X}}}}_{\tau_{i}})
  -b_{j}(\eta,\widetilde{\texttt{{\rm{X}}}}_{\tau_{i}})\right|
  +\left|b_{j}(\eta,\widetilde{\texttt{{\rm{X}}}}_{\tau_{i}})
  -b_{j}(\eta,\widetilde{\texttt{{\rm{X}}}}_{\eta-})\right|{\rm d}\eta
  \Big{|}\mathcal{F}_{\tau_{i}}\right]
  \\[3pt]
  \leq & \,|u|_{\beta}\mathbf{E}\left[\sum_{j=1}^{n}
  C_{1}\int_{\tau_{i}}^{\tau_{i+1}}(\eta-\tau_{i})\,{\rm d}\eta
  +C_{2}\int_{\tau_{i}}^{\tau_{i+1}}\left|\widetilde{\texttt{{\rm{X}}}}_{\tau_{i}}
  -\widetilde{\texttt{{\rm{X}}}}_{\eta-}\right|{\rm d}\eta
  \Big{|}\mathcal{F}_{\tau_{i}}\right]
  \\
  \leq & \,C_{1}|u|_{\beta}\mathbf{E}
  \left[\int_{\tau_{i}}^{\tau_{i+1}}(\eta-\tau_{i})\,{\rm d}\eta
  \Big{|}\mathcal{F}_{\tau_{i}}\right]
  \\[3pt]
  \leq &\, C_{1} \mathbf{E}\left[(\tau_{i+1}-\tau_{i})^2
  \Big{|}\mathcal{F}_{\tau_{i}}\right].
  \end{aligned}
 \end{equation}

 Clearly, for the second term, it holds that
 \begin{equation}\label{eq:B2}
  \begin{aligned}
  &\left|\mathbf{E}\left[B_{i2}\big{|}\mathcal{F}_{\tau_{i}}\right]\right|
   \\[3pt]
  \leq&\, \mathbf{E}\left[\int_{\tau_{i}}^{\tau_{i+1}}\int_{|\texttt{{\rm{y}}}|<\varepsilon}\int_{0}^{1}
  \left|\left(\nabla{u(\eta,\widetilde{\texttt{{\rm{X}}}}_{\eta}+\alpha\texttt{{\rm{y}}}})
  -\nabla{u(\eta,\widetilde{\texttt{{\rm{X}}}}_{\eta})},\texttt{{\rm{y}}}\right)\right|{\rm d}\alpha\,
  \nu({\rm d}\texttt{{\rm{y}}})\,{\rm d}\eta
  \Big{|}\mathcal{F}_{\tau_{i}}\right]
  \\[3pt]
  \leq&\, C_{2} \mathbf{E}\left[(\tau_{i+1}-\tau_{i})\Big{|}\mathcal{F}_{\tau_{i}}
  \right] |u|_{\beta}\int_{|\texttt{{\rm{y}}}|<\varepsilon} |\texttt{{\rm{y}}}|^{-n-2s+\beta}
  {\rm d}\texttt{{\rm{y}}}
  \\[3pt]
  \leq&\,  C_{2} \mathbf{E}\left[\tau_{i+1}-\tau_{i}\Big{|}\mathcal{F}_{\tau_{i}}
  \right]\varepsilon^{\beta-2s}.
  \end{aligned}
 \end{equation}

 Next, we estimate the forth term.
 \begin{equation}\label{eq:B4}
  \begin{aligned}
   &\left|\mathbf{E}\left[B_{i4}\big{|}\mathcal{F}_{\tau_{i}}\right]\right|
   \\[3pt]
   \leq&\left|\mathbf{E}\left[\int_{\tau_{i}}^{\tau_{i+1}}
   c(\tau_{i},\widetilde{\texttt{{\rm{X}}}}_{\tau_{i}})u(\tau_{i+1},\widetilde{\texttt{{\rm{X}}}}_{\tau_{i+1}-})
   -c(\eta,\widetilde{\texttt{{\rm{X}}}}_{\eta})u(\eta,\widetilde{\texttt{{\rm{X}}}}_{\eta-})
   {\rm d}\eta\Big{|}\mathcal{F}_{\tau_{i}}\right]\right|
   \\
   \leq&\bigg|\mathbf{E}\bigg[\int_{\tau_{i}}^{\tau_{i+1}}
   \left[c(\tau_{i},\widetilde{\texttt{{\rm{X}}}}_{\tau_{i}})
   -c(\eta,\widetilde{\texttt{{\rm{X}}}}_{\eta-})\right]
   u(\eta,\widetilde{\texttt{{\rm{X}}}}_{\eta})
   \\[3pt]
   &+\left[u(\tau_{i+1},\widetilde{\texttt{{\rm{X}}}}_{\tau_{i+1}-})
   -u(\eta,\widetilde{\texttt{{\rm{X}}}}_{\eta-})\right]
   c(\tau_{i},\widetilde{\texttt{{\rm{X}}}}_{\tau_{i}}){\rm d}\eta
   \Big{|}\mathcal{F}_{\tau_{i}}\bigg]\bigg|
   \\[3pt]
   \leq&C_{1}\mathbf{E}\left[(\tau_{i+1}-\tau_{i})^2
   \big{|}\mathcal{F}_{\tau_{i}}\right].
  \end{aligned}
 \end{equation}
For the remaining part, we can easily get
\begin{equation}\label{eq:B35}
 \left|\mathbf{E}\left[B_{i3}+B_{i5}\big{|}\mathcal{F}_{\tau_{i}}\right]\right|
 \leq C_{1}\mathbf{E}\left[(\tau_{i+1}-\tau_{i})^2
   \big{|}\mathcal{F}_{\tau_{i}}\right].
\end{equation}
Combining inequalities
(\ref{eq:errorestimate1})-(\ref{eq:B35}) yields
 \begin{equation}
  \begin{aligned}
   &\left|\mathbf{E}\left[u(T\wedge\widetilde{\tau}_{t,\texttt{{\rm{x}}}},
   \widetilde{\texttt{{\rm{X}}}}_{T\wedge\widetilde{\tau}_{t,\texttt{{\rm{x}}}}})
   \widetilde{Y}_{T\wedge\widetilde{\tau}_{t,\texttt{{\rm{x}}}}}
   +\widetilde{Z}_{T\wedge\widetilde{\tau}_{t,\texttt{{\rm{x}}}}}\right]
    -u(t,\texttt{{\rm{x}}})\right|
    \\[3pt]
   \leq&\,\mathbf{E}\left\{\sum_{i}|\widetilde{Y}_{\tau_{i}}|\mathbf{E}\left[\left|
   B_{i1}\right|+\left|B_{i2}\right|+\left|B_{i3}\right|+\left|B_{i4}\right|
   \big{|}\mathcal{F}_{\tau_{i}}\right]\right\}.
   \\
   \leq&\,\mathbf{E}\left\{\sup_{i}|\widetilde{Y}_{\tau_{i}}|\left(\sum_{i}
   \mathbf{E}\left[C_{1}(\tau_{i+1}-\tau_{i})^2
   +C_{2}\varepsilon^{\beta-2s}(\tau_{i+1}-\tau_{i})
   \big{|}\mathcal{F}_{\tau_{i}}\right]
   \right)\right\}
   \\
   \leq&\, C_{1}\varepsilon^{2s}+C_{2}\varepsilon^{\beta-2s}.
  \end{aligned}
 \end{equation}
Thus, the proof is completed.
\end{proof}

\begin{remark}
 The probabilistic approach can be also applied to the Dirichlet problem for
 the steady-state equation
 \begin{equation}\label{eq:boundaryproblem}
 \left\{
 \begin{aligned}
  &-(-\Delta)^su+\sum_{i=1}^{n}b_{i}(\texttt{{\rm{x}}})
  \frac{\partial u}{\partial x_{i}}+c(\texttt{{\rm{x}}})u+
  f(\texttt{{\rm{x}}})=0, \quad &&\texttt{{\rm{x}}}\in \mathbb{D},
  \\[3pt]
  &u(\texttt{{\rm{x}}})=g(\texttt{{\rm{x}}}), \quad &&\texttt{{\rm{x}}}\in
  \mathbb{R}^{n}\setminus\mathbb{D}.
 \end{aligned}
 \right.
\end{equation}
The solution has the probabilistic representation below
 \begin{equation}\label{eq:stableSde}
   u(\texttt{{\rm{x}}})
   =\mathbf{E}\left[
   g(\texttt{{\rm{X}}}_{\tau_{\texttt{{\rm{x}}}}}^{\texttt{{\rm{x}}}})
   Y_{\tau_{\texttt{{\rm{x}}}}}^{\texttt{{\rm{x}}},1}
   +Z_{\tau_{\texttt{{\rm{x}}}}}^{\texttt{{\rm{x}}},1,0}\right],
 \end{equation}
 where $\texttt{{\rm{X}}}_{r}^{\texttt{{\rm{x}}}}$,
 $Y_{r}^{\texttt{{\rm{x}}},1}$, and $Z_{r}^{\texttt{{\rm{x}}},1,0}\,(r\geq 0)$
  are the solution of the Cauchy problem for the
 systems of SDEs,
 \begin{equation}\label{eq:SDEs2}
 \left\{
 \begin{aligned}
  &{\rm d} \texttt{{\rm{X}}}_{r}=b(\texttt{{\rm{X}}}_{r}){\rm d}r
  +{\rm d}L_{r},
  \qquad &&\texttt{{\rm{X}}}_{0}=\texttt{{\rm{x}}},
  \\[3pt]
  &{\rm d}Y_{r}=c(\texttt{{\rm{X}}}_{r})Y_{r}{\rm d}r, \qquad &&Y_{0}=1,
  \\[3pt]
  &{\rm d}Z_{r}=f(\texttt{{\rm{X}}}_{r})Y_{r}{\rm d}r,  \qquad &&Z_{0}=0.
 \end{aligned}
 \right.
\end{equation}
Here $\texttt{{\rm{x}}}\in\mathbb{D}$ and $\tau_{\texttt{{\rm{x}}}}$
is the first exit time of the trajectory $\texttt{{\rm{X}}}_{r}^{\texttt{{\rm{x}}}}$
to the boundary of $\mathbb{D}$.
 We apply the jump-adapted Euler scheme to system {\rm(\ref{eq:SDEs2})}
 and get
 \begin{equation}\label{eq:approSDEs13}
 \left\{
 \begin{aligned}
  &\widetilde{\texttt{{\rm{X}}}}_{\tau_{i+1}}=\widetilde{\texttt{{\rm{X}}}}_{\tau_{i}}
  +b(\widetilde{\texttt{{\rm{X}}}}_{\tau_{i}})(\tau_{i+1}-\tau_{i})
  +L_{\tau_{i+1}-\tau_{i}}^{\varepsilon},
  \\[3pt]
  &\widetilde{Y}_{\tau_{i+1}}=\widetilde{Y}_{\tau_{i}}+c(\widetilde{\texttt{{\rm{X}}}}_{\tau_{i}})
  \widetilde{Y}_{\tau_{i}}(\tau_{i+1}-\tau_{i}),
  \\[3pt]
  &\widetilde{Z}_{\tau_{i+1}}=\widetilde{Z}_{\tau_{i}}+f(\widetilde{\texttt{{\rm{X}}}}_{\tau_{i}})
  \widetilde{Y}_{\tau_{i}}(\tau_{i+1}-\tau_{i}),
 \end{aligned}
 \right.
\end{equation}
 where $\tau_{i},\,(i=0,1,\ldots, n_{\tau_{\texttt{{\rm{x}}}}})$ are jump times
 of $L_{t}^{\varepsilon}$ in {\rm(\ref{eq:approLevy})}.
 Thus we can approximate {\rm(\ref{eq:stableSde})} by
 \begin{equation}
   \mathbf{E}\left[
   g(\widetilde{\texttt{{\rm{X}}}}_{\widetilde{\tau}_{\texttt{{\rm{x}}}}})
   \widetilde{Y}_{\widetilde{\tau}_{\texttt{{\rm{x}}}}}
   +\widetilde{Z}_{\widetilde{\tau}_{\texttt{{\rm{x}}}}}\right],
 \end{equation}
 where $\widetilde{\tau}_{\texttt{{\rm{x}}}}$ is the exit time of the
 trajectory $\widetilde{\texttt{{\rm{X}}}}_{r}^{\texttt{{\rm{x}}}}$
 to the boundary of $\mathbb{D}$.

 If $b_{i}(\texttt{{\rm{x}}}),\,(i=1,2,\ldots,n)$, $f(\texttt{{\rm{x}}})$, and
 $c(\texttt{{\rm{x}}})$, together with their first derivatives with respect to
 $\texttt{{\rm{x}}}$ being uniformly bounded, then we have
 \begin{equation}
   \left|\mathbf{E}\left[g(\widetilde{\texttt{{\rm{X}}}}_{\widetilde{\tau}_{\texttt{{\rm{x}}}}})
   \widetilde{Y}_{\widetilde{\tau}_{\texttt{{\rm{x}}}}}
   +\widetilde{Z}_{\widetilde{\tau}_{\texttt{{\rm{x}}}}}\right]-u(\texttt{{\rm{x}}})\right|
   \leq C_{1}(\varepsilon^{2s}+\varepsilon^{\beta-2s}),
 \end{equation}
 where $C_{1}$ is a constant.
\end{remark}

\section{High-order scheme and error estimate}
In this section, we will give a higher-order jump-adapted scheme under Assumption
I with $\beta\in[2,3]$ and Assumption II. By the idea of \cite{Asmussen&Rosinski2001},
we approximate the symmetric $2s$-stable process
$L_{t}$ in (\ref{levyprocess}) by
\begin{equation}
 \widetilde{L}_{t}^{\varepsilon}=\sigma_{\varepsilon}W_{t}
 +\int_{0}^{t}\int_{\varepsilon<|\texttt{{\rm{y}}}|<1}\texttt{{\rm{y}}}
 \widetilde{N}({\rm d}\eta,{\rm d}\texttt{{\rm{y}}})+\int_{0}^{t}\int_{|\texttt{{\rm{y}}}|\geq1}
 \texttt{{\rm{y}}} N({\rm d}\eta,{\rm d}\texttt{{\rm{y}}})
 =\sigma_{\varepsilon}W_{t}+L_{t}^{\varepsilon},
\end{equation}
where $W_{t}=(W_{t}^{1},W_{t}^{2},\ldots,W_{t}^{n})$ is an $n$-dimensional Brownian motion,
\begin{equation}
  \sigma_{\varepsilon}\sigma_{\varepsilon}^{T}=C(n,s)\left(\int_{|\texttt{{\rm{y}}}|<\varepsilon}
  y_{j_1}y_{j_2}\nu({\rm d}\texttt{{\rm{y}}})\right)_{1\leq j_{1},j_{2}\leq n},
\end{equation}
and
\begin{equation}
 \sigma_{\varepsilon}=\left[C(n,s)\frac{\varepsilon^{2-2s}}{2-2s}
 \frac{\pi^{n/2}}{\Gamma(n/2+1)}\right]^{1/2}\mathrm{I}:=\overline{\sigma}_{\varepsilon}\mathrm{I}.
\end{equation}
Here $\mathrm{I}$ is an identity matrix of dimension $N$.

We have the weak convergent order in the following lemma.
\begin{lemma}
 Let $s\in(0,1)$, $h\in C^{\beta}(\mathbb{R}^{n})$, $\beta\in[2,3]$. Then there is a constant
 $C_{1}$, such that for $0\leq t'\leq t\leq T$, we have
 \begin{equation}
  \left|\mathbf{E}\left[h(L_{t-t'})-h(\widetilde{L}_{t-t'}^{\varepsilon})
  \right]\right|
  \leq C_{1}\varepsilon^{[\beta]^{-}-2s}|h|_{\beta}(t-t').
 \end{equation}
\end{lemma}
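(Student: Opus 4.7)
My plan is to mirror the Kolmogorov-equation strategy used in the proof of Lemma 3.1, but to push the Taylor expansion to second order and exploit the fact that $\sigma_\varepsilon$ is engineered precisely so that the Brownian correction matches the covariance of the removed small jumps.

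First, I would introduce the candidate backward solution
\begin{equation*}
v(\eta, x) = \mathbf{E}\bigl[h(x + \widetilde{L}^\varepsilon_{t} - \widetilde{L}^\varepsilon_{\eta})\bigr], \qquad 0 \le \eta \le t,
\end{equation*}
and show by It\^o's formula that it satisfies
\begin{equation*}
\partial_\eta v + \tfrac{1}{2}\overline{\sigma}_\varepsilon^{2}\Delta v + \int_{|y|>\varepsilon}\!\bigl[v(\eta,x+y) - v(\eta,x) - I_{\{|y|<1\}}(\nabla v(\eta,x),y)\bigr]\nu(\mathrm{d}y) = 0,
\end{equation*}
with terminal data $v(t,x)=h(x)$. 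Standard semigroup/convolution arguments then yield $|v|_\beta \le |h|_\beta$, which is the analogue of the regularity input used in Lemma 3.1.

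Next I would apply It\^o's formula to $v(\eta, L_\eta - L_{t'})$ on $[t',t]$, against the \emph{full} process $L$, and substitute the backward equation above to cancel $\partial_\eta v$. The martingale pieces vanish in expectation, leaving the discrepancy between the generator of $L$ and that of $\widetilde{L}^\varepsilon$:
\begin{equation*}
\mathbf{E}\bigl[h(L_{t-t'}) - h(\widetilde{L}^\varepsilon_{t-t'})\bigr] = \mathbf{E}\!\int_{t'}^{t}\!\!\left\{\int_{|y|<\varepsilon}\!\!\bigl[v(\eta,\cdot+y)-v(\eta,\cdot)-(\nabla v(\eta,\cdot),y)\bigr]\nu(\mathrm{d}y) - \tfrac{1}{2}\overline{\sigma}_\varepsilon^{2}\Delta v(\eta,\cdot)\right\}\mathrm{d}\eta .
\end{equation*}
The crucial step is a second-order Taylor expansion
\begin{equation*}
v(\eta,x+y) - v(\eta,x) - (\nabla v(\eta,x),y) = \tfrac{1}{2}(y, D^{2}v(\eta,x)\,y) + R(\eta,x,y).
\end{equation*}
By the definition of $\sigma_\varepsilon$, the integral of the quadratic term against $\nu$ over $|y|<\varepsilon$ is identically $\tfrac{1}{2}\overline{\sigma}_\varepsilon^{2}\Delta v$, so the two leading contributions cancel exactly. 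Only $R$ survives, and the H\"older/Taylor bound coming from $v\in C^\beta$ with $\beta\in[2,3]$ gives $|R(\eta,x,y)|\le C|h|_\beta|y|^{\beta^{-}}$ in a neighbourhood of the origin. Since
\begin{equation*}
\int_{|y|<\varepsilon} |y|^{\beta^{-}} \nu(\mathrm{d}y) \le C\,\varepsilon^{[\beta]^{-}-2s},
\end{equation*}
integrating in $\eta$ over $[t',t]$ produces the claimed bound $C_{1}\varepsilon^{[\beta]^{-}-2s}|h|_\beta (t-t')$.

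The main obstacle I anticipate is the cancellation bookkeeping at the top of the range $\beta=3$: the standard $C^{\beta}$ seminorm there controls only a H\"older modulus of second derivatives, so a naive Taylor remainder picks up a logarithmic or borderline factor unless one works with $[\beta]^{-}$ (which is evidently the point of that notation), or unless one invokes a smoothing estimate for the truncated semigroup to trade regularity for a vanishing remainder. A secondary technical issue is justifying the application of It\^o's formula with an integrand driven by the untruncated $2s$-stable process $L$: since $L$ has non-integrable large jumps, one must localise by stopping at exits from large balls and pass to the limit via dominated convergence, exactly as was implicit in Lemma 3.1. Everything else is a direct strengthening of the argument already given for the lower-regularity case.
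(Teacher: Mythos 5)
Your proposal is correct and is essentially the proof the paper intends: the paper in fact omits the argument, deferring to Lemma 6 of Mikulevicius (2012), and that argument is precisely your scheme --- backward Kolmogorov equation for $\widetilde{L}^{\varepsilon}$ with $|v|_{\beta}\le|h|_{\beta}$, It\^{o}'s formula along the full process $L$ so that only the generator mismatch survives, exact cancellation of the second-order Taylor term against $\tfrac{1}{2}\overline{\sigma}_{\varepsilon}^{2}\Delta v$ by the defining covariance-matching property of $\sigma_{\varepsilon}$, and a H\"older/Taylor bound on the remainder integrated against $\nu$ over $\{|y|<\varepsilon\}$. This is the direct second-order strengthening of the paper's own proof of Lemma 3.1, so nothing further is needed.
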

\noindent The proof is similar to that of Lemma 6 in \cite{Mikulevicius2012},
so we omit it here.

Since the exact exit time of  stochastic process with Brownian motion is hard to approximate,
we replace $\sigma_{\varepsilon}W_{t}$ by $\sigma_{\varepsilon}\sqrt{t}\xi$,
where $\xi=(\xi_{1},\xi_{2},\cdots,\xi_{n})$ and $\mathrm{P}(\xi_{1}=\pm1)=\mathrm{P}(\xi_{2}=\pm1)
=\cdots=\mathrm{P}(\xi_{n}=\pm1)=\frac{1}{2}$. Consider the following high-order jump-adapted
time discretization:
\begin{equation}
 \left\{
 \begin{aligned}
 &\tau_{0}=t,
 \\
 &{\tau}_{i+1}=\inf\{t>{\tau_{i}}:\Delta{L_{t}^{\varepsilon}}
 :=L_{t}^{\varepsilon}-L_{t-}^{\varepsilon}\neq 0\}\wedge
 \overline{\tau}_{t,\texttt{{\rm{x}}}}\wedge{T},
 \end{aligned}
 \right.
\end{equation}
where $\overline{\tau}_{t,\texttt{{\rm{x}}}}=\left\{r\geq{t},
\overline{\texttt{{\rm{X}}}}_{\eta}\notin{\mathbb{D}}\right\}$ and
$\overline{\texttt{{\rm{X}}}}_{\eta}$ is given by the following system.
For $r\in[\tau_{i},\tau_{i+1})$, define
\begin{equation}\label{eq:approSDEs2}
 \left\{
 \begin{aligned}
  &\overline{\texttt{{\rm{X}}}}_{r}
  =\overline{\texttt{{\rm{X}}}}_{\tau_{i}}
  +b({\tau_{i}},\overline{\texttt{{\rm{X}}}}_{{\tau}_{i}})(r-{\tau}_{i})
  +\sigma_{\varepsilon}\sqrt{r-{\tau}_{i}}\,\xi,
  \\[3pt]
  &\overline{Y}_{r}=\overline{Y}_{{\tau}_{i}}
  +c({\tau}_{i},\overline{\texttt{{\rm{X}}}}_{{\tau}_{i}})
  \overline{Y}_{{\tau}_{i}}(r-{\tau}_{i}),
  \\[3pt]
  &\overline{Z}_{r}=\overline{Z}_{{\tau}_{i}}
  +f({\tau}_{i},\overline{\texttt{{\rm{X}}}}_{{\tau}_{i}})
  \overline{Y}_{{\tau}_{i}}(r-{\tau}_{i}),
 \end{aligned}
 \right.
\end{equation}
where the starting point $\overline{\texttt{{\rm{X}}}}_{t}=\texttt{{\rm{x}}}$,
$\overline{Y}_{t}=1$, and $\overline{Z}_{t}=0$. For $r=\tau_{i+1}$, we define
\begin{equation}\label{eq:approSDEs22}
 \left\{
 \begin{aligned}
  &\overline{\texttt{{\rm{X}}}}_{\tau_{i+1}}
  =\overline{\texttt{{\rm{X}}}}_{\tau_{i+1}-}
  +\Delta{L_{\tau_{i+1}}^{\varepsilon}}
  \\[3pt]
  &\overline{Y}_{{\tau}_{i+1}}=\overline{Y}_{{\tau}_{i}}
  +c({\tau}_{i},\overline{\texttt{{\rm{X}}}}_{{\tau}_{i}})
  \overline{Y}_{{\tau}_{i}}({\tau}_{i+1}-{\tau}_{i}),
  \\[3pt]
  &\overline{Z}_{{\tau}_{i+1}}=\overline{Z}_{{\tau}_{i}}
  +f({\tau}_{i},\overline{\texttt{{\rm{X}}}}_{{\tau}_{i}})
  \overline{Y}_{{\tau}_{i}}({\tau}_{i+1}-{\tau}_{i}).
 \end{aligned}
 \right.
\end{equation}
 Thus, we approximate the solution $u(t,\texttt{{\rm{x}}})$ of equation (\ref{eq:terminalboundaryproblem})
by
\begin{equation}\label{eq:approSolution2}
 \mathbf{E}\left[
   u(T\wedge\overline{\tau}_{t,\texttt{{\rm{x}}}},
   \overline{\texttt{{\rm{X}}}}_{T\wedge\overline{\tau}_{t,\texttt{{\rm{x}}}}})
   \overline{Y}_{T\wedge\overline{\tau}_{t,\texttt{{\rm{x}}}}}
   +\overline{Z}_{T\wedge\overline{\tau}_{t,\texttt{{\rm{x}}}}}\right].
\end{equation}
We summarize the method in Algorithm 2.
\begin{algorithm}[t]\label{algo:sde2}
\caption{Algorithm for (\ref{eq:approSolution2})} %算法的名字
\begin{algorithmic}[1]
\State
{\bf initialize:}
$\tau_{0}=t$, $\overline{\texttt{{\rm{X}}}}_{\tau_{0}}=\texttt{{\rm{x}}}$,
$\overline{Y}_{\tau_{0}}=1$, and $\overline{Z}_{\tau_{0}}=0$, $i=0$.% \State 后写一般语句
\While{$\tau_{i}<T$ and $\overline{\texttt{{\rm{X}}}}_{\tau_{i}}\in D$} % While语句，需要和EndWhile对应
　　\State {\bf Sample:} jump time $\tau$ ($\tau$ is exponentially distributed with parameter
    $\lambda_{\varepsilon}$ in (\ref{eq:jump intensity}))
    and $\xi=(\xi_{1},\,\xi_{2},\,\ldots,\xi_{n})$ with
    $\mathrm{P}(\xi_{1}=\pm1)=\mathrm{P}(\xi_{2}=\pm1)=\ldots=\mathrm{P}(\xi_{n}=\pm1)=\frac{1}{2}$.
    \\
\hspace*{0.18in} {\bf Set:} $\overline{\texttt{{\rm{X}}}}_{\tau_{i+1}^{-}}=\overline{\texttt{{\rm{X}}}}_{\tau_{i}}
  +b(\tau_{i},\overline{\texttt{{\rm{X}}}}_{\tau_{i}})\tau+\sigma_{\varepsilon}\tau^{\frac{1}{2}}\xi$.
     \If{$\tau+\tau_{i}>T$ or $\overline{\texttt{{\rm{X}}}}_{\tau_{i+1}^{-}}\notin \overline{D}$},
     % If 语句，需要和EndIf 对应
　　　　 \State {\bf Set:} $\tau_{i+1}=\sup\left\{\tau_{i}<t<T:\overline{\texttt{{\rm{X}}}}_{\tau_{i}}
           +b(\tau_{i},\overline{\texttt{{\rm{X}}}}_{\tau_{i}})(t-\tau_{i})
           +\sigma_{\varepsilon}\sqrt{t-\tau_{i}}\in \overline{D}\right\}$.
       \State {\bf Evaluate:} $\overline{\texttt{{\rm{X}}}}_{\tau_{i+1}}$ without jump,
         $\overline{Y}_{\tau_{i+1}}$, $\overline{Z}_{\tau_{i+1}}$ according to (\ref{eq:approSDEs2}).
       \State{\bf Set:} $i=i+1$.
       \Else
　　　　\State {\bf Sample:} jump size $J_{\varepsilon}$ according to density
        (\ref{eq:PossionRandomMeasure}).
         \State {\bf Evaluate:} $\overline{\texttt{{\rm{X}}}}_{\tau_{i+1}}$,
         $\overline{Y}_{\tau_{i+1}}$, $\overline{Z}_{\tau_{i+1}}$ according to (\ref{eq:approSDEs2}).
         \State{\bf Set:} $\tau_{i+1}=\tau_{i}+\tau$ and $i=i+1$.
   \EndIf \State {\bf end if}
\EndWhile \State {\bf end while}
\If{$\tau_{i+1}<T$} {\bf Set:} $\overline\tau_{t,\texttt{{\rm{x}}}}=\tau_{i+1}$,
 {\bf Evaluate:} $\chi(\overline{\tau}_{t,\texttt{{\rm{x}}}},
   \overline{\texttt{{\rm{X}}}}_{\overline{\tau}_{t,\texttt{{\rm{x}}}}})
   \overline{Y}_{\overline{\tau}_{t,\texttt{{\rm{x}}}}}
   +\overline{Z}_{\overline{\tau}_{t,\texttt{{\rm{x}}}}}$.
    \Else \quad
    {\bf Set:} $\overline\tau_{t,\texttt{{\rm{x}}}}=T$,
 {\bf Evaluate:} $g(T,\overline{\texttt{{\rm{X}}}}_{T})\overline{Y}_{T}+\overline{Z}_{T}$.
\EndIf \State {\bf end if}
\State Loop above algorithm $N$ times.
\State{\bf Evaluate:}
$u(t,\texttt{{\rm{x}}})\approx \frac{1}{N}
\sum\limits_{j=1}^{N}\left[u(T\wedge\overline{\tau}_{t,\texttt{{\rm{x}}}},
   \overline{\texttt{{\rm{X}}}}_{T\wedge\overline{\tau}_{t,\texttt{{\rm{x}}}}}^{j})
   \overline{Y}_{T\wedge\overline{\tau}_{t,\texttt{{\rm{x}}}}}^{j}
   +\overline{Z}_{T\wedge\overline{\tau}_{t,\texttt{{\rm{x}}}}}^{j}\right]$.
\end{algorithmic}
\end{algorithm}

Before proving the convergence theorem for Algorithm 2, we give the following lemma
which is similar to Lemma \ref{lemma:Ybound}.
\begin{lemma}\label{lemma:Ybound2}
 Under Assumption {\rm III}, $\overline{Y}_{\tau_{i}}$ defined
  in {\rm (\ref{eq:approSDEs22})}
 is uniformly bounded by a deterministic formula
 \begin{equation}
  |\overline{Y}_{\tau_{i}}|<e^{\overline{c}(T-t)}, \quad i=0,1,\ldots,
  n_{T\wedge{\overline{\tau}_{t,\texttt{{\rm{x}}}}}},
 \end{equation}
 where $\overline{c}=\max\limits_{(t,\texttt{{\rm{x}}})\in[0,T]\times \mathbb{D}}c(t,\texttt{{\rm{x}}})$.
\end{lemma}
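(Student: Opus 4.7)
The plan is to mirror the proof of Lemma \ref{lemma:Ybound} almost verbatim, since the recursion for $\overline{Y}_{\tau_{i+1}}$ defined in (\ref{eq:approSDEs22}) is structurally identical to the one satisfied by $\widetilde{Y}_{\tau_{i+1}}$ in (\ref{eq:approSDEs}): both are simple linear forward Euler updates driven by $c$ evaluated at the jump-grid point. The only genuine difference is that the underlying state process $\overline{\texttt{{\rm{X}}}}_{\tau_i}$ contains the additional Brownian-like increment $\sigma_{\varepsilon}\sqrt{\tau_{i+1}-\tau_i}\,\xi$, but this plays no role in bounding $\overline{Y}$ because $\overline{Y}$ depends on $\overline{\texttt{{\rm{X}}}}$ only through $c(\tau_i,\overline{\texttt{{\rm{X}}}}_{\tau_i})$, which under Assumption III is uniformly bounded by the deterministic constant $\overline{c}=\max_{(t,\texttt{{\rm{x}}})\in[0,T]\times\mathbb{D}}c(t,\texttt{{\rm{x}}})$.

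First I would factor the recursion as
\[
  \overline{Y}_{\tau_{i+1}} \;=\; \overline{Y}_{\tau_i}\bigl[1 + c(\tau_i,\overline{\texttt{{\rm{X}}}}_{\tau_i})(\tau_{i+1}-\tau_i)\bigr],
\]
take absolute values, and use boundedness of $c$ together with the elementary inequality $|1+x|\le e^{x}$ (valid for $x\ge 0$, applied with $x=\overline{c}(\tau_{i+1}-\tau_i)\ge 0$ since $\tau_{i+1}\ge \tau_i$) to obtain
\[
  |\overline{Y}_{\tau_{i+1}}| \;\le\; |\overline{Y}_{\tau_i}|\,e^{\overline{c}(\tau_{i+1}-\tau_i)}.
\]

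Next I would iterate this one-step estimate from the initial condition $\overline{Y}_{\tau_0}=\overline{Y}_{t}=1$, and telescope the sum in the exponent:
\[
  |\overline{Y}_{\tau_i}| \;\le\; \exp\!\Bigl(\overline{c}\sum_{j=0}^{i-1}(\tau_{j+1}-\tau_j)\Bigr) \;=\; e^{\overline{c}(\tau_i-t)} \;\le\; e^{\overline{c}(T-t)},
\]
where the last inequality uses $\tau_i\le T\wedge \overline{\tau}_{t,\texttt{{\rm{x}}}}\le T$. This yields the claimed deterministic bound for every index $i=0,1,\ldots,n_{T\wedge\overline{\tau}_{t,\texttt{{\rm{x}}}}}$.

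There is essentially no technical obstacle here: the argument is a two-line deterministic induction on $i$, and Assumption III enters only through the $L^{\infty}$-bound on $c$. The only point worth a brief comment is the sign convention in $\overline{c}$: if $c$ is allowed to take negative values, one either replaces $\overline{c}$ by $\max|c|$ or observes directly that $|1+c(\tau_i,\overline{\texttt{{\rm{X}}}}_{\tau_i})(\tau_{i+1}-\tau_i)|\le e^{|c|_\infty(\tau_{i+1}-\tau_i)}$, which leaves the final exponential form intact. Otherwise, the proof proceeds exactly as in Lemma \ref{lemma:Ybound} and can be written in a few lines.
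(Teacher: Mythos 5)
Your proof is correct and is essentially the argument the paper intends: the paper states Lemma \ref{lemma:Ybound2} without proof, referring to the proof of Lemma \ref{lemma:Ybound}, which is exactly the one-step factorization, the bound $|1+\overline{c}\,\Delta\tau_i|\le e^{\overline{c}\,\Delta\tau_i}$, and the telescoping you carry out. Your side remark about the sign of $\overline{c}$ is a fair point (the paper's own proof of Lemma \ref{lemma:Ybound} has the same implicit assumption), but it does not change the substance.
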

\begin{theorem}\label{thm:Errorestimate2}
 Let $\varepsilon\in(0,1)$. Under Assumption {\rm I} with $\beta\in[2,3]$ and
 {\rm III}, the error estimate
 \begin{equation}
  \left|\mathbf{E}\left[
   u(T\wedge\overline{\tau}_{t,\texttt{{\rm{x}}}},
   \overline{\texttt{{\rm{X}}}}_{T\wedge\overline{\tau}_{t,\texttt{{\rm{x}}}}})
   \overline{Y}_{T\wedge\overline{\tau}_{t,\texttt{{\rm{x}}}}}
   +\overline{Z}_{T\wedge\overline{\tau}_{t,\texttt{{\rm{x}}}}}\right]
    -u(t,\texttt{{\rm{x}}})\right|
    \leq C_{1}\varepsilon^{2s}+C_{2}\varepsilon^{\lfloor\beta\rfloor-2s}
 \end{equation}
 holds with $C_{1}$ and $C_{2}$ being constants independent of $t$ and $\texttt{{\rm{x}}}$.
\end{theorem}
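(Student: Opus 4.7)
The plan is to mirror the proof of Theorem \ref{thm:Errorestimate}, writing the global error as a telescoping sum over the jump-adapted grid,
\begin{equation*}
\left|\mathbf{E}\left[u(T\wedge\overline{\tau}_{t,\texttt{{\rm{x}}}},\overline{\texttt{{\rm{X}}}}_{T\wedge\overline{\tau}_{t,\texttt{{\rm{x}}}}})\overline{Y}_{T\wedge\overline{\tau}_{t,\texttt{{\rm{x}}}}}+\overline{Z}_{T\wedge\overline{\tau}_{t,\texttt{{\rm{x}}}}}\right]-u(t,\texttt{{\rm{x}}})\right|=\left|\mathbf{E}\sum_{i}\overline{Y}_{\tau_{i}}\mathbf{E}\left[\overline{B}_{i}\,\big|\,\mathcal{F}_{\tau_{i}}\right]\right|,
\end{equation*}
where $\overline{B}_{i}$ collects the one-step local error on $[\tau_{i},\tau_{i+1}]$. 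The essential new ingredient is that between jumps $\overline{\texttt{{\rm{X}}}}$ carries the surrogate $\sigma_{\varepsilon}\sqrt{r-\tau_{i}}\,\xi$, with $\xi$ sampled at $\tau_{i}$ and independent of $\mathcal{F}_{\tau_{i}}$ and of $\Delta\tau_{i}$. I would therefore Taylor-expand $u(\tau_{i+1}^{-},\overline{\texttt{{\rm{X}}}}_{\tau_{i+1}^{-}})$ around $(\tau_{i},\overline{\texttt{{\rm{X}}}}_{\tau_{i}})$ to second order in space and first order in time; conditional expectation then kills the odd moments $\mathbf{E}[\xi_{j}]=\mathbf{E}[\xi_{j_{1}}\xi_{j_{2}}\xi_{j_{3}}]=0$, and the quadratic term yields exactly the It\^o-type correction $\tfrac{1}{2}\mathrm{tr}(\sigma_{\varepsilon}\sigma_{\varepsilon}^{T}\nabla^{2}u)\,\Delta\tau_{i}$ that would come from continuous Brownian motion.

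Next, I would invoke \eqref{eq:terminalboundaryproblem} to trade $\partial_{t}u$ for $-(-\Delta)^{s}u+b\cdot\nabla u+cu+f$, split the fractional Laplacian into the large-jump part over $|y|\ge\varepsilon$ and the compensated small-jump part over $|y|<\varepsilon$, and cancel the large-jump part against the martingale increment from $N$ exactly as in Theorem \ref{thm:Errorestimate}. The crucial new cancellation is that the remaining small-jump compensator, combined with the Brownian It\^o correction above, reduces to
\begin{equation*}
\int_{|\texttt{{\rm{y}}}|<\varepsilon}\!\!\left[u(\eta,\overline{\texttt{{\rm{X}}}}+\texttt{{\rm{y}}})-u(\eta,\overline{\texttt{{\rm{X}}}})-(\nabla u,\texttt{{\rm{y}}})-\tfrac{1}{2}(\nabla^{2}u\,\texttt{{\rm{y}}},\texttt{{\rm{y}}})\right]\nu(d\texttt{{\rm{y}}}),
\end{equation*}
thanks to the moment-matching choice $\sigma_{\varepsilon}\sigma_{\varepsilon}^{T}=\int_{|y|<\varepsilon}yy^{T}\nu(dy)$. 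For $u\in C^{1,\beta}$ with $\beta\in[2,3]$, Taylor's theorem together with Assumption III bounds the bracketed integrand by $C|u|_{\beta}|\texttt{{\rm{y}}}|^{\lfloor\beta\rfloor}$, and since $\int_{|y|<\varepsilon}|y|^{-n-2s+\lfloor\beta\rfloor}d\texttt{{\rm{y}}}\le C\varepsilon^{\lfloor\beta\rfloor-2s}$, this piece contributes at most $C\,\mathbf{E}[\tau_{i+1}-\tau_{i}\,|\,\mathcal{F}_{\tau_{i}}]\,\varepsilon^{\lfloor\beta\rfloor-2s}$.

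The remaining ingredients are essentially bookkeeping copies of Theorem \ref{thm:Errorestimate}: the coefficient-regularity errors of $b,c,f$ are treated like $B_{i1},B_{i4},B_{i5}$ there and each bounded by $C\,\mathbf{E}[(\tau_{i+1}-\tau_{i})^{2}\,|\,\mathcal{F}_{\tau_{i}}]$ via Assumption III; the higher-order Taylor remainder in the $\xi$-expansion is controlled by $C|u|_{\beta}\,\overline{\sigma}_{\varepsilon}^{\lfloor\beta\rfloor}(\Delta\tau_{i})^{\lfloor\beta\rfloor/2}$, which through Lemma \ref{lemma:jumptime}(ii)-(iii) and $\overline{\sigma}_{\varepsilon}\sim\varepsilon^{1-s}$ contributes at most $C\varepsilon^{\lfloor\beta\rfloor-2s}$; and the cross product of $c\,\Delta\tau_{i}$ with the jump integral is again $O((\Delta\tau_{i})^{2})$. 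Summing over $i$, applying Lemma \ref{lemma:Ybound2} to pull out a uniform factor for $\overline{Y}$, and using Lemma \ref{lemma:jumptime}(iii) together with $\sum_{i}\mathbf{E}[\Delta\tau_{i}]\le T$ to bound $\sum_{i}\mathbf{E}[(\Delta\tau_{i})^{2}]\le\lambda_{\varepsilon}^{-1}\mathbf{E}[T\wedge\overline{\tau}_{t,\texttt{{\rm{x}}}}-t]\le C\varepsilon^{2s}$, gives the asserted bound $C_{1}\varepsilon^{2s}+C_{2}\varepsilon^{\lfloor\beta\rfloor-2s}$.

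The main obstacle is the careful accounting around the Rademacher surrogate $\xi$: one must verify that matching only the first and (by symmetry) the vanishing third conditional moments of $\xi$ with those of Brownian motion is sufficient at the order enforced by Assumption I. The second-moment identity built into $\sigma_{\varepsilon}\sigma_{\varepsilon}^{T}$ reproduces the It\^o correction exactly, and the Gaussian-versus-Rademacher mismatch at the fourth moment would only enter at order $\overline{\sigma}_{\varepsilon}^{4}(\Delta\tau_{i})^{2}$, for which we neither have nor need $C^{1,4}$ regularity; the discrepancy is absorbed into the Taylor remainder estimated above. A secondary subtlety is that $\sigma_{\varepsilon}\sqrt{\Delta\tau_{i}}\,\xi$ can drive the scheme out of $\mathbb{D}$ without a true jump, but this is absorbed into the definition of $\overline{\tau}_{t,\texttt{{\rm{x}}}}$ and leaves the one-step decomposition above intact.
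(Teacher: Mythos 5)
Your proposal is correct in substance and follows the same overall strategy as the paper: telescoping the global error over the jump grid, cancelling the large-jump compensator against the PDE, matching the second moment of the Rademacher surrogate to produce the It\^o-type correction $\tfrac12\mathrm{tr}(\sigma_{\varepsilon}\sigma_{\varepsilon}^{T}D^{2}u)\Delta\tau_{i}$ that compensates the small-jump integral to order $|{\texttt{\rm y}}|^{\lfloor\beta\rfloor}$, bounding the resulting term by $\varepsilon^{\lfloor\beta\rfloor-2s}\mathbf{E}[\Delta\tau_{i}\,|\,\mathcal{F}_{\tau_{i}}]$, and controlling everything else by $\mathbf{E}[(\Delta\tau_{i})^{2}\,|\,\mathcal{F}_{\tau_{i}}]\leq\lambda_{\varepsilon}^{-1}\mathbf{E}[\Delta\tau_{i}\,|\,\mathcal{F}_{\tau_{i}}]$ so that the sum is $O(\varepsilon^{2s})$. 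The one genuine organizational difference is how the Rademacher increment is handled: the paper inserts an auxiliary path $\widetilde{\texttt{\rm{X}}}_{\eta}$ driven by the actual Brownian motion $\sigma_{\varepsilon}W_{\eta-\tau_{i}}$ (its equation (\ref{eq:approSDEs3})), applies It\^o's formula to that genuine semimartingale to extract the second-order generator term cleanly, and then compares the two endpoints $u(\tau_{i+1},\overline{\texttt{\rm{X}}}_{\tau_{i+1}})$ and $u(\tau_{i+1},\widetilde{\texttt{\rm{X}}}_{\tau_{i+1}})$ by a third-order Taylor expansion using the matching first, second, and (vanishing) third moments of $\xi_{j}$ versus $W_{1}^{j}$, giving the separate term $\overline{B}_{i3}\leq C\overline{\sigma}_{\varepsilon}^{3}\mathbf{E}[(\Delta\tau_{i})^{3/2}\,|\,\mathcal{F}_{\tau_{i}}]\leq C\varepsilon^{3-2s}\mathbf{E}[\Delta\tau_{i}\,|\,\mathcal{F}_{\tau_{i}}]$ via Lemma \ref{lemma:jumptime}(ii) — exactly the remainder bound you state. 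You instead Taylor-expand directly in the increment $\sigma_{\varepsilon}\sqrt{\Delta\tau_{i}}\,\xi$ without the intermediate process; this works and yields the same exponents, but note that the path $r\mapsto\sigma_{\varepsilon}\sqrt{r-\tau_{i}}\,\xi$ is not a semimartingale increment to which It\^o's formula applies with vanishing martingale terms, so connecting your endpoint expansion with the time-integrated PDE along the path requires either the chain rule with the integrable $1/\sqrt{r-\tau_{i}}$ singularity in the drift or, in effect, reinstating the Brownian comparison — which is precisely what the paper's two-stage decomposition buys you for free. Your accounting of the fourth-moment mismatch and of the $\beta\in[2,3)$ cases via the H\"older modulus is consistent with the paper's (the paper only writes out $\beta=3$).
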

\begin{proof}
 We only prove the case of $\beta=3$.
  Other cases can be proved similarly.
 \begin{equation}\label{eq:errorestimate_high}
  \begin{aligned}
  &\big|\mathbf{E}\left[
   u(T\wedge\overline{\tau}_{t,\texttt{{\rm{x}}}},
   \overline{\texttt{{\rm{X}}}}_{T\wedge\overline{\tau}_{t,\texttt{{\rm{x}}}}})
   \overline{Y}_{T\wedge\overline{\tau}_{t,\texttt{{\rm{x}}}}}
   +\overline{Z}_{T\wedge\overline{\tau}_{t,\texttt{{\rm{x}}}}}\right]
    -u(t,\texttt{{\rm{x}}})\big|
    \\[3pt]
   =& \left|\mathbf{E}\left\{\sum_{i=0}^{n_{T\wedge\overline{\tau}_{t,\texttt{{\rm{x}}}}}-1}
   \Big[u({\tau}_{i+1},\overline{\texttt{{\rm{X}}}}_{{\tau}_{i+1}})
   \overline{Y}_{{\tau}_{i+1}}+\overline{Z}_{{\tau}_{i+1}}
   -u({\tau}_{i},\overline{\texttt{{\rm{X}}}}_{{\tau}_{i}})
   \overline{Y}_{\tau_{i}}-\overline{Z}_{{\tau}_{i}}\Big]\right\}\right|
   \\[3pt]
   =& \Bigg|\mathbf{E}\Bigg\{\sum_{i=0}^{n_{T\wedge\overline{\tau}_{t,\texttt{{\rm{x}}}}}-1}
    \overline{Y}_{\tau_{i+1}}\left[u({\tau}_{i+1},\overline{\texttt{{\rm{X}}}}_{{\tau}_{i+1}})
   -u({\tau}_{i+1},\widetilde{\texttt{{\rm{X}}}}_{{\tau}_{i+1}})\right]
   \\[3pt]
   &+\overline{Y}_{\tau_{i}}\left[u({\tau}_{i+1},\widetilde{\texttt{{\rm{X}}}}_{{\tau}_{i+1}^{-}})
   -u({\tau}_{i},\overline{\texttt{{\rm{X}}}}_{{\tau}_{i}})\right]
   +\overline{Y}_{\tau_{i}}c({\tau}_{i},\overline{\texttt{{\rm{X}}}}_{{\tau}_{i}})
   u({\tau}_{i+1},\widetilde{\texttt{{\rm{X}}}}_{{\tau}_{i+1}^{-}})\Delta{\tau_{i}}
   \\[3pt]
   &+\overline{Y}_{\tau_{i+1}}\left[u({\tau}_{i+1},\widetilde{\texttt{{\rm{X}}}}_{{\tau}_{i+1}})
   -u({\tau}_{i+1},\widetilde{\texttt{{\rm{X}}}}_{{\tau}_{i+1}^{-}})\right]
   +\overline{Y}_{\tau_{i}}f({\tau}_{i},\overline{\texttt{{\rm{X}}}}_{{\tau}_{i}})\Delta{\tau_{i}}
   \Bigg\}\Bigg|,
  \end{aligned}
 \end{equation}
 where
 \begin{equation}\label{eq:approSDEs3}
  \left\{
  \begin{aligned}
  &\widetilde{\texttt{{\rm{X}}}}_{\eta}
  =\overline{\texttt{{\rm{X}}}}_{\tau_{i}}+
  b({\tau_{i}},\overline{{\texttt{{\rm{X}}}}}_{{\tau}_{i}})(\eta-\tau_{i})
  +\sigma_{\varepsilon}W_{\eta-\tau_{i}},\quad  \eta\in [\tau_{i},\tau_{i+1}),
  \\[3pt]
  &\widetilde{\texttt{{\rm{X}}}}_{\tau_{i+1}}=\widetilde{\texttt{{\rm{X}}}}_{\tau_{i+1}-}
  +\Delta L_{\tau_{i+1}}^{\varepsilon},
  \end{aligned}
  \right.
 \end{equation}
and $\Delta{\tau_{i}}=\tau_{i+1}-\tau_{i}$.

 By It\^{o} formula, the martingale property and system (\ref{eq:approSDEs3}), one gets
\begin{align}\label{eq:errorestimate_high1}
  &\big|\mathbf{E}\left[
   u(T\wedge\overline{\tau}_{t,\texttt{{\rm{x}}}},
   \overline{\texttt{{\rm{X}}}}_{T\wedge\overline{\tau}_{t,\texttt{{\rm{x}}}}})
   \overline{Y}_{T\wedge\overline{\tau}_{t,\texttt{{\rm{x}}}}}
   +\overline{Z}_{T\wedge\overline{\tau}_{t,\texttt{{\rm{x}}}}}\right]
    -u(t,\texttt{{\rm{x}}})\big|
    \nonumber\\[3pt]
   =&\Bigg|\mathbf{E}\Bigg\{\sum_{i=0}^{n_{T\wedge\overline{\tau}_{t,\texttt{{\rm{x}}}}}-1}
   \overline{Y}_{\tau_{i}}\int_{\tau_{i}}^{\tau_{i+1}}\bigg[\frac{\partial{u}}{\partial{\eta}}
   (\eta,\widetilde{\texttt{{\rm{X}}}}_{\eta-})
   +\sum_{j=1}^{n}b_{j}(\tau_{i},\widetilde{\texttt{{\rm{X}}}}_{\tau_{i}})
   \frac{\partial{u}}{\partial{x}_{j}}(\eta,\widetilde{\texttt{{\rm{X}}}}_{\eta-})
   \nonumber\\[3pt]
   &+\frac{1}{2}\sum_{j_{1},j_{2}=1}^{n}(\sigma_{\varepsilon}^{T}
   \sigma_{\varepsilon})_{j_{1},j_{2}}
   \frac{\partial^{2}u(\eta,\widetilde{\texttt{{\rm{X}}}}_{\eta-})}
   {\partial{x_{j_{1}}}\partial{x_{j_{2}}}}\bigg]{\rm d}\eta
   +\overline{Y}_{\tau_{i}}c({\tau}_{i},\overline{\texttt{{\rm{X}}}}_{{\tau}_{i}})
   u({\tau}_{i+1},\widetilde{\texttt{{\rm{X}}}}_{{\tau}_{i+1}^{-}})\Delta{\tau_{i}}
   \nonumber\\[3pt]
   &+\overline{Y}_{\tau_{i+1}}\left[u({\tau}_{i+1},\overline{\texttt{{\rm{X}}}}_{{\tau}_{i+1}})
   -u({\tau}_{i+1},\widetilde{\texttt{{\rm{X}}}}_{{\tau}_{i+1}})\right]
   +\overline{Y}_{\tau_{i}}f({\tau}_{i},\overline{\texttt{{\rm{X}}}}_{{\tau}_{i}})\Delta{\tau_{i}}
   \Bigg\}
   \nonumber\\[3pt]
   &+\mathbf{E}\left[\int_{t}^{T\wedge\widetilde{\tau}_{t,\texttt{{\rm{x}}}}}
   \widetilde{Y}_{\eta}\int_{|\texttt{{\rm{y}}}|>\varepsilon}\left[
   u(\eta,\widetilde{\texttt{{\rm{X}}}}_{\eta-}+\texttt{{\rm{y}}})
   -u(\eta,\widetilde{\texttt{{\rm{X}}}}_{\eta-})\right]N({\rm d}\eta,{\rm d}\texttt{{\rm{y}}})
   \right]\Bigg|
    \nonumber\\[3pt]
   =&\Bigg|\mathbf{E}\Bigg\{\sum_{i=0}^{n_{T\wedge\overline{\tau}_{t,\texttt{{\rm{x}}}}}-1}
   \overline{Y}_{\tau_{i}}\mathbf{E}\bigg[\int_{\tau_{i}}^{\tau_{i+1}}
   \bigg[\frac{\partial{u}}{\partial{\eta}}
   (\eta,\widetilde{\texttt{{\rm{X}}}}_{\eta-})
   +\sum_{j=1}^{n}b_{j}(\tau_{i},\widetilde{\texttt{{\rm{X}}}}_{\tau_{i}})
   \frac{\partial{u}}{\partial{x}_{j}}(\eta,\widetilde{\texttt{{\rm{X}}}}_{\eta-})
   \nonumber\\[3pt]
   &+\frac{1}{2}\sum_{j_{1},j_{2}=1}^{n}(\sigma_{\varepsilon}
   \sigma_{\varepsilon}^{T})_{j_{1},j_{2}}
   \frac{\partial^{2}u(\eta,\widetilde{\texttt{{\rm{X}}}}_{\eta-})}
   {\partial{x_{j_{1}}}\partial{x_{j_{2}}}}\bigg]{\rm d}\eta
   +c({\tau}_{i},\overline{\texttt{{\rm{X}}}}_{{\tau}_{i}})
   u({\tau}_{i+1},\widetilde{\texttt{{\rm{X}}}}_{{\tau}_{i+1}^{-}})\Delta{\tau_{i}}
   \nonumber\\[3pt]
   &+\left[1+c({\tau}_{i},\overline{\texttt{{\rm{X}}}}_{{\tau}_{i}})\Delta{\tau_{i}}\right]
   \left[u({\tau}_{i+1},\overline{\texttt{{\rm{X}}}}_{{\tau}_{i+1}})
   -u({\tau}_{i+1},\widetilde{\texttt{{\rm{X}}}}_{{\tau}_{i+1}})\right]
   +f({\tau}_{i},\overline{\texttt{{\rm{X}}}}_{{\tau}_{i}})\Delta{\tau_{i}}
   \nonumber\\[3pt]
   &+\left(1+c({\tau}_{i},\overline{\texttt{{\rm{X}}}}_{{\tau}_{i}})\Delta{\tau_{i}}\right)
   \int_{\tau_{i}}^{\tau_{i+1}}
   \int_{|\texttt{{\rm{y}}}|>\varepsilon}\left[
   u(\eta,\widetilde{\texttt{{\rm{X}}}}_{\eta-}+\texttt{{\rm{y}}})
   -u(\eta,\widetilde{\texttt{{\rm{X}}}}_{\eta-})\right]\nu({\rm d}\texttt{{\rm{y}}}){\rm d}\eta
   \big{|}\mathcal{F}_{\tau_{i}}\bigg]\Bigg\}\Bigg|
   \nonumber\\[3pt]
   =& \left|\mathbf{E}\left\{\sum_{i}\overline{Y}_{\tau_{i}}\mathbf{E}\left[\,
   \overline{B}_{i}\big{|}\mathcal{F}_{{\tau}_{i}}\right]\right\}\right|.
 \end{align}
 Since $u(t,\texttt{{\rm{x}}})$ satisfies (\ref{eq:terminalboundaryproblem}), it holds that
 \begin{align}\label{eq:errorestimate2}
   &\mathbf{E}\left[\overline{B}_{i}\big{|}\mathcal{F}_{{\tau}_{i}}\right]
   \nonumber\\[3pt]
   =&\mathbf{E}\Bigg\{
   \int_{\tau_{i}}^{\tau_{i+1}}\bigg[\frac{1}{2}\sum_{j_{1},j_{2}=1}^{n}(\sigma_{\varepsilon}
   \sigma_{\varepsilon}^{T})_{j_{1},j_{2}}
   \frac{\partial^{2}u}{\partial{x_{j_{1}}}\partial{x_{j_{2}}}}
   (\eta,\widetilde{\texttt{{\rm{X}}}}_{\eta-})
   \nonumber\\[3pt]
   &-\int_{|\texttt{{\rm{y}}}|<\varepsilon}\big[
   u(\eta,\widetilde{\texttt{{\rm{X}}}}_{\eta-}+\texttt{{\rm{y}}})
   -u(\eta,\widetilde{\texttt{{\rm{X}}}}_{\eta-})
   -I_{|\texttt{{\rm{y}}}|<1}
   (\nabla{u(\eta,\widetilde{\texttt{{\rm{X}}}}_{\eta-})},y)\big]\nu({\rm d}\texttt{{\rm{y}}})
   \bigg]\,{\rm d}\eta
   \nonumber\\[3pt]
   &+c(\tau_{i},\widetilde{\texttt{{\rm{X}}}}_{\tau_{i}})\Delta{\tau_{i}}
   \int_{\tau_{i}}^{\tau_{i+1}}\int_{|\texttt{{\rm{y}}}|>\varepsilon}\left[
   u(\eta,\widetilde{\texttt{{\rm{X}}}}_{\eta-}+\texttt{{\rm{y}}})
   -u(\eta,\widetilde{\texttt{{\rm{X}}}}_{\eta-})\right]
   \nu({\rm d}\texttt{{\rm{y}}}){\rm d}\eta
   \nonumber\\[3pt]
   &+\left[1+c({\tau}_{i},\overline{\texttt{{\rm{X}}}}_{{\tau}_{i}})\Delta{\tau_{i}}\right]
   \left[u\left({\tau}_{i+1},\overline{\texttt{{\rm{X}}}}_{{\tau}_{i+1}}\right)
   -u\left({\tau}_{i+1},\widetilde{\texttt{{\rm{X}}}}_{{\tau}_{i+1}}\right)\right]
   \nonumber\\[3pt]
   &+\int_{\tau_{i}}^{\tau_{i+1}}\sum_{j=1}^{n}\left[b_{j}
   (\tau_{i},\overline{\texttt{{\rm{X}}}}_{\tau_{i}})
   -b_{j}(\eta,\widetilde{\texttt{{\rm{X}}}}_{\eta-})\right]
   \frac{\partial{u}}{\partial{x}_{j}}(\eta,\widetilde{\texttt{{\rm{X}}}}_{\eta-}){\rm d}\eta
   \nonumber\\[3pt]
   &+\int_{\tau_{i}}^{\tau_{i+1}}c(\tau_{i},\overline{\texttt{{\rm{X}}}}_{\tau_{i}})
   u(\tau_{i+1},\overline{\texttt{{\rm{X}}}}_{\tau_{i+1}^{-}})
   -c(\eta,\widetilde{\texttt{{\rm{X}}}}_{\eta-})u(\eta,\widetilde{\texttt{{\rm{X}}}}_{\eta-})
   {\rm d}\eta
   \nonumber\\[3pt]
   &+\int_{\tau_{i}}^{\tau_{i+1}}f(\tau_{i},\overline{\texttt{{\rm{X}}}}_{\tau_{i}})
   -f(\eta,\widetilde{\texttt{{\rm{X}}}}_{\eta-}){\rm d}\eta
   \Big{|}\mathcal{F}_{\tau_{i}}\Bigg\}
   \nonumber\\[3pt]
   =&\,\mathbf{E}\left[\overline{B}_{i1}+\overline{B}_{i2}
   +\overline{B}_{i3}+\overline{B}_{i4}+\overline{B}_{i5}+\overline{B}_{i6}
   |\mathcal{F}_{\tau_{i}}\right],
  \end{align}

 For the first term, we obtain
 \begin{align}\label{eq:estimateBi1}
   &\left|\mathbf{E}\left[\overline{B}_{i1}\Big{|}\mathcal{F}_{\tau_{i}}\right]\right|
   \nonumber\\[3pt]
   =&\,C(n,s)\bigg|\mathbf{E}\bigg[\int_{\tau_{i}}^{\tau_{i+1}}
   \frac{1}{2}\sum_{j_{1},j_{2}=1}^{n}\int_{|\texttt{{\rm{y}}}|<\varepsilon}
   y_{j_{1}}y_{j_{2}}\frac{\partial^{2}u}{\partial{x_{j_{1}}}\partial{x_{j_{2}}}}
   (\eta,\widetilde{\texttt{{\rm{X}}}}_{\eta-})\frac{{\rm d}\texttt{{\rm{y}}}}
   {|\texttt{{\rm{y}}}|^{n+2s}}
   \nonumber\\[3pt]
   &-\int_{|\texttt{{\rm{y}}}|<\varepsilon}\int_{0}^{1}
   (\nabla{u(\eta,\widetilde{\texttt{{\rm{X}}}}_{\eta-}+\alpha\texttt{{\rm{y}}})}
   -\nabla{u(\eta,\widetilde{\texttt{{\rm{X}}}}_{\eta-})},\texttt{{\rm{y}}})
   {\rm d}\alpha\,\frac{{\rm d}\texttt{{\rm{y}}}}
   {|\texttt{{\rm{y}}}|^{n+2s}}\,{\rm d}\eta\Big{|}\mathcal{F}_{\tau_{i}}\bigg]\bigg|
   \nonumber\\[3pt]
   =&\,C(n,s)\bigg|\mathbf{E}\bigg[\int_{\tau_{i}}^{\tau_{i+1}}
   \sum_{j_{1},j_{2}=1}^{n}\int_{|\texttt{{\rm{y}}}|<\varepsilon}
   \int_{0}^{1}\int_{0}^{1}\alpha y_{j1}y_{j2}
   \nonumber\\[3pt]
   &\times\Big[\frac{\partial^{2}u}
   {\partial{x_{j_{1}}}\partial{x_{j_{2}}}}(\eta,\widetilde{\texttt{{\rm{X}}}}_{\eta-}
   +\alpha'\alpha\texttt{{\rm{y}}})-\frac{\partial^{2}u}{\partial{x_{j_{1}}}\partial{x_{j_{2}}}}
   (\eta,\widetilde{\texttt{{\rm{X}}}}_{\eta-})\Big]
   {\rm d}\alpha'{\rm d}\alpha \frac{{\rm d}\texttt{{\rm{y}}}}
   {|\texttt{{\rm{y}}}|^{n+2s}} {\rm d}\eta\Big{|}\mathcal{F}_{\tau_{i}}\bigg]\bigg|
   \nonumber\\[3pt]
   \leq&\,C_{2}\mathbf{E}\bigg[\int_{\tau_{i}}^{\tau_{i+1}}\int_{|\texttt{{\rm{y}}}|<\varepsilon}
   \int_{0}^{1}\int_{0}^{1}\left|D^2u(\eta,\widetilde{\texttt{{\rm{X}}}}_{\eta-}
   +\alpha'\alpha\texttt{{\rm{y}}})
   -D^2u(\eta,\widetilde{\texttt{{\rm{X}}}}_{\eta-})\right|
   \frac{{\rm d}\alpha'{\rm d}\alpha{\rm d}\texttt{{\rm{y}}}{\rm d}\eta}
   {|\texttt{{\rm{y}}}|^{n+2s-2}}
   \Big{|}\mathcal{F}_{\tau_{i}}\bigg]\bigg|
   \nonumber\\[3pt]
   \leq&\, C_{2}\varepsilon^{3-2s}\mathbf{E}\left[\tau_{i+1}-\tau_{i}
   \Big{|}\mathcal{F}_{\tau_{i}}\right].
 \end{align}

  By Taylor expansion, it follows for the third part that
  \begin{align}\label{eq:estimateBi2}
    &\left|\mathbf{E}\left[\overline{B}_{i3}\Big{|}\mathcal{F}_{\tau_{i}}\right]\right|
    \nonumber\\[3pt]
    =&\left|
    \mathbf{E}\left\{\left[1+c({\tau}_{i},\overline{\texttt{{\rm{X}}}}_{{\tau}_{i}})
    \Delta{\tau_{i}}\right]\left[u\left({\tau}_{i+1},\overline{\texttt{{\rm{X}}}}_{{\tau}_{i+1}}\right)
    -u\left({\tau}_{i+1},\widetilde{\texttt{{\rm{X}}}}_{{\tau}_{i+1}}\right)\right]
   \Big{|}\mathcal{F}_{\tau_{i}}\right\}\right|
   \nonumber\\[3pt]
    =&\bigg|\mathbf{E}\bigg\{\left[1+c({\tau}_{i},\overline{\texttt{{\rm{X}}}}_{{\tau}_{i}})
    \Delta{\tau_{i}}\right]
    \nonumber\\[3pt]
    &\times \left[u\left({\tau}_{i+1},\overline{\texttt{{\rm{X}}}}_{{\tau}_{i}}
    +L_{\Delta\tau_{i}}^{\varepsilon}+\overline{\sigma}_{\varepsilon}(\Delta\tau_{i})^{\frac{1}{2}}
    \xi\right)-u\left({\tau}_{i+1},\overline{\texttt{{\rm{X}}}}_{{\tau}_{i}}
    +L_{\Delta\tau_{i}}^{\varepsilon}+\overline{\sigma}_{\varepsilon}W_{\Delta\tau_{i}}\right)\right]
    \Big{|}\mathcal{F}_{\tau_{i}}\bigg]\bigg|
    \nonumber\\[3pt]
    =&\,\bigg|\mathbf{E}\bigg\{\left[1+c({\tau}_{i},\overline{\texttt{{\rm{X}}}}_{{\tau}_{i}})
    \Delta{\tau_{i}}\right]\Big[\sum_{j_{1}=1}^{n}\frac{\partial{u}}{\partial{x_{j_{1}}}}
    \left(\tau_{i+1},\overline{\texttt{{\rm{X}}}}_{{\tau}_{i}}+L_{\Delta\tau_{i}}^{\varepsilon}\right)
    \left[(\Delta\tau_{i})^{\frac{1}{2}}\overline{\sigma}_{\varepsilon}\xi_{j_{1}}
    -\overline{\sigma}_{\varepsilon}W_{\Delta\tau_{i}}^{j_{1}}\right]
    \nonumber\\[3pt]
    &+\frac{1}{2!}\sum_{j_{1},j_{2}=1}^{n}\frac{\partial^2{u}}{\partial{x_{j_{1}}}
    \partial{x_{j_{2}}}}\left(\tau_{i+1},\overline{\texttt{{\rm{X}}}}_{{\tau}_{i}}
    +L_{\Delta\tau_{i}}^{\varepsilon}\right)
    \Big[\Delta\tau_{i}\overline{\sigma}_{\varepsilon}\xi_{j_{1}}\,\overline{\sigma}_{\varepsilon}\xi_{j_{2}}
    -\overline{\sigma}_{\varepsilon}W_{\Delta\tau_{i}}^{j_{1}}
    \overline{\sigma}_{\varepsilon}W_{\Delta\tau_{i}}^{j_{2}}\Big]
    \nonumber\\[3pt]
    &+\frac{1}{3!}\sum_{j_{1},j_{2},j_{3}=1}^{n}\frac{\partial^3{u}(\tau_{i+1},
    \overline{\texttt{{\rm{X}}}}_{{\tau}_{i}}
    +L_{\Delta\tau_{i}}^{\varepsilon}
    +\theta_{1}(\Delta\tau_{i})^{\frac{1}{2}}\xi)}
    {\partial{x_{j_{1}}}\partial{x_{j_{2}}}
    \partial{x_{j_{3}}}}\left((\Delta\tau_{i})^{\frac{3}{2}}
    \overline{\sigma}_{\varepsilon}\xi_{j_{1}}\,\overline{\sigma}_{\varepsilon}\xi_{j_{2}}\,
    \overline{\sigma}_{\varepsilon}\xi_{j_{3}}\right)
    \nonumber\\[3pt]
    &-\frac{\partial^3{u}(\tau_{i+1},\overline{\texttt{{\rm{X}}}}_{{\tau}_{i}}
    +L_{\Delta\tau_{i}}^{\varepsilon}
    +\theta_{2}\sigma_{\varepsilon}W_{\Delta\tau_{i}})
    }{\partial{x_{j_{1}}}\partial{x_{j_{2}}}
    \partial{x_{j_{3}}}}
    \left(\overline{\sigma}_{\varepsilon}W_{\Delta\tau_{i}}^{j_{1}}\,
    \overline{\sigma}_{\varepsilon}W_{\Delta\tau_{i}}^{j_{2}}\,
    \overline{\sigma}_{\varepsilon}W_{\Delta\tau_{i}}^{j_{3}}\right)\Big]
    \Big{|}\mathcal{F}_{\tau_{i}}\bigg\}\bigg|
    \nonumber\\[3pt]
    \leq& \,C_{2}\overline{\sigma}_{\varepsilon}^{3}
    \mathbf{E}\left[(\tau_{i+1}-\tau_{i})^{\frac{3}{2}}\big{|}\mathcal{F}_{\tau_{i}}\right]
    \leq C_{2}{\varepsilon}^{3-2s}\mathbf{E}\left[\tau_{i+1}-\tau_{i}\big{|}\mathcal{F}_{\tau_{i}}\right],
   \end{align}
  in which we utilize the fact that $\xi$, $W_{1}$ and $\tau_{i}\,(i=1,2,\ldots,n)$ are independent,
  \begin{equation}
   \begin{aligned}
   &\mathbf{E}\left[W_{\Delta\tau_{i}}\right]
   =\mathbf{E}[(\Delta\tau_{i})^\frac{1}{2}W_{1}], \quad
   \mathbf{E}\left[\overline{\sigma}_{\varepsilon}\xi_{j_{1}}\right]
   = \mathbf{E}\left[\overline{\sigma}_{\varepsilon}W_{1}^{j_{1}}\right]=0,
   \, j_{1}=1,2,\ldots,n,
   \\[3pt]
   &\mathbf{E}\left[\overline{\sigma}_{\varepsilon}\xi_{j_{1}}\,
   \overline{\sigma}_{\varepsilon}\xi_{j_{2}}\right]
   =\mathbf{E}\left[\overline{\sigma}_{\varepsilon}W_{1}^{j_{1}}\,
   \overline{\sigma}_{\varepsilon}W_{1}^{j_{2}}\right]
   =\left\{\begin{aligned}
   0, \quad j_{1}\neq j_{2},
   \\
   \overline{\sigma}_{\varepsilon}^2, \quad j_{1}=j_{2},
   \end{aligned}
   \right.
   \\
   \mathrm{and}
   \\[3pt]
   &\mathbf{E}\left[\left|\overline{\sigma}_{\varepsilon}\xi_{j_{1}}\,
   \overline{\sigma}_{\varepsilon}\xi_{j_{2}}\,
   \overline{\sigma}_{\varepsilon}\xi_{j_{3}}\right|\right]\leq
   \overline{\sigma}_{\varepsilon}^3,
   \quad
   \mathbf{E}\left[\left|\overline{\sigma}_{\varepsilon}W_{1}^{j_{1}}\,
   \overline{\sigma}_{\varepsilon}W_{1}^{j_{2}}
   \overline{\sigma}_{\varepsilon}W_{1}^{j_{3}}\right|\right]\leq
   \sqrt{15}\,\overline{\sigma}_{\varepsilon}^3.
   \end{aligned}
  \end{equation}
  Now we estimate the fourth term
  \begin{equation}
   \begin{aligned}
   &\left|\mathbf{E}\left[\overline{B}_{i4}\Big{|}\mathcal{F}_{\tau_{i}}\right]\right|
   =\left|\mathbf{E}\left\{\int_{\tau_{i}}^{\tau_{i+1}}\sum_{j=1}^{n}\left[b_{j}
   (\tau_{i},\overline{\texttt{{\rm{X}}}}_{\tau_{i}})
   -b_{j}(\eta,\widetilde{\texttt{{\rm{X}}}}_{\eta-})\right]
   \frac{\partial{u}}{\partial{x}_{j}}(\eta,\widetilde{\texttt{{\rm{X}}}}_{\eta-}){\rm d}\eta
   \Big{|}\mathcal{F}_{\tau_{i}}\right\}\right|
   \\[3pt]
   =&\Bigg|\mathbf{E}\left\{\int_{\tau_{i}}^{\tau_{i+1}}\sum_{j=1}^{n}\left[b_{j}
   (\tau_{i},\overline{\texttt{{\rm{X}}}}_{\tau_{i}})
   -b_{j}(\eta,\widetilde{\texttt{{\rm{X}}}}_{\eta-})\right]
   \frac{\partial{u}}{\partial{x}_{j}}(\eta,\widetilde{\texttt{{\rm{X}}}}_{\tau_{i}}){\rm d}\eta
   \Big{|}\mathcal{F}_{\tau_{i}}\right\}
   \\[3pt]
   &+\mathbf{E}\left\{\int_{\tau_{i}}^{\tau_{i+1}}\sum_{j=1}^{n}\left[b_{j}
   (\tau_{i},\overline{\texttt{{\rm{X}}}}_{\tau_{i}})
   -b_{j}(\eta,\widetilde{\texttt{{\rm{X}}}}_{\eta-})\right]
   \left[\frac{\partial{u}}{\partial{x}_{j}}(\eta,\widetilde{\texttt{{\rm{X}}}}_{\eta-})
   -\frac{\partial{u}}{\partial{x}_{j}}
   (\eta,\widetilde{\texttt{{\rm{X}}}}_{\tau_{i}})\right]{\rm d}\eta
   \Big{|}\mathcal{F}_{\tau_{i}}\right\}\Bigg|
   \\[3pt]
   =&\left|\mathbf{E}\left[\overline{B}_{i41}
   +\overline{B}_{i42}\Big{|}\mathcal{F}_{\tau_{i}}\right]\right|.
   \end{aligned}
  \end{equation}
By It\^{o} formula and the martingale property again, one has
 \begin{equation}
  \begin{aligned}
   &\left|\mathbf{E}\left[\overline{B}_{i41}\Big{|}\mathcal{F}_{\tau_{i}}\right]\right|
   \\[3pt]
   =&\Bigg|\mathbf{E}\bigg\{\sum_{j=1}^{n}
   \int_{\tau_{i}}^{\tau_{i+1}}\frac{\partial{u}}{{\partial{x}_{j}}}
   (\eta,\widetilde{\texttt{{\rm{X}}}}_{\tau_{i}})
   \int_{\tau_{i}}^{\eta}
   \bigg[\frac{\partial{b_{j}}}{\partial{\eta'}}
   (\eta',\widetilde{\texttt{{\rm{X}}}}_{\eta'-})
   +\sum_{j_{1}=1}^{n}b_{j_{1}}(\tau_{i},\widetilde{\texttt{{\rm{X}}}}_{\tau_{i}})
   \frac{\partial{b_{j}}}{\partial{x}_{j_{1}}}(\eta',\widetilde{\texttt{{\rm{X}}}}_{\eta'-})
   \\[3pt]
   &+\frac{1}{2}\sum_{j_{1},j_{2}=1}^{n}(\sigma_{\varepsilon}^{T}
   \sigma_{\varepsilon})_{j_{1},j_{2}}
   \frac{\partial^{2}u(\eta',\widetilde{\texttt{{\rm{X}}}}_{\eta'-})}
   {\partial{x_{j_{1}}}\partial{x_{j_{2}}}}\bigg]{\rm d}\eta'\,{\rm d}\eta
   \Big{|}\mathcal{F}_{\tau_{i}}\bigg\}
   \\[3pt]
   &+\mathbf{E}\left[\sum_{j=1}^{n}\int_{\tau_{i}}^{\tau_{i+1}}
   \frac{\partial{u}}{{\partial{x}_{j}}}
   (\eta,\widetilde{\texttt{{\rm{X}}}}_{\tau_{i}})
   \int_{\tau_{i}}^{\eta}\sum_{j_{1}=1}^{n}\overline{\sigma}_{\varepsilon}^{j_{1}}
   \frac{\partial{b_{j}}}{\partial{x}_{j_{1}}}
   (\eta',\widetilde{\texttt{{\rm{X}}}}_{\eta'-}){\rm d}W_{\eta'-}^{j_{1}}\,{\rm d}\eta
   \Big{|}\mathcal{F}_{\tau_{i}}\right]\Bigg|
   \\[3pt]
   \leq&C_{1}\mathbf{E}\left[(\tau_{i+1}-\tau_{i})^2\Big{|}\mathcal{F}_{\tau_{i}}\right],
  \end{aligned}
 \end{equation}
where $C_{1}$ is a constant. It is also easy to get
\begin{equation}\label{eq:estimateBi42}
   \mathbf{E}\left[\overline{B}_{i42}\Big{|}\mathcal{F}_{\tau_{i}}\right]
   \leq C_{1}\mathbf{E}\left[(\tau_{i+1}-\tau_{i})^2\Big{|}\mathcal{F}_{\tau_{i}}\right],
  \end{equation}
where the conditions of $u(t,\texttt{{\rm{x}}})$ and $b_{j}(t,\texttt{{\rm{x}}})$
are uesd.

For the remaining part, by Taylor expansion and It\^{o} formula, it is evident that
  \begin{equation}\label{eq:estimateBi3Bi4}
   \mathbf{E}\left[\overline{B}_{i2}+\overline{B}_{i5}+\overline{B}_{i6}
   \Big{|}\mathcal{F}_{\tau_{i}}\right]
   \leq C_{1}\mathbf{E}\left[(\tau_{i+1}-\tau_{i})^2\Big{|}\mathcal{F}_{\tau_{i}}\right].
  \end{equation}
  Here $C_{1}$ is a constant.

  Since Lemma \ref{lemma:jumptime} is still true by replacing
  $\delta_{i}=(\overline{\tau}_{t,\texttt{{\rm{x}}}}\wedge T)-\tau_{i}$.
  Thus, combining Lemmas \ref{lemma:jumptime} and \ref{lemma:Ybound2}, and
  (\ref{eq:errorestimate_high})-(\ref{eq:estimateBi3Bi4}), we can
  obtain the desired result.
\end{proof}

\begin{remark}
{\rm(I)} The convergent order is lower when $s$ is small. However, consider the following jump-adapted
 time discretization: Let $\Delta{t}=(T-t)/M$ be a mesh size, $M\in \mathbb{Z}^{+}$,
 \begin{equation}
  \tau_{i+1}=\inf\{t>\tau_{i}:\Delta{{L}_{t}^{\varepsilon}}\neq 0\}\wedge
 (\tau_{i}+\Delta{t})\wedge\overline{\tau}_{t,\texttt{{\rm{x}}}}\wedge{T}.
 \end{equation}
 Then we can get the following error estimate
 \begin{equation}
 \left|\mathbf{E}\left[
   u(T\wedge\overline{\tau}_{t,\texttt{{\rm{x}}}},
   \overline{\texttt{{\rm{X}}}}_{T\wedge\overline{\tau}_{t,\texttt{{\rm{x}}}}})
   \overline{Y}_{T\wedge\overline{\tau}_{t,\texttt{{\rm{x}}}}}
   +\overline{Z}_{T\wedge\overline{\tau}_{t,\texttt{{\rm{x}}}}}\right]
    -u(t,\texttt{{\rm{x}}})\right|
    \leq C_{1}(\varepsilon^{2s}\wedge\Delta{t})+C_{2}\varepsilon^{\lfloor\beta\rfloor-2s}.
 \end{equation}
 Thus, if we let $\Delta{t}=\varepsilon$, we can get the first order convergence
 for small $s$.
 \\
{\rm(II)} If coefficients satisfy Assumption {\rm II} instead of Assumption {\rm I},
the higher-order scheme will lose accuracy. At this time, we have the following estimate
 \begin{equation}
 \left|\mathbf{E}\left[
   u(T\wedge\overline{\tau}_{t,\texttt{{\rm{x}}}},
   \overline{\texttt{{\rm{X}}}}_{T\wedge\overline{\tau}_{t,\texttt{{\rm{x}}}}})
   \overline{Y}_{T\wedge\overline{\tau}_{t,\texttt{{\rm{x}}}}}
   +\overline{Z}_{T\wedge\overline{\tau}_{t,\texttt{{\rm{x}}}}}\right]
    -u(t,\texttt{{\rm{x}}})\right|
    \leq C_{1}\varepsilon^{s}+C_{2}\varepsilon^{\lfloor\beta\rfloor-2s}.
 \end{equation}
\end{remark}

\section{Numerical experiments}
  In the section, numerical examples are carried out by using jump-adapted scheme (\ref{eq:approSolution})
and higher-order scheme (\ref{eq:approSolution2}) on an i5-8250U CPU.
We approximate the expectation by Monte Carlo method, so that it will have statistical error
$\frac{1}{\sqrt{N}}$, where $N>0$ is the number of samples.

  Although the derived schemes for equation ({\ref{eq:terminalboundaryproblem}}) in
more than two space dimensions, they are still suitable for the two spacial-dimension case.

\begin{example}\label{example1}
 Let $\mathbb{D}$ be a unit ball in $\mathbb{R}^{n}$ centered at the origin. Consider the
 following fractional heat equation,
 \begin{equation}\label{eq:example1}
 \left\{
  \begin{aligned}
  &\frac{\partial{u}}{\partial{t}}-(-\Delta)^{s}u+f(t,\texttt{{\rm{x}}})=0,
  \quad &&(t,\texttt{{\rm{x}}})\in(0,T]\times \mathbb{D},
  \\[3pt]
  &u(T,\texttt{{\rm{x}}})=T(1-|\texttt{{\rm{x}}}|^2)^{1+s},
  \quad &&\texttt{{\rm{x}}}\in {\mathbb{D}},
  \\[3pt]
  &u(t,\texttt{{\rm{x}}})=0, \quad &&(t,\texttt{{\rm{x}}})\in[0,T]
  \times \mathbb{R}^{n}\setminus\mathbb{D},
  \end{aligned}
  \right.
 \end{equation}
 where $s\in(0,1)$, $n\geq 2$, $T=1$, and
 \begin{equation}
  f(t,\texttt{{\rm{x}}})=
  2^{2s}\Gamma(2+s)\Gamma \left({n}/{2}+s\right){\Gamma\left({n}/{2}\right)}^{-1}
  \left(1-\left(1+{2s}/{n}\right)|\texttt{{\rm{x}}}|^2\right)t
  -(1-|\texttt{{\rm{x}}}|^2)^{1+s}.
 \end{equation}
 The exact solution to {\rm(\ref{eq:example1})} is
 \begin{equation}
  u(t,\texttt{{\rm{x}}})=
     t(1-|\texttt{{\rm{x}}}|^2)^{1+s}.
 \end{equation}
\end{example}
We set $s=0.25$, $0.5$, $0.75$, the number of samples $N=10^4$,
$\varepsilon=\frac{1}{10}$, $\frac{1}{20}$,
$\frac{1}{40}$, $\frac{1}{80}$, $\frac{1}{160}$ for $2,\,3,\,4,\,10$ dimensional cases, and
$\varepsilon=\frac{1}{5}$, $\frac{1}{10}$, $\frac{1}{20}$,
$\frac{1}{40}$, $\frac{1}{80}$ for $100$ dimensional case. We evaluate $ u(t,\texttt{{\rm{x}}})$
with $t=0.5$, $\texttt{{\rm{x}}}=\frac{1}{n}\ast\textbf{ones}(n)$, $n=2,\,3, \,4,\, 10,\,100$.
Table \ref{exmp1tab1} gives errors $\Big|u(t,\texttt{{\rm{x}}})
-\frac{1}{N}\sum\limits_{j=1}^{N}u(T\wedge\widetilde{\tau}_{t,\texttt{{\rm{x}}}},
   \widetilde{\texttt{{\rm{X}}}}_{T\wedge\widetilde{\tau}_{t,\texttt{{\rm{x}}}}}^{j})
   \widetilde{Y}_{T\wedge\widetilde{\tau}_{t,\texttt{{\rm{x}}}}}^{j}
   +\widetilde{Z}_{T\wedge\widetilde{\tau}_{t,\texttt{{\rm{x}}}}}^{j}\Big|$ by
jump-adapted scheme (\ref{eq:approSolution}). The statistical error produced by Monte Carlo
method does not reflect the error of the method by the large number of samples $N=10^4$.
It is clear to observe that the computational time slowly increases with the growth of dimension.
In addition, many small jumps occur when index $s$ grows, which costs a lot of computational
time. Figure \ref{1fig1} shows the convergent order of the scheme (\ref{eq:approSolution}) for
$u(t,\texttt{{\rm{x}}})\in{C^{1,1+s}([0,T]\times \mathbb{R}^{n})}$ is $2s\wedge(1-s)$.
The numerical results are in good agreement with the theoretical analysis.

\begin{table}[!htbp]
 \centering
 \caption{The error in simulation, the average number of steps, and the computational
  time (secs.) for Example \ref{example1} by scheme (\ref{eq:approSolution}) are given.}
  \label{exmp1tab1}
 \begin{tabular}{|c|ccc|ccc|ccc|}
   \hline
                       \multicolumn{10}{|c|}{$n=2$}\\
   \hline
   $\varepsilon$        &$s=0.25$    &step        &time
                        &$s=0.5$     &step        &time
                        &$s=0.75$    &step        &time \\
   \hline
                $\frac{1}{10}$    &3.525E-02   &1.71       &0.39
                                  &1.452E-02   &6.52       &0.28
                                  &2.978E-02   &6.62       &0.53       \\[3pt]
                $\frac{1}{20}$    &2.914E-02   &2.20       &0.20
                                  &1.012E-02   &6.52       &0.62
                                  &2.262E-02   &16.6       &1.15       \\[3pt]
                $\frac{1}{40}$    &2.151E-02   &2.90       &0.25
                                  &7.725E-03   &12.5       &0.85
                                  &1.867E-02   &43.2       &2.90       \\[3pt]
                $\frac{1}{80}$    &1.560E-02   &3.94       &0.29
                                  &5.648E-03   &24.7       &1.67
                                  &1.431E-02   &117        &8.06       \\[3pt]
                $\frac{1}{160}$   &1.131E-02   &5.37       &0.40
                                  &4.101E-03   &48.5       &3.23
                                  &1.165E-02   &323        &21.3       \\[3pt]
  \hline
                       \multicolumn{10}{|c|}{$n=3$}\\
   \hline
   $\varepsilon$        &$s=0.25$    &step        &time
                        &$s=0.5$     &step        &time
                        &$s=0.75$    &step        &time \\
   \hline
                $\frac{1}{10}$    &5.679E-02   &1.86       &0.32
                                  &6.624E-03   &4.36       &0.67
                                  &9.215E-02   &8.86       &1.46       \\[3pt]
                $\frac{1}{20}$    &4.347E-02   &2.42       &0.45
                                  &3.252E-03   &8.20       &1.10
                                  &6.331E-02   &22.3       &2.82       \\[3pt]
                $\frac{1}{40}$    &3.171E-02   &3.24       &0.53
                                  &2.408E-03   &15.7       &2.07
                                  &4.439E-02   &59.5       &7.64       \\[3pt]
                $\frac{1}{80}$    &2.034E-02   &4.35       &0.62
                                  &1.403E-03   &31.2       &3.98
                                  &3.105E-02   &160        &20.2       \\[3pt]
                $\frac{1}{160}$   &1.484E-03   &5.96       &0.93
                                  &9.072E-04   &62.2       &7.45
                                  &2.639E-02   &435        &54.3       \\[3pt]
  \hline
                      \multicolumn{10}{|c|}{$n=4$}\\
   \hline
   $\varepsilon$        &$s=0.25$    &step        &time
                        &$s=0.5$     &step        &time
                        &$s=0.75$    &step        &time \\
   \hline
                $\frac{1}{10}$    &7.138E-02   &1.95       &1.06
                                  &1.338E-02   &4.80       &2.62
                                  &1.299E-01   &10.0       &4.64       \\[3pt]
                $\frac{1}{20}$    &5.090E-02   &2.56       &1.32
                                  &8.977E-03   &9.22       &4.48
                                  &8.291E-02   &25.6       &12.3       \\[3pt]
                $\frac{1}{40}$    &3.841E-02   &3.41       &1.64
                                  &6.184E-03   &17.9       &8.51
                                  &5.776E-02   &67.8       &29.0       \\[3pt]
                $\frac{1}{80}$    &2.710E-02   &4.63       &2.53
                                  &4.095E-03   &35.4       &16.3
                                  &3.494E-02   &182        &84.4       \\[3pt]
                $\frac{1}{160}$   &1.936E-02   &6.27       &3.04
                                  &2.901E-03   &70.0       &33.3
                                  &2.790E-02   &501        &229       \\[3pt]
  \hline
                      \multicolumn{10}{|c|}{$n=10$}\\
   \hline
   $\varepsilon$        &$s=0.25$    &step        &time
                        &$s=0.5$     &step        &time
                        &$s=0.75$    &step        &time \\
   \hline
                $\frac{1}{10}$    &8.282E-02   &2.14       &3.65
                                  &2.260E-02   &6.06       &10.0
                                  &2.199E-01   &12.2       &20.3       \\[3pt]
                $\frac{1}{20}$    &6.203E-02   &2.89       &5.03
                                  &1.850E-02   &11.6       &17.3
                                  &1.431E-01   &31.2       &47.5       \\[3pt]
                $\frac{1}{40}$    &4.790E-02   &3.91       &6.54
                                  &1.787E-02   &22.8       &37.1
                                  &9.988E-02   &82.0       &125       \\[3pt]
                $\frac{1}{80}$    &3.502E-02   &5.41       &8.40
                                  &1.224E-02   &45.0       &70.4
                                  &6.776E-02   &222        &340       \\[3pt]
                $\frac{1}{160}$   &2.550E-02   &7.48       &12.3
                                  &8.396E-03   &89.1       &138
                                  &5.639E-02   &602        &885       \\[3pt]
  \hline
                      \multicolumn{10}{|c|}{$n=100$}\\
   \hline
   $\varepsilon$        &$s=0.25$    &step        &time
                        &$s=0.5$     &step        &time
                        &$s=0.75$    &step        &time \\
   \hline
                $\frac{1}{5}$     &9.041E-02   &1.89       &31.0
                                  &8.495E-02   &3.59       &59.1
                                  &4.192E-01   &5.46       &91.0       \\[3pt]
                $\frac{1}{10}$    &7.590E-02   &2.55       &41.7
                                  &5.571E-02   &6.81       &111
                                  &2.597E-01   &13.0       &210       \\[3pt]
                $\frac{1}{20}$    &5.708E-02   &3.58       &60.8
                                  &4.111E-02   &13.3       &221
                                  &1.761E-01   &33.1       &552       \\[3pt]
                $\frac{1}{40}$    &4.014E-02   &4.88       &79.0
                                  &3.051E-02   &26.0       &419
                                  &1.299E-01   &87.9       &1423       \\[3pt]
                $\frac{1}{80}$    &2.805E-02   &6.88       &113
                                  &2.204E-02   &50.8       &831
                                  &1.032E-01   &235        &3935       \\[3pt]
  \hline
 \end{tabular}
\end{table}

\begin{figure}[htbp]
\centering

\subfigure[$s=0.25$]{
\begin{minipage}[t]{0.5\linewidth}
\centering
\includegraphics[width=1\linewidth]{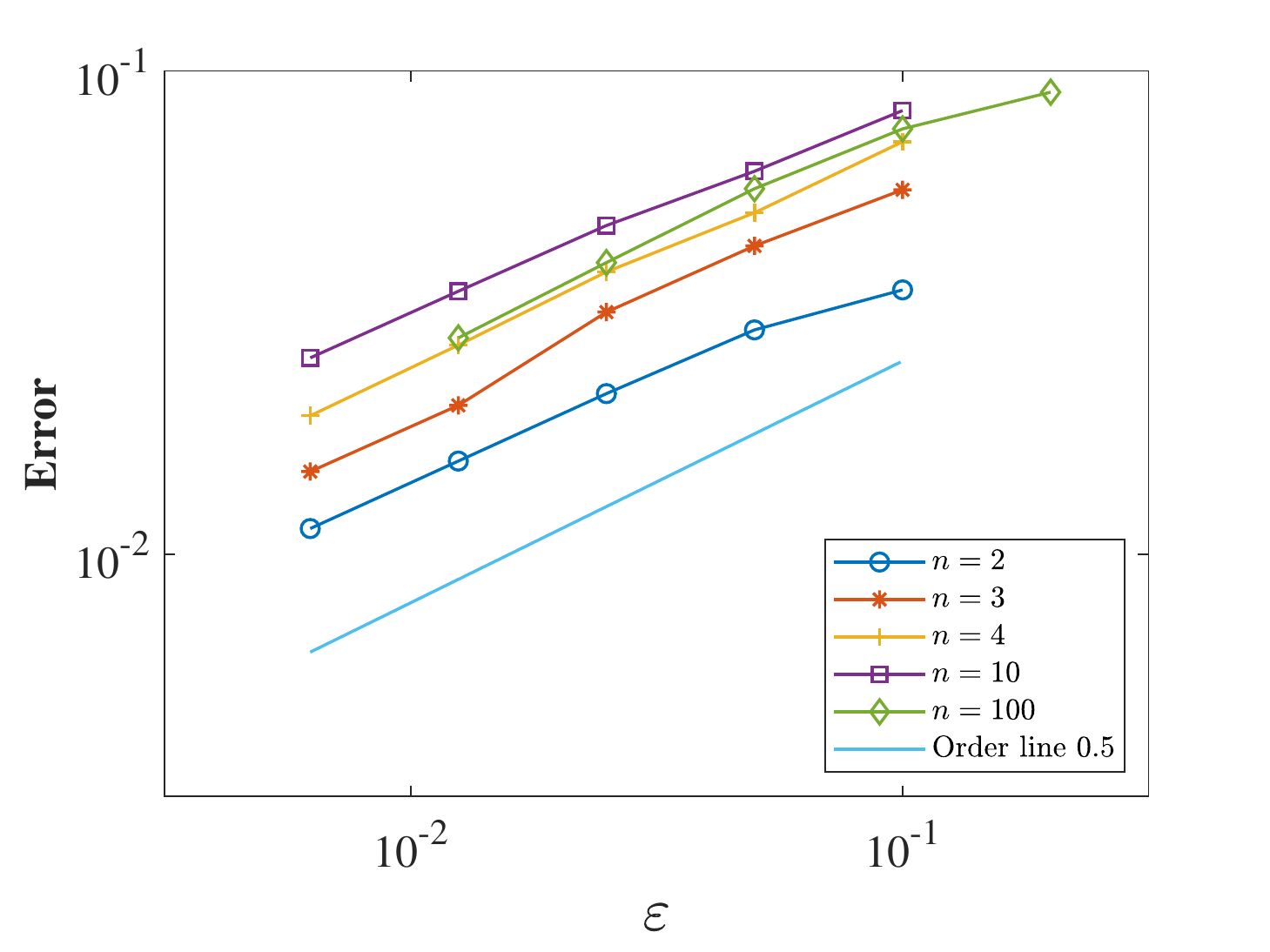}
\end{minipage}%
}%
\subfigure[$s=0.50$]{
\begin{minipage}[t]{0.5\linewidth}
\centering
\includegraphics[width=1\linewidth]{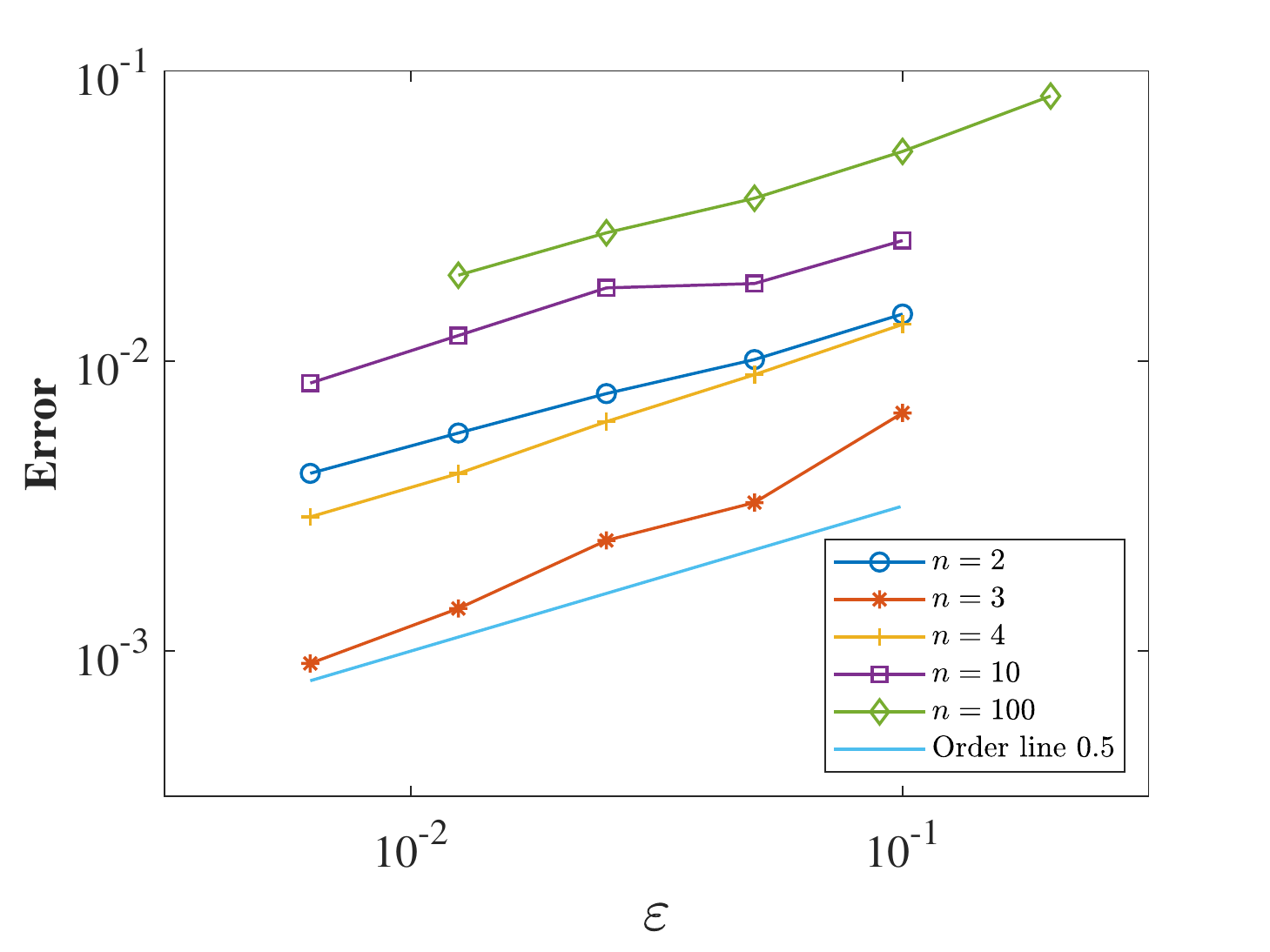}
\end{minipage}%
}%
  \\              %这个回车键很重要 \quad也可以
\subfigure[$s=0.75$]{
\begin{minipage}[t]{0.5\linewidth}
\centering
\includegraphics[width=1\linewidth]{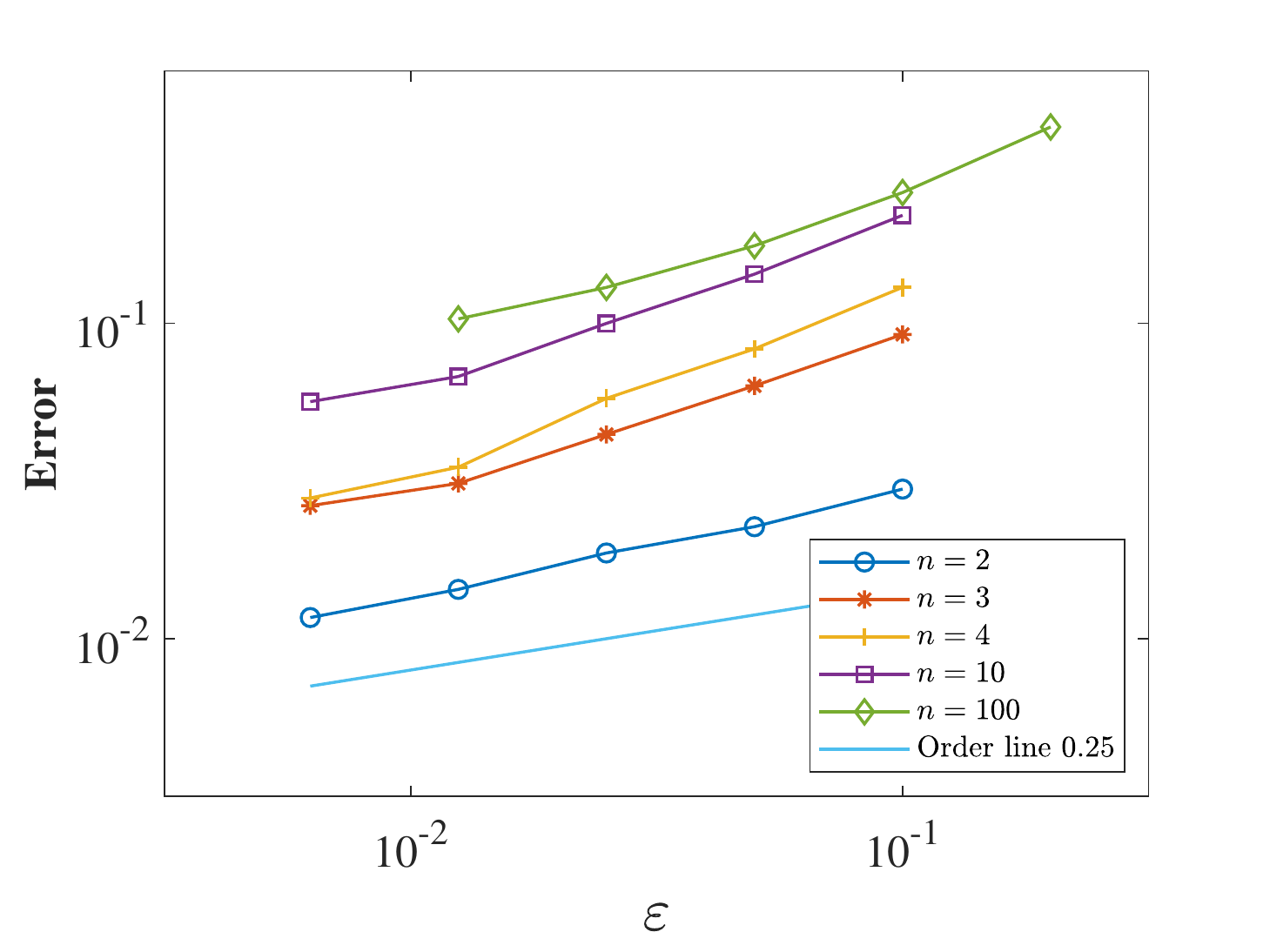}
\end{minipage}
}%
%\subfigure[domain $(-\infty,+\infty)$]{
%\begin{minipage}[t]{0.5\linewidth}
%\centering
%\includegraphics[width=1\linewidth]{example1tab1.pdf}
%\end{minipage}
%}%
\centering
\captionsetup{font={footnotesize}}
\caption{Numerical errors of the scheme (\ref{eq:approSolution}) for Example \ref{example1}.
It shows that  the convergent order is $2s\wedge (1-s)$.} \label{1fig1}
\end{figure}

\begin{example}\label{example2}
 Let $\mathbb{D}$ be a unit ball in $\mathbb{R}^{n}$ centered at the origin. Consider the
 following parabolic problem with fractional Laplacian,
 \begin{equation}\label{eq:example2}
 \left\{
  \begin{aligned}
  &\frac{\partial{u}}{\partial{t}}-(-\Delta)^{s}u+\sum_{j=0}^{n}
  b_{j}(t,\texttt{{\rm{x}}})\frac{\partial{u}}{\partial{x_{j}}}+
  c(t,\texttt{{\rm{x}}})u+f(t,\texttt{{\rm{x}}})=0,
  \quad (t,\texttt{{\rm{x}}})\in[0,T)\times \mathbb{D},
  \\[3pt]
  &u(T,\texttt{{\rm{x}}})=T(1-|\texttt{{\rm{x}}}|^2)^{1+s}+T,
  \quad \texttt{{\rm{x}}}\in {\mathbb{D}},
  \\[3pt]
  &u(t,\texttt{{\rm{x}}})=t, \quad (t,\texttt{{\rm{x}}})\in[0,T]
  \times \mathbb{R}^{n}\setminus\mathbb{D},
  \end{aligned}
  \right.
 \end{equation}
 where $n\geq 2$, $T=1$,
 \begin{equation}
  \begin{aligned}
  b_{j}(t,\texttt{{\rm{x}}})&=t\sin(x_{j}),\hspace{67mm}
  \\
  c(t,\texttt{{\rm{x}}})&=\frac{e^{t}}{1+e^{-|x|}},
  \end{aligned}
  \end{equation}
  \noindent and
 \begin{equation}
  \begin{aligned}
 &f(t,\texttt{{\rm{x}}})=
  \frac{2^{2s}\Gamma(2+s)\Gamma \left({n}/{2}+s\right)}{\Gamma\left({n}/{2}\right)}
  \left(1-\left(1+{2s}/{n}\right)|\texttt{{\rm{x}}}|^2\right)t
  \\
  &+2t(1+s)(1-|\texttt{{\rm{x}}}|^2)^{s}\sum_{i}^{n}t\sin(x_{i})x_{i}
  \\
  &-\frac{e^{t}[t(1-|\texttt{{\rm{x}}}|^2)^{1+s}+t]}{1+e^{-|x|}}
  -(1-|\texttt{{\rm{x}}}|^2)^{1+s}-1.
  \end{aligned}
 \end{equation}
 The exact solution to {\rm(\ref{eq:example2})} is
 \begin{equation}
  u(t,\texttt{{\rm{x}}})=
     t(1-|\texttt{{\rm{x}}}|^2)^{1+s}+t.
 \end{equation}
\end{example}
We still set $s=0.25$, $0.5$, $0.75$, the number of samples $N=10^4$,
$\varepsilon=\frac{1}{10}$, $\frac{1}{20}$,
$\frac{1}{40}$, $\frac{1}{80}$, $\frac{1}{160}$ for $2,\,3,\,4,\,10$ dimensional cases, and
$\varepsilon=\frac{1}{5}$, $\frac{1}{10}$, $\frac{1}{20}$,
$\frac{1}{40}$, $\frac{1}{80}$ for $100$ dimensional case. We evaluate $ u(t,\texttt{{\rm{x}}})$
with $t=0.5$, $\texttt{{\rm{x}}}=\frac{1}{n}\ast\textbf{ones}(n)$, $n=2,\,3,\,4,\,10,\,100$.
The numerical results given in Table \ref{exmp2tab1} shows the efficiency of
the method (\ref{eq:approSolution}) and coincides with the theoretical analysis.
\begin{table}[!htbp]
 \centering
 \caption{The error in simulation, the average number of steps, and the computational time (secs.)
  for Example \ref{example2} by scheme (\ref{eq:approSolution}) are given.}\label{exmp2tab1}
 \begin{tabular}{|c|ccc|ccc|ccc|}
   \hline
                       \multicolumn{10}{|c|}{$n=2$}\\
   \hline
   $\varepsilon$        &$s=0.25$    &step        &time
                        &$s=0.5$     &step        &time
                        &$s=0.75$    &step        &time \\
   \hline
                $\frac{1}{10}$    &4.844E-02   &1.40       &0.39
                                  &2.966E-02   &3.06       &0.68
                                  &3.635E-02   &5.69       &1.12       \\[3pt]
                $\frac{1}{20}$    &3.345E-02   &1.86       &0.46
                                  &1.699E-02   &5.65       &1.15
                                  &2.388E-02   &14.5       &2.75       \\[3pt]
                $\frac{1}{40}$    &2.367E-02   &2.48       &0.59
                                  &4.984E-03   &10.8       &2.06
                                  &1.639E-02   &38.1       &7.29       \\[3pt]
                $\frac{1}{80}$    &1.693E-02   &3.41       &0.75
                                  &1.348E-03   &21.2       &3.93
                                  &1.349E-02   &103        &19.1       \\[3pt]
                $\frac{1}{160}$   &1.192E-02   &4.69       &0.96
                                  &9.345E-03   &42.3       &7.90
                                  &1.110E-02   &281        &54.3       \\[3pt]
  \hline
                       \multicolumn{10}{|c|}{$n=3$}\\
   \hline
   $\varepsilon$        &$s=0.25$    &step        &time
                        &$s=0.5$     &step        &time
                        &$s=0.75$    &step        &time \\
   \hline
                $\frac{1}{10}$    &1.045E-02   &1.67       &0.75
                                  &3.756E-03   &3.88       &1.50
                                  &1.230E-01   &7.74       &2.59       \\[3pt]
                $\frac{1}{20}$    &1.382E-02   &2.18       &0.90
                                  &1.837E-02   &7.31       &2.48
                                  &7.749E-02   &20.0       &6.37       \\[3pt]
                $\frac{1}{40}$    &1.880E-02   &2.98       &1.21
                                  &5.084E-03   &14.2       &4.70
                                  &3.775E-02   &53.3       &16.8       \\[3pt]
                $\frac{1}{80}$    &1.379E-02   &4.13       &1.64
                                  &5.084E-03   &28.3       &9.71
                                  &2.674E-02   &145        &45.2       \\[3pt]
                $\frac{1}{160}$   &1.025E-02   &5.63       &2.15
                                  &3.438E-03   &55.7       &18.8
                                  &2.253E-02   &393        &125       \\[3pt]
  \hline
                      \multicolumn{10}{|c|}{$n=4$}\\
   \hline
   $\varepsilon$        &$s=0.25$    &step        &time
                        &$s=0.5$     &step        &time
                        &$s=0.75$    &step        &time \\
   \hline
                $\frac{1}{10}$    &9.976E-03   &1.80       &1.71
                                  &3.682E-02   &4.37       &3.89
                                  &1.551E-01   &8.98       &7.98       \\[3pt]
                $\frac{1}{20}$    &1.905E-02   &2.38       &2.20
                                  &1.703E-02   &8.44       &7.03
                                  &9.613E-02   &23.2       &19.7       \\[3pt]
                $\frac{1}{40}$    &1.240E-02   &3.23       &2.98
                                  &9.508E-03   &16.4       &14.0
                                  &6.017E-02   &61.5       &61.7       \\[3pt]
                $\frac{1}{80}$    &8.541E-03   &4.42       &4.06
                                  &6.951E-03   &32.2       &26.8
                                  &3.844E-02   &167        &145       \\[3pt]
                $\frac{1}{160}$   &6.875E-03   &6.11       &5.46
                                  &4.833E-03   &64.4       &53.9
                                  &2.759E-02   &455        &394       \\[3pt]
  \hline
                      \multicolumn{10}{|c|}{$n=10$}\\
   \hline
   $\varepsilon$        &$s=0.25$    &step        &time
                        &$s=0.5$     &step        &time
                        &$s=0.75$    &step        &time \\
   \hline
                $\frac{1}{10}$    &1.364E-02   &2.05       &3.56
                                  &4.021E-02   &5.50       &9.14
                                  &2.227E-01   &11.3       &18.7       \\[3pt]
                $\frac{1}{20}$    &1.050E-02   &2.76       &4.59
                                  &2.478E-02   &10.7       &18.7
                                  &1.441E-01   &28.9       &46.7       \\[3pt]
                $\frac{1}{40}$    &7.874E-03   &3.80       &7.04
                                  &1.374E-02   &21.0       &35.4
                                  &9.340E-02   &77.2       &131       \\[3pt]
                $\frac{1}{80}$    &4.314E-03   &5.19       &8.57
                                  &8.585E-03   &41.9       &69.9
                                  &7.153E-02   &207        &344       \\[3pt]
                $\frac{1}{160}$   &2.094E-03   &7.10       &11.9
                                  &6.186E-03   &82.9       &139
                                  &5.607E-02   &563        &931       \\[3pt]
  \hline
                     \multicolumn{10}{|c|}{$n=100$}\\
   \hline
   $\varepsilon$        &$s=0.25$    &step        &time
                        &$s=0.5$     &step        &time
                        &$s=0.75$    &step        &time \\
   \hline
                $\frac{1}{5}$     &2.543E-02   &1.79       &31.3
                                  &8.199E-02   &3.44       &60.5
                                  &4.181E-01   &5.38       &91.5       \\[3pt]
                $\frac{1}{10}$    &1.774E-02   &2.49       &45.2
                                  &5.281E-02   &6.56       &111
                                  &2.554E-01   &12.8       &216       \\[3pt]
                $\frac{1}{20}$    &1.264E-02   &3.43       &58.5
                                  &3.641E-02   &12.8       &219
                                  &1.784E-01   &33.0       &1491       \\[3pt]
                $\frac{1}{40}$    &7.963E-03   &4.82       &82.3
                                  &2.768E-02   &24.8       &433
                                  &1.268E-01   &86.7       &1490       \\[3pt]
                $\frac{1}{80}$    &5.403E-03   &6.69       &115
                                  &1.977E-02   &49.2       &827
                                  &9.952E-02   &234        &4039       \\[3pt]
  \hline
 \end{tabular}
\end{table}

\begin{example}\label{example3}
 Let $\mathbb{D}$ be a unit ball in $\mathbb{R}^{n}$ centered at the origin. Consider the
 following fractional heat equation,
 \begin{equation}\label{eq:example3}
 \left\{
  \begin{aligned}
  &\frac{\partial{u_{i}}}{\partial{t}}-(-\Delta)^{s}u_{i}
  +f_{i}(t,\texttt{{\rm{x}}})=0,
  \quad &&(t,\texttt{{\rm{x}}})\in[0,T)\times \mathbb{D},
  \\[3pt]
  &u_{i}(T,\texttt{{\rm{x}}})=T(1-|\texttt{{\rm{x}}}|^2)^{1+i+s},
  \quad &&\texttt{{\rm{x}}}\in {\mathbb{D}},
  \\[3pt]
  &u_{i}(t,\texttt{{\rm{x}}})=0, \quad &&(t,\texttt{{\rm{x}}})\in[0,T]
  \times \mathbb{R}^{n}\setminus\mathbb{D},
  \end{aligned}
  \right.
 \end{equation}
 where $i=1,\,2$, $n\geq 2$, and
 \begin{equation}\label{eq:rhs4example3}
  \begin{aligned}
  f_{i}(t,\texttt{{\rm{x}}})=-(1-|\texttt{{\rm{x}}}|^2)^{2+s}
  -\frac{C(n,s)B(-s,i+s+2)\pi^{\frac{n}{2}}}{\Gamma\left(\frac{n}{2}\right)}
  {_2}F_{1}\left(s+\frac{n}{2},-i-1;\frac{n}{2};|\texttt{{\rm{x}}}|^2\right)t.
   \end{aligned}
  \end{equation}
 Here $C(n,s)$ is given in equation {\rm (\ref{eq:coeffient})}
 and ${_2F_{1}(a,b;c;z)}$ is the hypergeometric function.
 The exact solution to {\rm(\ref{eq:example3})} is
 \begin{equation}
  u_{1}(t,\texttt{{\rm{x}}})=t(1-|\texttt{{\rm{x}}}|^2)^{2+s}, \quad i=1,
 \end{equation}
 and
 \begin{equation}
  u_{2}(t,\texttt{{\rm{x}}})=t(1-|\texttt{{\rm{x}}}|^2)^{3+s}, \quad i=2.
 \end{equation}
\end{example}
 We set $s=0.25$, $0.5$, $0.75$, $\varepsilon=\frac{1}{4}$, $\frac{1}{8}$,
$\frac{1}{16}$, $\frac{1}{32}$, $\frac{1}{64}$ for $2,\,3,\,4,\,10$ dimensional cases and
$\varepsilon=\frac{1}{2}$, $\frac{1}{4}$, $\frac{1}{8}$,
$\frac{1}{16}$, $\frac{1}{32}$ for $100$ dimensional case.
We evaluate $ u(t,\texttt{{\rm{x}}})$
with $t=0.5$, $\texttt{{\rm{x}}}=\frac{1}{n}\ast\textbf{ones}(n)$, $n=2,\,3, \,4,\, 10,\,100$.
To avoid statistical error, we set $N=4\times10^4$
for the case of $s=0.75,\, \varepsilon=\frac{1}{32}$ and $N=3\times10^5$
for the case of $s=0.75,\, \varepsilon=\frac{1}{64}$. In other cases, $N$ is still set by $10^4$.
Tables \ref{exmp3tab1} and \ref{exmp3tab2} give numerical errors
$\Big|u(t,\texttt{{\rm{x}}})-\frac{1}{N}
\sum\limits_{j=1}^{N}u(T\wedge\overline{\tau}_{t,\texttt{{\rm{x}}}},
   \overline{\texttt{{\rm{X}}}}_{T\wedge\overline{\tau}_{t,\texttt{{\rm{x}}}}}^{j})
   \overline{Y}_{T\wedge\overline{\tau}_{t,\texttt{{\rm{x}}}}}^{j}
   +\overline{Z}_{T\wedge\overline{\tau}_{t,\texttt{{\rm{x}}}}}^{j}\Big|$
with $f_{1}(t,\texttt{{\rm{x}}})$ and $f_{2}(t,\texttt{{\rm{x}}})$ in (\ref{eq:rhs4example3})
by higher-order jump-adapted scheme (\ref{eq:approSolution2}), respectively.
Compared with steps and computational time in Table \ref{exmp3tab1},
the corresponding results in Table \ref{exmp3tab2} almost shows no evident differences.
This is due to the fact that being given different $f_{i}(t,\texttt{{\rm{x}}})$ in equation
(\ref{eq:example3}) with the same index $s$  will not change the
trajectory of  L\'{e}vy processes. Figures (a), (b), (c) in Figure \ref{2fig1}
and figures (d), (e), (f) in the same figure show the convergent order of the high-order
scheme (\ref{eq:approSolution2}) with different functions
$f_{i}(t,\texttt{{\rm{x}}}),\,i=1,\,2$ in equation (\ref{eq:example3}),
respectively. When $s=0.25,0.50$, figures (a), (b), (d), and (e) demonstrate
the convergent order for equation (\ref{eq:example3}) with $f_{1}(t,\texttt{{\rm{x}}})$
and $f_{2}(t,\texttt{{\rm{x}}})$ are the same, which coincides with the theoretical analysis.
Compared with figure (c) for $s=0.75$, figure (f) shows higher convergent order due to the higher
regularity of the $u(t,\texttt{{\rm{x}}})$.

\begin{table}[!htbp]
 \centering
 \caption{The error in simulation, the average number of steps, and the computational time (secs.)
  for $i=1$ of Example \ref{example3} by scheme (\ref{eq:approSolution2}) are given.}\label{exmp3tab1}
 \begin{tabular}{|c|ccc|ccc|ccc|}
   \hline
                       \multicolumn{10}{|c|}{$n=2$}\\
   \hline
   $\varepsilon$        &$s=0.25$    &step        &time
                        &$s=0.5$     &step        &time
                        &$s=0.75$    &step        &time \\
   \hline
                $\frac{1}{4}$     &1.620E-02   &1.26       &0.60
                                  &1.248E-02   &1.55       &0.90
                                  &2.167E-02   &1.54       &0.93       \\[3pt]
                $\frac{1}{8}$     &1.484E-02   &1.57       &0.76
                                  &1.122E-02   &2.78       &1.26
                                  &5.954E-03   &3.79       &1.79       \\[3pt]
                $\frac{1}{16}$    &1.060E-02   &2.02       &0.92
                                  &7.045E-03   &5.18       &2.42
                                  &4.236E-03   &10.2       &4.78       \\[3pt]
                $\frac{1}{32}$    &7.483E-03   &2.66       &1.20
                                  &2.885E-03   &10.0       &4.41
                                  &2.786E-03   &27.8       &11.8       \\[3pt]
                $\frac{1}{64}$    &5.493E-03   &3.54       &1.60
                                  &1.528E-03   &19.7       &8.23
                                  &2.090E-03   &76.4       &32.2       \\[3pt]
  \hline
                       \multicolumn{10}{|c|}{$n=3$}\\
   \hline
   $\varepsilon$        &$s=0.25$    &step        &time
                        &$s=0.5$     &step        &time
                        &$s=0.75$    &step        &time \\
   \hline
                $\frac{1}{4}$     &4.228E-02   &1.34       &0.79
                                  &1.447E-02   &1.86       &1.04
                                  &2.280E-02   &1.93       &1.29       \\[3pt]
                $\frac{1}{8}$     &3.200E-02   &1.72       &1.14
                                  &7.239E-03   &3.36       &1.96
                                  &1.114E-02   &5.04       &2.70       \\[3pt]
                $\frac{1}{16}$    &2.166E-02   &2.22       &1.34
                                  &4.424E-03   &6.48       &3.25
                                  &5.062E-02   &13.5       &7.06       \\[3pt]
                $\frac{1}{32}$    &1.535E-02   &2.93       &1.54
                                  &2.255E-03   &12.6       &6.26
                                  &3.725E-02   &38.0       &18.9       \\[3pt]
                $\frac{1}{64}$    &1.085E-02   &3.97       &2.12
                                  &1.166E-03   &24.6       &12.1
                                  &2.475E-02   &105        &52.0       \\[3pt]
  \hline
                      \multicolumn{10}{|c|}{$n=4$}\\
   \hline
   $\varepsilon$        &$s=0.25$    &step        &time
                        &$s=0.5$     &step        &time
                        &$s=0.75$    &step        &time \\
   \hline
                $\frac{1}{4}$     &5.372E-02   &1.39       &1.20
                                  &1.128E-02   &2.02       &2.04
                                  &3.068E-02   &2.12       &1.93       \\[3pt]
                $\frac{1}{8}$     &3.994E-02   &1.78       &1.50
                                  &6.274E-03   &3.76       &3.14
                                  &1.481E-02   &5.62       &4.65       \\[3pt]
                $\frac{1}{16}$    &2.420E-02   &2.31       &1.90
                                  &1.870E-03   &7.24       &5.92
                                  &2.992E-03   &15.1       &12.3       \\[3pt]
                $\frac{1}{32}$    &1.615E-02   &3.11       &2.54
                                  &1.165E-03   &14.2       &11.3
                                  &2.309E-02   &43.2       &37.9       \\[3pt]
                $\frac{1}{64}$    &1.070E-03   &4.18       &3.42
                                  &5.535E-04   &27.8       &22.6
                                  &1.729E-03   &121        &98.1       \\[3pt]
  \hline
                      \multicolumn{10}{|c|}{$n=10$}\\
   \hline
   $\varepsilon$        &$s=0.25$    &step        &time
                        &$s=0.5$     &step        &time
                        &$s=0.75$    &step        &time \\
   \hline
                $\frac{1}{4}$     &7.687E-02   &1.49       &3.45
                                  &9.497E-02   &2.39       &5.14
                                  &1.079E-01   &2.36       &4.82       \\[3pt]
                $\frac{1}{8}$     &4.493E-02   &1.97       &3.78
                                  &3.861E-03   &4.67       &9.03
                                  &3.802E-02   &6.51       &12.9       \\[3pt]
                $\frac{1}{16}$    &1.723E-02   &2.61       &4.96
                                  &9.114E-03   &9.20       &17.5
                                  &2.188E-02   &18.3       &34.3       \\[3pt]
                $\frac{1}{32}$    &1.305E-03   &3.54       &6.81
                                  &4.943E-03   &18.1       &37.8
                                  &1.577E-02   &51.4       &95.9       \\[3pt]
                $\frac{1}{64}$    &9.910E-03   &4.90       &9.76
                                  &2.287E-03   &35.6       &66.3
                                  &1.154E-02   &143        &267       \\[3pt]
  \hline
                      \multicolumn{10}{|c|}{$n=100$}\\
   \hline
   $\varepsilon$        &$s=0.25$    &step        &time
                        &$s=0.5$     &step        &time
                        &$s=0.75$    &step        &time \\
   \hline
                $\frac{1}{2}$     &1.110E-01   &1.26       &21.6
                                  &1.996E-01   &1.28       &21.6
                                  &5.333E-01   &0.85       &16.5       \\[3pt]
                $\frac{1}{4}$     &1.048E-01   &1.70       &28.5
                                  &4.635E-02   &2.56       &44.6
                                  &2.233E-01   &2.45       &41.1       \\[3pt]
                $\frac{1}{8}$     &7.995E-02   &2.34       &40.8
                                  &2.198E-02   &5.18       &87.1
                                  &6.857E-02   &6.92       &115       \\[3pt]
                $\frac{1}{16}$    &5.291E-02   &3.21       &56.2
                                  &1.815E-02   &10.2       &169
                                  &3.631E-02   &19.6       &338       \\[3pt]
                $\frac{1}{32}$    &3.729E-02   &4.49       &77.7
                                  &1.242E-02   &20.1       &350
                                  &2.395E-02   &54.0       &922       \\[3pt]
  \hline
 \end{tabular}
\end{table}

\begin{table}[!htbp]
 \centering
 \caption{The error in simulation, the average number of steps, and the computational time (secs.)
  for $i=2$ of Example \ref{example3} by scheme (\ref{eq:approSolution2}) are given.}\label{exmp3tab2}
 \begin{tabular}{|c|ccc|ccc|ccc|}
   \hline
                       \multicolumn{10}{|c|}{$n=2$}\\
   \hline
   $\varepsilon$        &$s=0.25$    &step        &time
                        &$s=0.5$     &step        &time
                        &$s=0.75$    &step        &time \\
   \hline
                $\frac{1}{4}$     &3.680E-03   &1.26       &0.67
                                  &7.855E-03   &1.54       &0.78
                                  &3.315E-02   &1.55       &0.90       \\[3pt]
                $\frac{1}{8}$     &3.143E-03   &1.58       &0.82
                                  &6.639E-03   &2.79       &1.78
                                  &1.182E-02   &3.82       &1.81       \\[3pt]
                $\frac{1}{16}$    &2.219E-03   &2.03       &0.92
                                  &3.624E-03   &5.22       &2.60
                                  &4.429E-03   &10.2       &4.65       \\[3pt]
                $\frac{1}{32}$    &1.639E-03   &2.66       &1.28
                                  &2.364E-03   &10.0       &4.68
                                  &1.483E-03   &27.8       &49.7       \\[3pt]
                $\frac{1}{64}$    &1.203E-03   &3.54       &1.56
                                  &1.159E-03   &19.6       &8.17
                                  &5.387E-04   &77.2       &984       \\[3pt]
  \hline
                       \multicolumn{10}{|c|}{$n=3$}\\
   \hline
   $\varepsilon$        &$s=0.25$    &step        &time
                        &$s=0.5$     &step        &time
                        &$s=0.75$    &step        &time \\
   \hline
                $\frac{1}{4}$     &2.620E-02   &1.35       &0.78
                                  &6.874E-03   &1.86       &1.09
                                  &3.593E-03   &1.89       &1.21       \\[3pt]
                $\frac{1}{8}$     &2.263E-02   &1.69       &1.03
                                  &1.400E-03   &3.40       &1.84
                                  &6.062E-03   &5.04       &2.70       \\[3pt]
                $\frac{1}{16}$    &1.532E-02   &2.20       &1.18
                                  &6.189E-04   &6.50       &3.51
                                  &2.121E-03   &13.7       &7.37       \\[3pt]
                $\frac{1}{32}$    &1.057E-02   &2.92       &1.56
                                  &2.450E-04   &12.5       &6.32
                                  &6.192E-04   &37.7       &74.0       \\[3pt]
                $\frac{1}{64}$    &7.335E-03   &3.92       &2.04
                                  &9.949E-05   &24.8       &12.4
                                  &2.255E-04   &105        &1549       \\[3pt]
  \hline
                      \multicolumn{10}{|c|}{$n=4$}\\
   \hline
   $\varepsilon$        &$s=0.25$    &step        &time
                        &$s=0.5$     &step        &time
                        &$s=0.75$    &step        &time \\
   \hline
                $\frac{1}{4}$     &4.448E-02   &1.39       &1.23
                                  &5.686E-03   &1.99       &1.75
                                  &3.241E-02   &2.12       &1.96       \\[3pt]
                $\frac{1}{8}$     &3.190E-02   &1.78       &1.60
                                  &2.909E-03   &3.77       &3.56
                                  &1.197E-02   &5.69       &4.68       \\[3pt]
                $\frac{1}{16}$    &1.800E-02   &2.30       &1.92
                                  &1.392E-03   &7.22       &6.28
                                  &3.720E-03   &15.3       &12.7       \\[3pt]
                $\frac{1}{32}$    &1.244E-02   &3.08       &2.56
                                  &7.193E-04   &14.1       &11.2
                                  &8.106E-04   &43.0       &136       \\[3pt]
                $\frac{1}{64}$    &8.068E-03   &4.19       &3.43
                                  &3.245E-04   &28.0       &22.6
                                  &2.650E-04   &120        &2913       \\[3pt]
  \hline
                      \multicolumn{10}{|c|}{$n=10$}\\
   \hline
   $\varepsilon$        &$s=0.25$    &step        &time
                        &$s=0.5$     &step        &time
                        &$s=0.75$    &step        &time \\
   \hline
                $\frac{1}{2}$     &1.087E-01   &1.18       &2.46
                                  &1.035E-01   &1.25       &2.57
                                  &1.239E-01   &0.81       &2.25       \\[3pt]
                $\frac{1}{4}$     &7.280E-02   &1.50       &3.09
                                  &1.622E-02   &2.39       &4.76
                                  &1.569E-01   &2.33       &4.76       \\[3pt]
                $\frac{1}{8}$     &4.345E-02   &1.96       &4.06
                                  &8.147E-03   &4.62       &8.89
                                  &5.641E-03   &6.54       &13.0       \\[3pt]
                $\frac{1}{16}$    &2.120E-02   &2.63       &5.25
                                  &5.924E-03   &9.03       &18.0
                                  &2.259E-02   &18.3       &38.5       \\[3pt]
                $\frac{1}{32}$    &6.903E-03   &3.55       &6.82
                                  &6.192E-03   &18.1       &34.4
                                  &1.600E-02   &50.7       &384       \\[3pt]
                $\frac{1}{64}$    &4.523E-03   &4.80       &9.32
                                  &2.287E-03   &35.7       &67.6
                                  &7.617E-02   &142        &8247       \\[3pt]
  \hline
                      \multicolumn{10}{|c|}{$n=100$}\\
   \hline
   $\varepsilon$        &$s=0.25$    &step        &time
                        &$s=0.5$     &step        &time
                        &$s=0.75$    &step        &time \\
   \hline
                $\frac{1}{2}$     &9.560E-01   &1.25       &20.7
                                  &2.831E-01   &1.29       &21.0
                                  &7.816E-01   &0.84       &15.5       \\[3pt]
                $\frac{1}{4}$     &9.787E-02   &1.68       &28.8
                                  &7.095E-02   &2.57       &43.5
                                  &7.816E-01   &2.42       &43.0       \\[3pt]
                $\frac{1}{8}$     &8.131E-02   &2.32       &40.7
                                  &3.068E-02   &5.17       &87.0
                                  &8.823E-02   &6.91       &119       \\[3pt]
                $\frac{1}{16}$    &5.694E-02   &3.21       &53.2
                                  &2.482E-02   &10.2       &181
                                  &4.192E-02   &19.6       &327       \\[3pt]
                $\frac{1}{32}$    &3.867E-02   &4.41       &73.6
                                  &1.373E-02   &20.0       &338
                                  &2.650E-02   &54.6       &3679       \\[3pt]
  \hline
 \end{tabular}
\end{table}

\begin{figure}[htbp]
\centering
\subfigure[$i=1,\,s=0.25$]{
\begin{minipage}[t]{0.5\linewidth}
\centering
\includegraphics[width=1\linewidth]{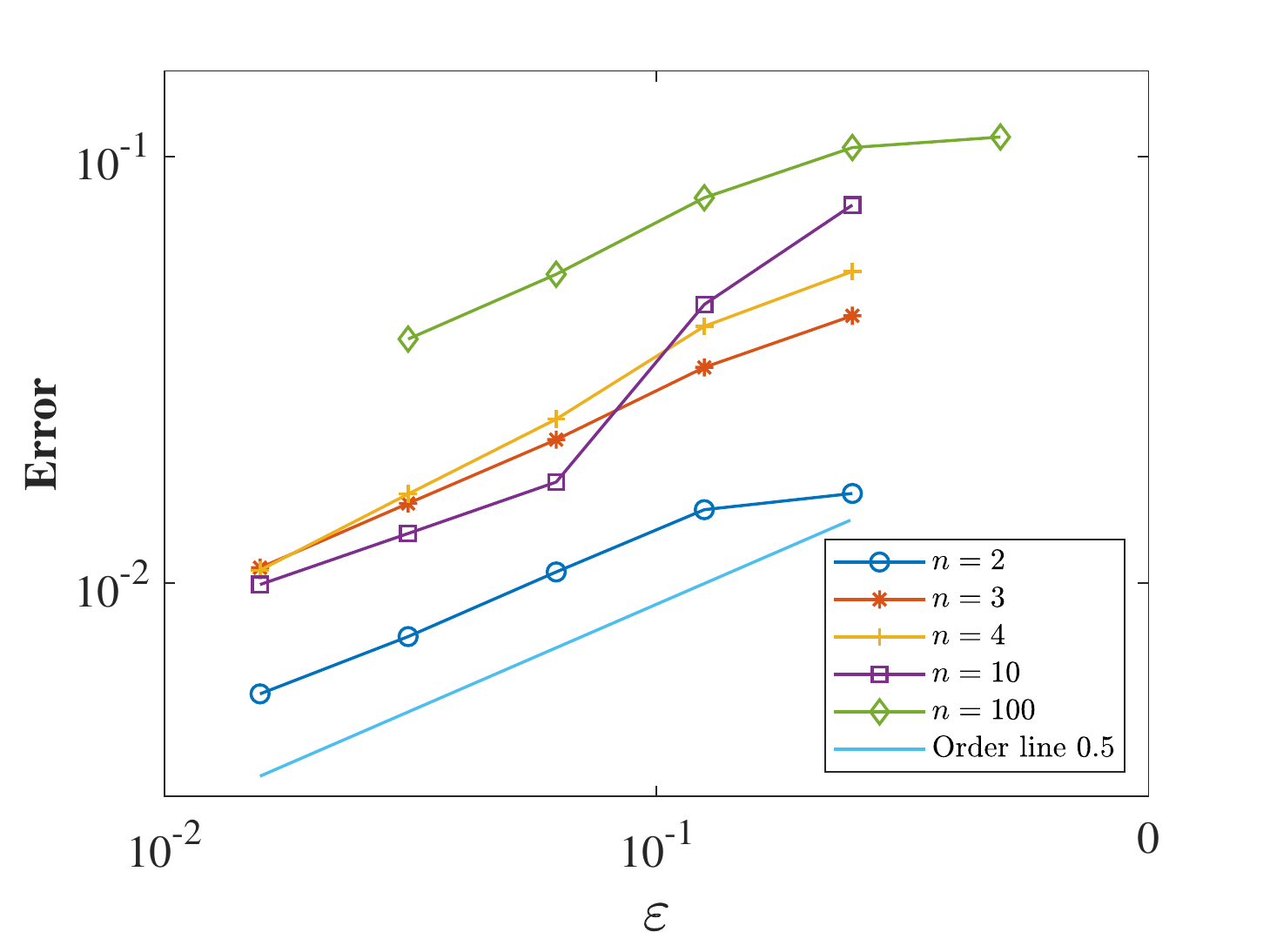}
\end{minipage}%
}%
\subfigure[$i=1,\,s=0.50$]{
\begin{minipage}[t]{0.5\linewidth}
\centering
\includegraphics[width=1\linewidth]{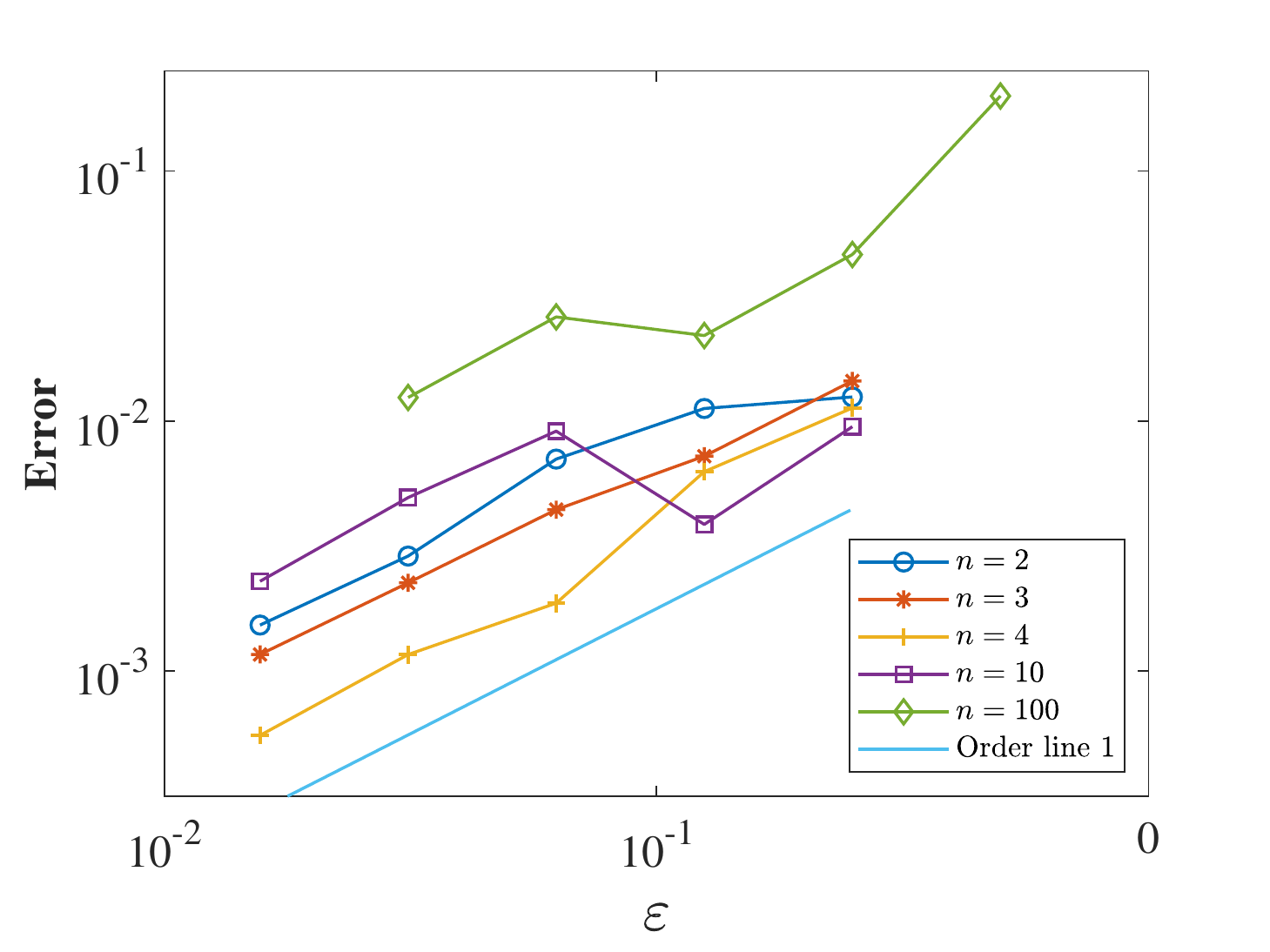}
\end{minipage}%
}%
  \\              %这个回车键很重要 \quad也可以
\subfigure[$i=1,\,s=0.75$]{
\begin{minipage}[t]{0.5\linewidth}
\centering
\includegraphics[width=1\linewidth]{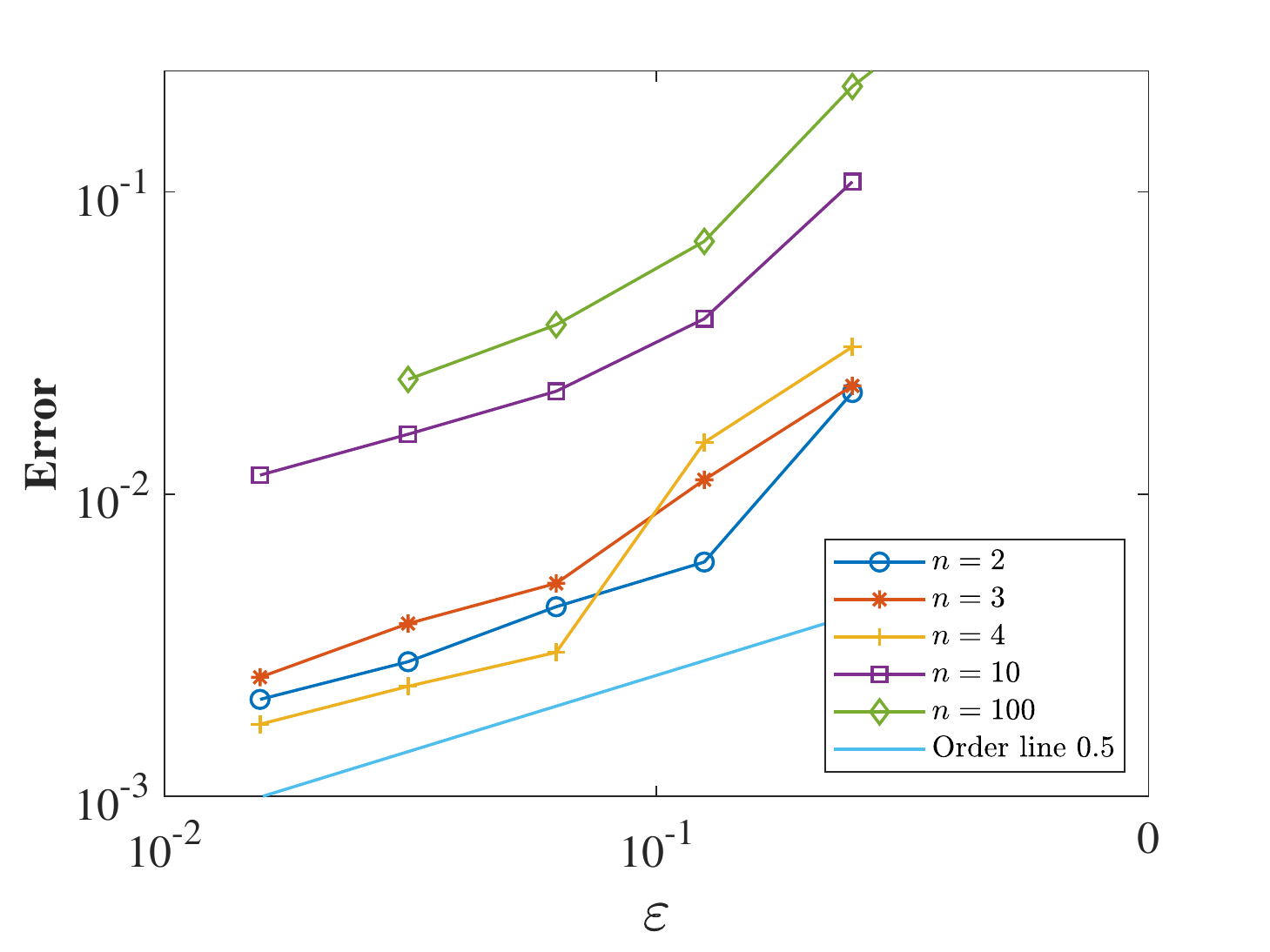}
\end{minipage}
}%
\subfigure[$i=2,\,s=0.25$]{
\begin{minipage}[t]{0.5\linewidth}
\centering
\includegraphics[width=1\linewidth]{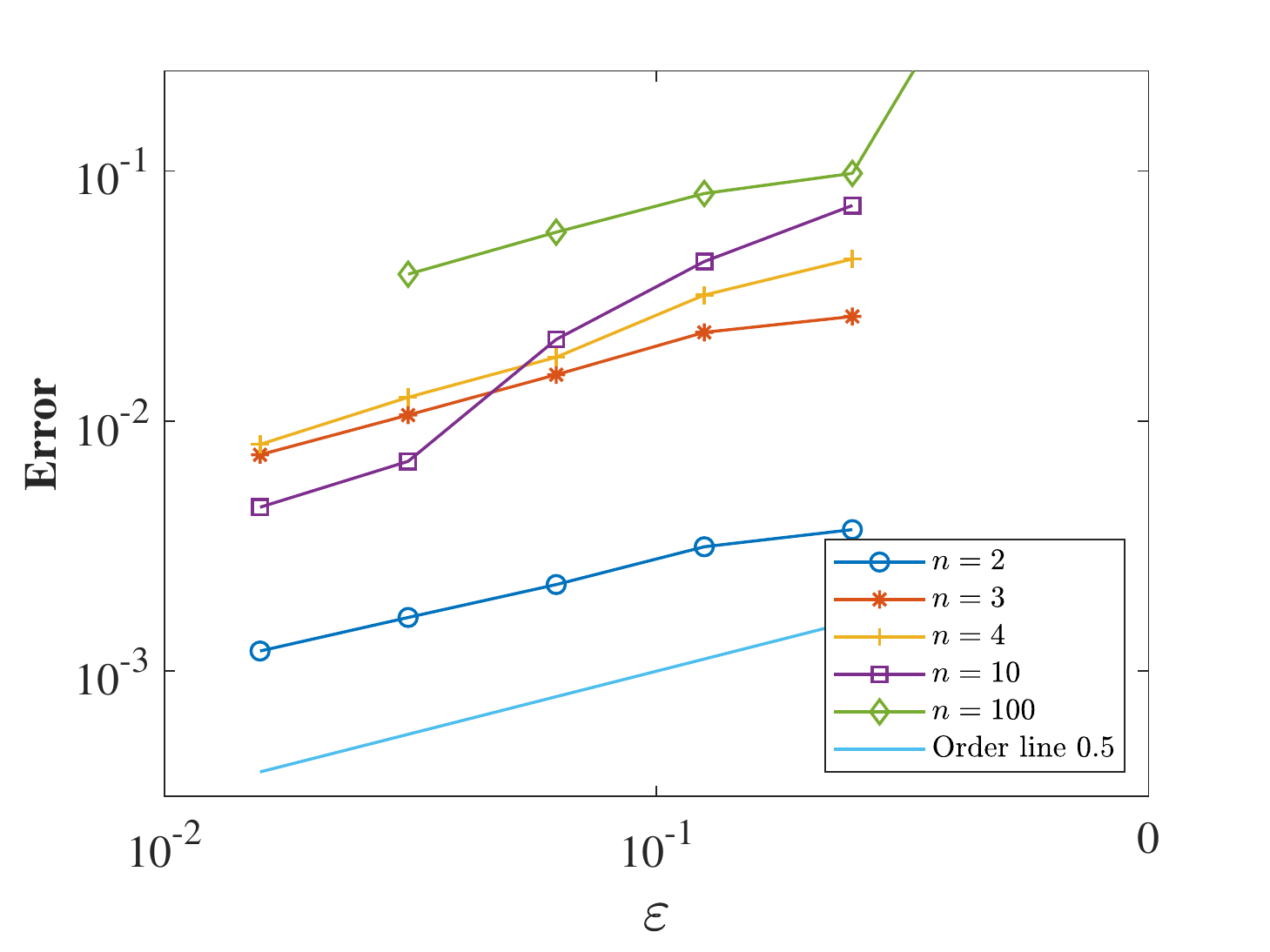}
\end{minipage}
}
\\
\subfigure[$i=2,\,s=0.5$]{
\begin{minipage}[t]{0.5\linewidth}
\centering
\includegraphics[width=1\linewidth]{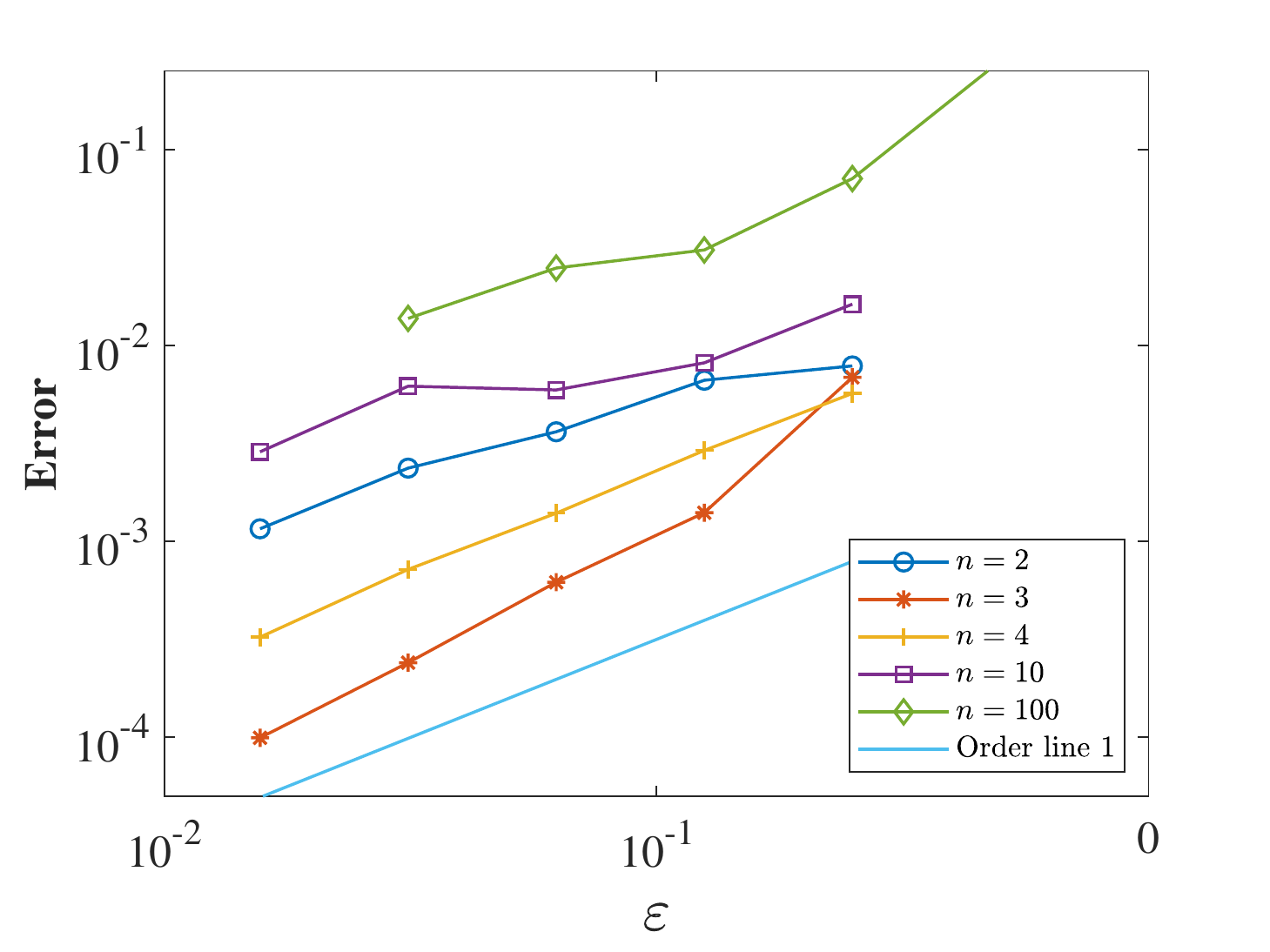}
\end{minipage}
}%
\subfigure[$i=2,\,s=0.75$]{
\begin{minipage}[t]{0.5\linewidth}
\centering
\includegraphics[width=1\linewidth]{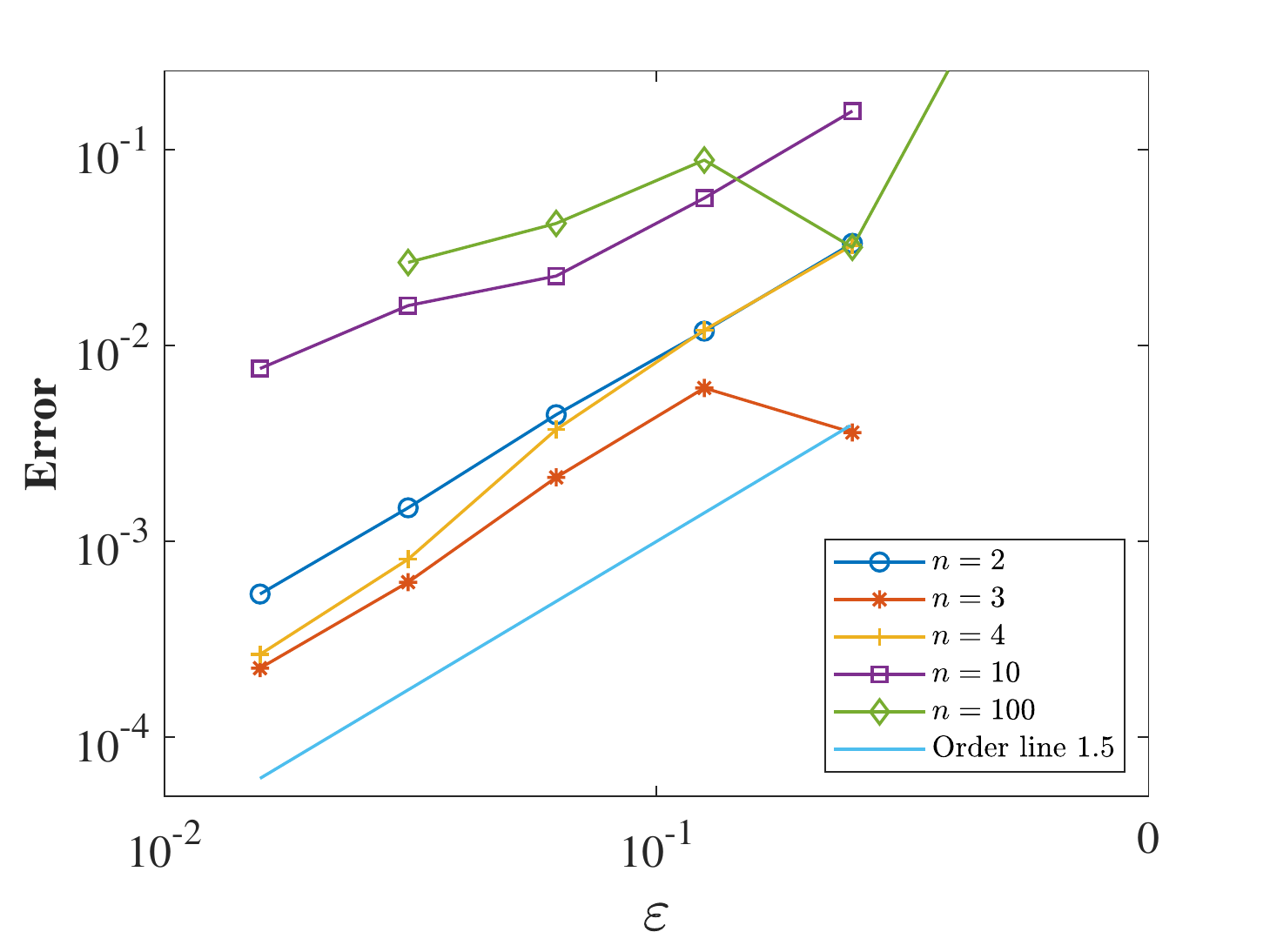}
\end{minipage}
}%
\centering
\captionsetup{font={footnotesize}}
\caption{Numerical errors of the scheme (\ref{eq:approSolution2}) for  Example \ref{example3}.
It shows that if $u(t,\texttt{{\rm{x}}})\in C^{1,\beta}([0,T]\times\mathbb{R}^{n}),\,\beta\in[2,3]$,
the convergent order is $2s\wedge (\lfloor\beta\rfloor-2s)$.} \label{2fig1}
\end{figure}
\section{Conclusion}
 We propose Monte Carlo method to solve the Dirichlet problem for the
 parabolic equation with fractional Laplacian (\ref{eq:terminalboundaryproblem}).
 First, we give the probabilistic representation of the solution to parabolic equation
 (\ref{eq:terminalboundaryproblem}) which is related to stochastic differential
 equations driven by symmetric stable L\'{e}vy process with jump.
 Then we obtain two jump-adapted schemes in which time discretization is
 based on jump times of L\'{e}vy process to approximate L\'{e}vy driven stochastic
 differential equations. The first scheme removes small jumps of the $2s$-symmetric
 stable process while the second scheme replaces small jumps by the simple process
 $\sigma_{\varepsilon}\sqrt{t}\xi$. Based on these two schemes, we give two numerical
 algorithms to solve parabolic equations (\ref{eq:terminalboundaryproblem}).
 Convergence theorems for both schemes are proved.
 The convergence order of the second scheme (\ref{eq:approSolution2}) is higher than the
 first one (\ref{eq:approSolution}), while the second scheme requires better
 regularity of the solution. Numerical experiments verify the theoretical analysis and
 show the efficiency of the proposed algorithms.

\section*{Acknowledgements} The authors wish to thank EiC Prof Karl Sabelfeld for his invaluable
suggestions and making some references available.

{
\small

}

\end{document}